        \tikzset{%
        fwdrxn/.style={very thick, arrows={-Stealth[length=5pt,width=5pt]}},
        revrxn/.style={very thick, arrows={-Stealth[length=5pt,width=5pt,left]}},
        newt/.style={turq, opacity=0.15}
        }
        \tikzset{near start abs-right/.style={xshift=1cm}}
        \tikzset{near start abs-left/.style={xshift=-3.5cm}}
        \tikzset{near start abs-up/.style={yshift=1.5cm}}
        \tikzset{near start abs-down/.style={yshift=-1cm}}
\newtheorem{theorem}{Theorem}[section]
\newtheorem{lemma}[theorem]{Lemma}
\newtheorem{proposition}[theorem]{Proposition}
\newtheorem{remark}[theorem]{Remark}
\newtheorem{remarks}[theorem]{Remarks}
\newtheorem{example}[theorem]{Example}
\theoremstyle{definition}
\newtheorem{definition}[theorem]{Definition}
\newcommand{\ignore}[1]{}
\newcommand{\END}{\hfill\mbox{\raggedright$\Diamond$}}
\newcommand{\etal}{\textrm{et al.}}
\newcommand{\Matrix}[1]{\ensuremath{\left[\begin{array}{rrrrrrrrrrrrrrrrrr} #1 \end{array}\right]}}
\newcommand{\Matrixc}[1]{\ensuremath{\left[\begin{array}{cccccccccccc|ccc} #1 \end{array}\right]}}
\numberwithin{equation}{section}
\renewcommand{\epsilon}{\varepsilon}
\newcommand{\R}{{\mathbf R}}
\renewcommand{\AA}{{\mathcal A}}
\newcommand{\CC}{{\mathcal C}}
\newcommand{\EE}{{\mathcal E}}
\newcommand{\GG}{{\mathcal G}}
\newcommand{\II}{{\mathcal I}}
\newcommand{\KK}{{\mathcal K}}
\newcommand{\NN}{{\mathcal N}}
\newcommand{\PP}{{\mathcal P}}
\newcommand{\LL}{{\mathcal L}}
\newcommand{\sS}{{\mathcal S}}
\newcommand{\VV}{{\mathcal V}} %
\newcommand{\IRB}{I_{RB}}
\newcommand{\Ii}{i_I}
\newcommand{\IE}{I_E}
\newcommand{\RF}{r_f}
\newcommand{\VE}{v_E}
\newcommand{\RI}{r_I}
\newcommand{\MR}{m_r}
\newcommand{\C}{c}
\newcommand{\VRB}{v_{RB}}
\newcommand{\VI}{v_I}
\newcommand{\MH}{{m_h}}
\newcommand{\h}{h}
\newcommand{\til}[1]{\widetilde{#1}}
\newcommand{\pathto}{\rightsquigarrow}
\def\and{%
  \end{tabular}%
  \hskip 2.5em \@plus.17fil\relax
  \begin{tabular}[t]{c}}
\begin{document}

\title{Homeostasis Patterns}

\author{
William Duncan\renewcommand{\thefootnote}{\arabic{footnote}}\footnotemark[1] \and 
Fernando Antoneli\renewcommand{\thefootnote}{\arabic{footnote}}\footnotemark[2] 
\textsuperscript{,}\renewcommand{\thefootnote}{\fnsymbol{footnote}}\footnotemark[1] 
\and 
Janet Best\renewcommand{\thefootnote}{\arabic{footnote}}\footnotemark[3] \and
Martin Golubitsky\renewcommand{\thefootnote}{\arabic{footnote}}\footnotemark[3] \and 
Jiaxin Jin\renewcommand{\thefootnote}{\arabic{footnote}}\footnotemark[3]
\textsuperscript{,}\renewcommand{\thefootnote}{\fnsymbol{footnote}}\footnotemark[1] \and 
H. Frederik Nijhout\renewcommand{\thefootnote}{\arabic{footnote}}\footnotemark[4] \and 
Mike Reed\renewcommand{\thefootnote}{\arabic{footnote}}\footnotemark[5] \and 
Ian Stewart\renewcommand{\thefootnote}{\arabic{footnote}}\footnotemark[6]
}

\date{\today}

\maketitle

\renewcommand{\thefootnote}{\arabic{footnote}}
\footnotetext[1]{Simulations Plus, Lancaster, CA, USA}
\footnotetext[2]{Centro de Bioinform\'atica M\'edica, Universidade Federal de S\~ao Paulo, S\~ao Paulo, SP, Brazil}
\footnotetext[3]{Department of Mathematics, The Ohio State University, Columbus, OH, USA}
\footnotetext[4]{Department of Biology, Duke University, Durham, NC, USA}
\footnotetext[5]{Department of Mathematics, Duke University, Durham, NC, USA}
\footnotetext[6]{Mathematics Institute, University of Warwick, Coventry, UK}

\renewcommand{\thefootnote}{\fnsymbol{footnote}}
\footnotetext[1]{Correspondence:
\href{mailto:jin.1307@osu.edu}{jin.1307@osu.edu},
\href{mailto:fernando.antoneli@unifesp.br}{fernando.antoneli@unifesp.br}
}

\begin{abstract}
Homeostasis is a regulatory mechanism that keeps a specific variable close to a set value as other variables fluctuate.  The notion of homeostasis can be rigorously formulated when the model of interest is represented as an input-output network, with distinguished {\em input} and {\em output} nodes, and the dynamics of the network determines the corresponding {\em input-output function} of the system.  In this context, homeostasis can be defined as an `infinitesimal' notion, namely, the derivative of the input-output function is zero at an isolated point.  Combining this approach with graph-theoretic ideas from combinatorial matrix theory provides a systematic framework for calculating homeostasis points in models and classifying the different homeostasis types in input-output networks.  In this paper we extend this theory by introducing the notion of a {\em homeostasis pattern}, defined as a set of nodes, in addition to the output node, that are simultaneously infinitesimally homeostatic.  We prove that each homeostasis type leads to a distinct homeostasis pattern.  Moreover, we describe all homeostasis  patterns supported by a given input-output network in terms of a combinatorial structure associated to the input-output network.  We call this structure the {\em homeostasis pattern network}.

\medskip

\noindent
{\bf Keywords:} Homeostasis, 
Robust Perfect Adaptation,
Input-Output Network
\end{abstract}

\newpage

\tableofcontents

\section{Introduction}
\label{intro}

In biology, `homeostasis' originally referred to the ability of an organism to maintain a specific internal state despite varying external factors.  A typical example is the regulation of body temperature in a mammal despite variations in the temperature of its environment.  This concept goes back to 1849 when the French physiologist Claude Bernard observed this kind of regulation in the `milieu int\'erieur' (internal environment) of human organs such as the liver and pancreas; see the modern translation \cite{B49}.  The American physiologist Walter Cannon \cite{C26} developed this idea, coining the word `homeostasis' in 1926. The same basic concept has now spread to many areas of science.

In the literature, `homeostasis' is often modeled using differential equations, and is interpreted in two mathematically distinct ways.  One boils down to `stable equilibrium'. Here changes in the environment are considered to be perturbations of {\em initial conditions}.  A stronger (and, in our view, more appropriate) usage works with a parametrized family of differential equations, with a corresponding family of stable equilibria.  Now `homeostasis' means that this equilibrium changes by a relatively small amount when the {\em parameter} varies by a much larger amount.

In this paper we adopt the second, stronger, interpretation.  We also focus on the mathematical aspects of this concept. We say that {\em homeostasis} occurs in a system of differential equations when the output from the system $x_o$ is approximately constant on variation of an input parameter $\II$.  Golubitsky and Stewart~\cite{GS17} observe that homeostasis on some neighborhood of a specific value $\II_0$ follows from {\em infinitesimal homeostasis}, where $x_o'(\II_0) = 0$ and $'$ indicates differentiation with respect to $\II$.  This observation is essentially the well-known idea that the value of a function changes most slowly near a stationary (or critical) point.

\begin{remarks}\rm $ $
\begin{enumerate}[(a)]
\item Despite the name, `infinitesimal' homeostasis often implies that the system is homeostatic over a relatively large interval of the parameter \cite[Section 5.4]{GS23}.  The key quantity is the value of the {\em second} derivative $x_o''(\II_0)$ at the point $\II_0$.

\item Infinitesimal homeostasis is a {\em sufficient} condition for homeostasis over some interval of parameters, but it is not a necessary condition.  A function can vary slowly without having a stationary point.

\item One advantage of considering infinitesimal homeostasis is that it has a precise mathematical formulation, which makes it suitable for analysis using methods from singularity theory.  `Not varying by much' is a vaguer notion.

\item In applications, the quantity that experiences homeostasis can be a function of several internal variables, such as a sum of concentrations, or the frequency of an oscillation.  We do not consider such examples here, but similar `infinitesimal' methods might be developed for such cases.

\item Control-theoretic models of homeostasis often generate {\em perfect homeostasis}
(or {\em robust perfect adaptation} \cite{MA09,TM16,F16,K21,ST23}),

in which the equilibrium is exactly constant over the parameter range.  We do not adopt such a strong definition, in part because biological systems lack such precision.
However, everything we prove here can be applied to {\em perfect homeostasis}, since it is a particular case of infinitesimal homeostasis \cite{MA22}.
\END
\end{enumerate}
\end{remarks}

Wang \etal~\cite{WHAG21} consider infinitesimal homeostasis for a general class of {\em input-output networks} $\GG$.  Such a network has two distinguished nodes: the {\em input node} $\iota$, the only node that is affected by the input parameter $\II$, and the {\em output node} $o$.  To each fixed input-output network $\GG$ there is associated a space of admissible families of ODEs (or vector fields).  An admissible family of ODEs with a linearly stable family of equilibrium points defines an {\em input-output function} $x_o(\II)$ for $\GG$ (and the family of equilibria).  In \cite{WHAG21} it is shown that the derivative of $x_o(\II)$ with respect to $\II$ is given in terms of the determinant of the {\em homeostasis matrix} $H$ (see \eqref{eq:J_and_H}).  This `determinant formula' implies that input-output networks support infinitesimal homeostasis through a small number of distinct `mechanisms', called {\em homeostasis types}.

In this paper we consider the notion of a homeostasis pattern on a given input-output network $\GG$.  A {\em homeostasis pattern} is the set of nodes $j$ in $\GG$ (including the output node $o$) such that the node coordinate $x_j$, as a function of $\II$, satisfies $x'_j(\II_0) = 0$.  In other words, a homeostasis pattern is a set of nodes $\sS$ of $\GG$, that includes the output node $o$, and all nodes in $\sS$ are simultaneously (infinitesimally) homeostatic at a given parameter value $\II_0$.  The main result is that all the homeostasis patterns supported by a given input-output network $\GG$ can be completely classified in terms of the homeostasis types of $\GG$.

Consider for example the $6$ node input-output network $\GG$ shown in Figure \ref{F:example8}.

\begin{figure}[!htb]
\centerline{8
\includegraphics[width = .4\textwidth]{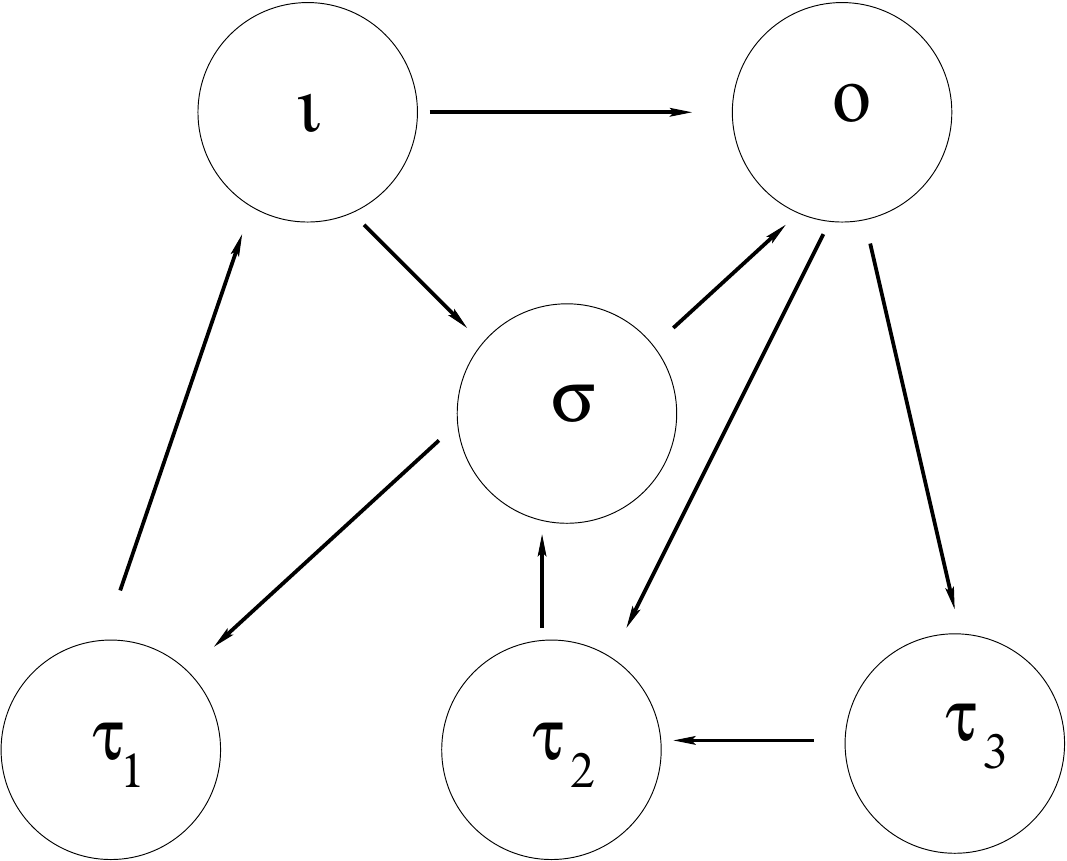}}
\caption{A $6$-node input-output network. \label{F:example8}}
\end{figure}

Although there are exactly $31$ subsets of nodes of $\GG$ including the output node $o$, only $4$ subsets define homeostasis patterns: $\{o\}$, $\{o, \tau_3\}$, $\{o, \tau_2, \tau_3\}$ and
$\{o, \tau_2, \tau_3, \sigma, \iota\}$.  These homeostasis patterns can be graphically represented by coloring the nodes of $\GG$ that are homeostatic (see Figure \ref{F:admissible}).

In this paper we lay out a general theory to classify all homeostasis patterns in a given input-output network. This method is purely combinatorial, based on the topology of the network, and does not rely on calculations involving the admissible ODEs. 
However, before going into the details of this theory, we use such calculations to give some indication of why the input-output network in Figure \ref{F:example8} has exactly the $4$ homeostasis patterns exhibited in Figure \ref{F:admissible}.  We do this using the results of~\cite{WHAG21}; see also subsection \ref{intro:io_networks}.

\begin{figure}[!htb]
\begin{subfigure}[c]{0.5\textwidth}
\centering
\includegraphics[width =0.6\textwidth]{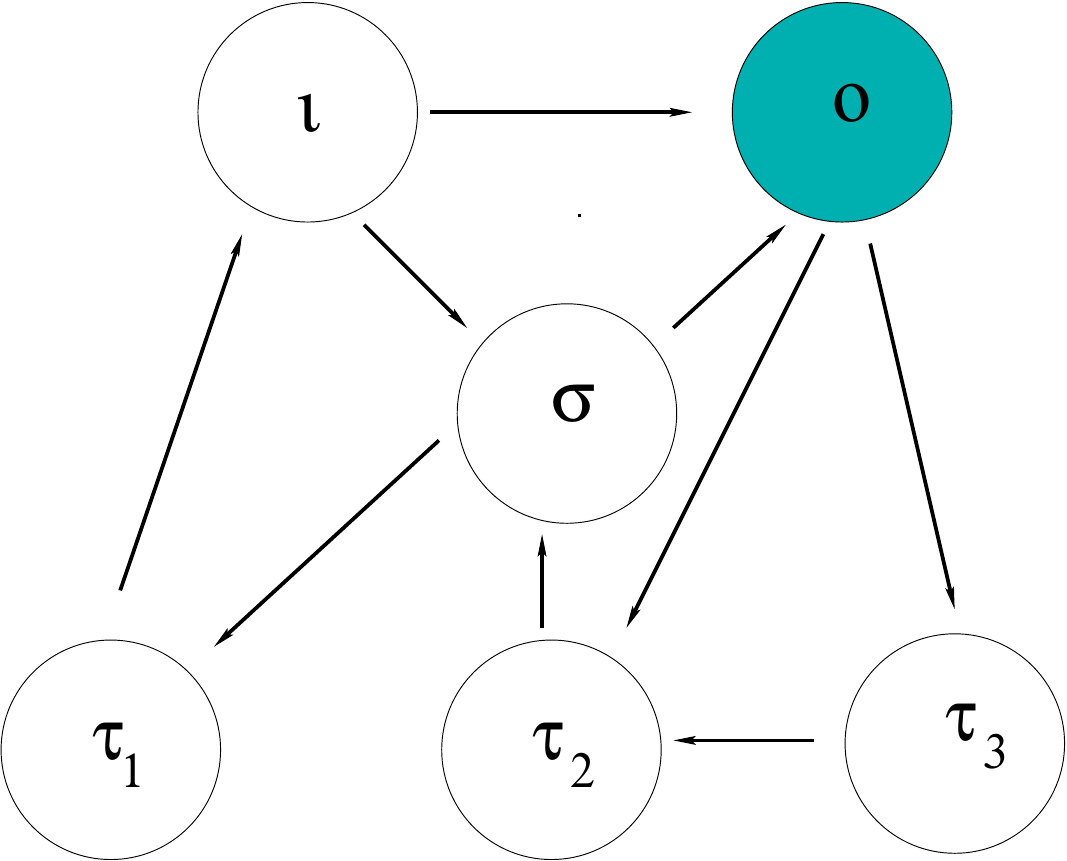}
\caption{$f_{\tau_3,\tau_3}(\II_0) = 0$}
\medskip\medskip
\end{subfigure}
\begin{subfigure}[c]{0.5\textwidth}
\centering
\includegraphics[width =0.6\textwidth]{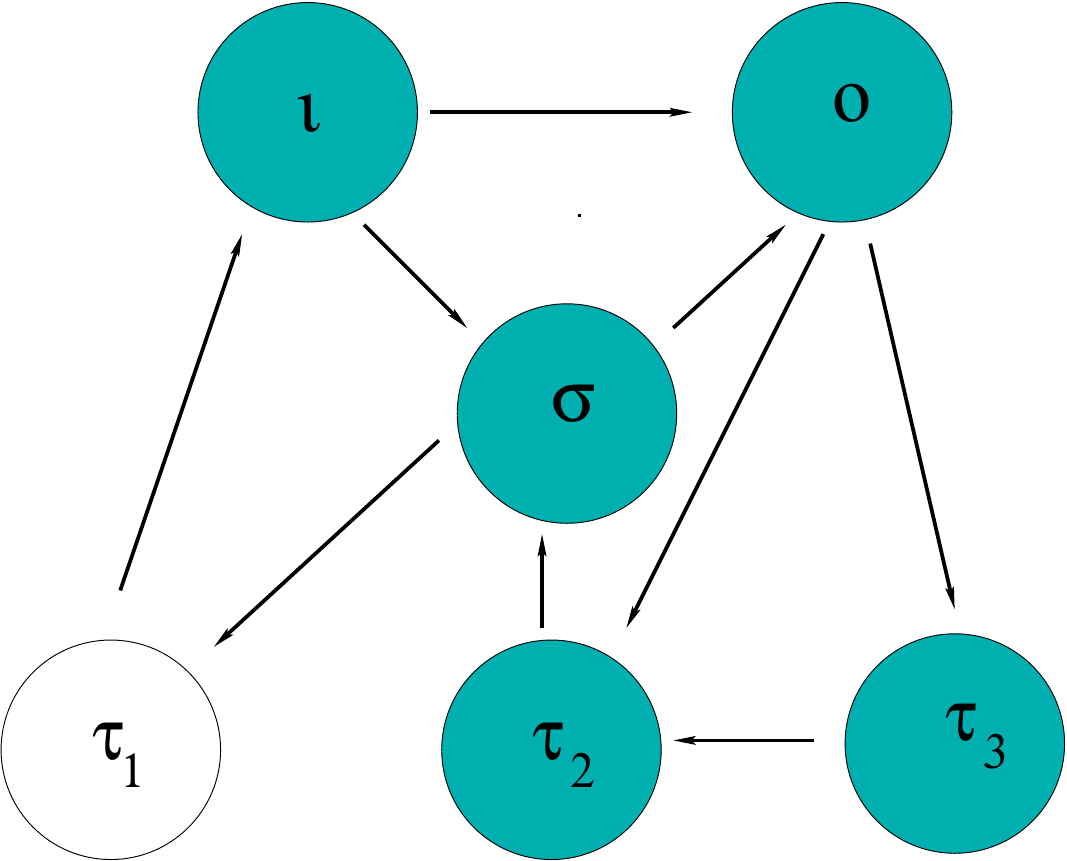}
\caption{$f_{\tau_1,\tau_1}(\II_0) = 0$}
\medskip\medskip
\end{subfigure}
\begin{subfigure}[c]{0.5\textwidth}
\centering
\includegraphics[width =0.6\textwidth]{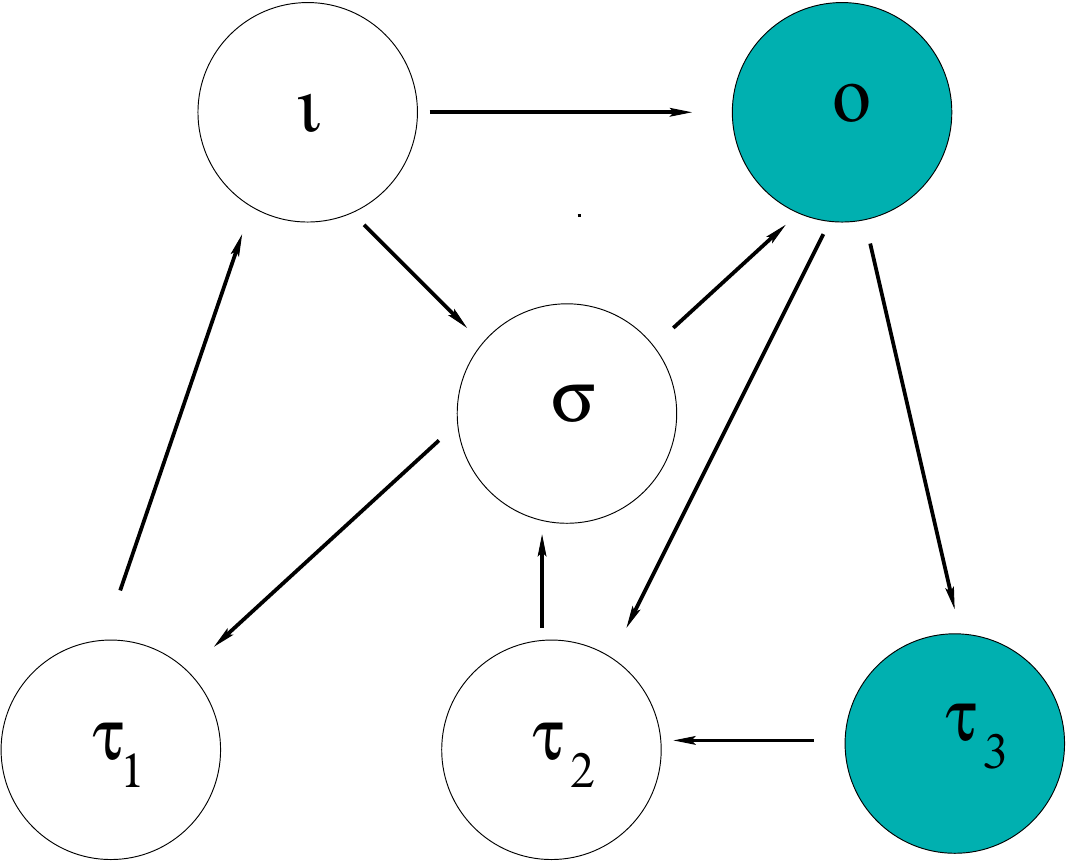}
\caption{$f_{\tau_2,\tau_2}(\II_0) = 0$}
\end{subfigure}
\begin{subfigure}[c]{0.5\textwidth}
\centering
\includegraphics[width =0.6\textwidth]{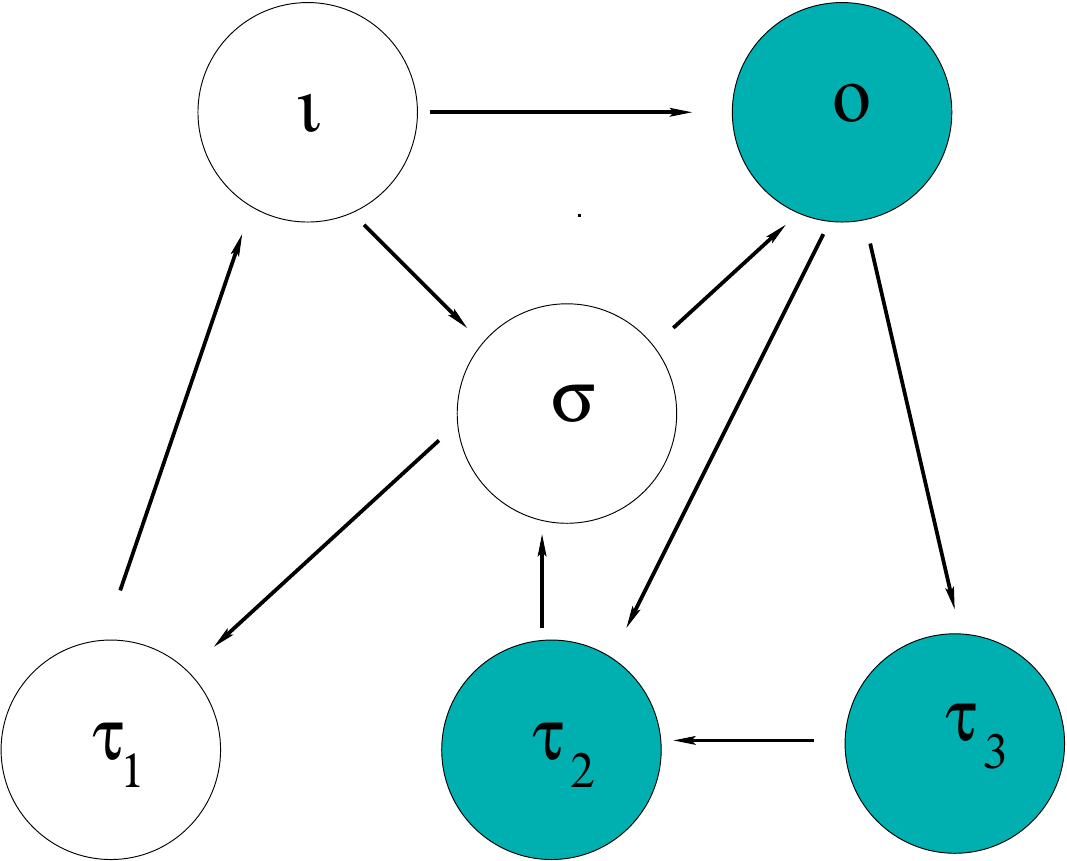}
\caption{$(f_{\sigma,\iota} f_{o,\sigma} - f_{\sigma,\sigma} f_{o,\iota})(\II_0) = 0$}
\end{subfigure}
\caption{The four infinitesimal homeostasis patterns of \eqref{e:admissible}.  Cyan nodes are homeostatic. \label{F:admissible}}
\end{figure}

The admissible system of parametrized equations for the network in Figure~\ref{F:example8}, in coordinates  $X = (\iota, \sigma, \tau_1,\tau_2, \tau_3, o)$, is:
\begin{equation} \label{e:admissible}
\begin{array}{ccl}
\dot{\iota} & = & f_\iota(\iota, \tau_1,\II) \\
\dot{\sigma} & = & f_\sigma(\iota, \sigma, \tau_2) \\
\dot{\tau}_1 & = & f_{\tau_1}(\sigma,\tau_1) \\
\dot{\tau}_2 & = & f_{\tau_2}(\tau_2,\tau_3,o) \\
\dot{\tau}_3 & = & f_{\tau_3}(\tau_3,o) \\
\dot{o} & = & f_o(\iota, \sigma,o)
\end{array}
\end{equation}

\pagebreak

The homeostasis matrix $H$ is obtained from the Jacobian matrix $J$ of \eqref{e:admissible} by removing the first row and the last column (see subsection \ref{intro:io_networks}, eq. \eqref{eq:J_and_H}).  This leads to:
\[
H = 
\Matrixc{f_{\sigma,\iota} & f_{\sigma,\sigma} & 0 & f_{\sigma,\tau_2} & 0\\  
 0 & f_{\tau_1, \sigma} &  f_{\tau_1, \tau_1} & 0 & 0 & \\ 
 0 & 0 & 0 & f_{\tau_2,\tau_2} & f_{\tau_2,\tau_3} \\  
0 & 0 & 0 & 0& f_{\tau_3,\tau_3} \\
f_{o,\iota} & f_{o,\sigma} & 0 & 0 & 0 
}
\]
Using row and column expansion it is straightforward to calculate
\begin{equation}  \label{H:example8}
\det(H) = f_{\tau_3,\tau_3} \, f_{\tau_1,\tau_1} \, f_{\tau_2,\tau_2} \,
(f_{\sigma,\iota} f_{o,\sigma} - f_{\sigma,\sigma} f_{o,\iota})  
\end{equation}
The `determinant formula' from~\cite{WHAG21} says that the input-output function $x_o(\II)$ undergoes infinitesimal homeostasis at $\II_0$ if and only if $\det(H) = 0$, evaluated at $(X(\II_0),\II_0)$.  Here $X(\II_0)$ is the equilibrium used to construct the input-output function (see subsection \ref{intro:io_networks}, Lemma \ref{lem:irreducible}).  The expression $\det(H)$ is a multivariate polynomial in the partial derivatives $f_{j,\ell}$ of the components of the admissible vector field.  As a polynomial, $\det(H)$ is reducible with $4$ irreducible factors, so $\det(H)=0$ if and only if one of its irreducible factors vanishes.

According to~\cite{WHAG21} these irreducible factors determine the homeostasis types
 (see subsection \ref{intro:io_networks}, Theorem~\ref{thm:irreducible}). Hence, \eqref{e:admissible} has $4$ homeostasis types.  One of the main results of this paper says that each homeostasis type determines a unique homeostasis pattern (see Theorem  \ref{thm:bijective_corresp_intro}).  Moreover, our theory gives a purely combinatorial procedure to find the set of nodes that belong to each homeostasis pattern.  When applied to Figure \ref{F:example8} it yields the four patterns in Figure \ref{F:admissible}.

In a simple example such as Figure \ref{F:example8} it is also possible to find the sets of nodes in each homeostasis pattern by a `bare hands' calculation based on the admissible ODEs (see subsection~\ref{intro:example}).  Here this calculation serves as a check on the results.  Direct calculations with the ODEs can, of course, be used instead of the combinatorial approach in sufficiently simple cases.

The rest of this introduction divides into subsections.  In subsection \ref{intro:io_networks} we define the admissible differential equations that are associated to input-output networks and infinitesimal homeostasis.  In subsection \ref{intro:type_pattern} we define homeostasis types and homeostasis patterns.  In subsection \ref{intro:example} we determine the homeostasis patterns of the network shown in Figure \ref{F:example8}.

\subsection{Input-Output Networks and Infinitesimal Homeostasis Types}
\label{intro:io_networks}

We begin by introducing the basic objects: \emph{input-output networks}, \emph{network admissible differential equations}, and \emph{infinitesimal homeostasis types}. Our exposition follows \cite{WHAG21}.  

\begin{definition}[{\cite[Section~1.2]{WHAG21}}] \label{defn:io_network}
An \emph{input-output network} is a directed graph $\GG$ with nodes $\kappa\in\CC$, arrows in $\EE$ 
connecting nodes in $\CC$, a distinguished input node $\iota$, and a distinguished output node $o$. 
The network $\GG$ is a \emph{core network} if every node in $\GG$ is downstream from $\iota$ and upstream from $o$. \END
\end{definition}

An {\em admissible system} of differential equations associated with $\GG$ has the form
\begin{align}\label{eq:system}
    \begin{split}
        \dot{x}_\iota &= f_\iota(x_\iota,x_\kappa,x_o,\II) \\
        \dot{x}_\kappa &= f_\kappa(x_\iota,x_\kappa,x_o) \\
        \dot{x}_o &= f_o(x_\iota,x_\kappa,x_o)
    \end{split}
\end{align}
where $\II\in\R$ is the input parameter, $X = (x_\iota,x_\kappa,x_o) \in \R\times \R^n \times \R$ is the vector of state variables associated to nodes in $\CC$, and $f(X,\II) = (f_\iota(X,\II),f_\kappa(X),f_o(X))$ is a smooth family of 
mappings on the state space $\R\times \R^n \times \R$. Note that $\II$ appears only in the equation of system \eqref{eq:system} corresponding to the input node.

We can write \eqref{eq:system} as 
\[
    \dot{X} = f(X,\II).
\]
We denote the partial derivative of the function associated to node $j$ with respect to the state variable associated to node $\ell$ by 
\[
    f_{j,\ell} = \frac{\partial}{\partial x_\ell} f_j
\]
We assume $f_{j,\ell} \equiv 0$ precisely when no arrow connects node $\ell$ to node $j$. That is, $f_j$ is independent of $x_\ell$ when there is no arrow $\ell\to j$.  This is a modeling assumption made in $\GG$.

Suppose $\dot{X} = f(X,\II_0)$ has a hyperbolic equilibrium at $X_0$. Then the implicit function theorem implies that there is a unique family of equilibria $X(\II) = (x_\iota(\II),x_\kappa(\II),x_o(\II))$ such that $X(\II_0) = X_0$ and $f(X(\II),\II) = 0$ for all $\II$ near $\II_0$. 

\begin{definition}
The mapping $\II \mapsto x_o(\II)$ is an \emph{input-output function}, which is defined on a neighborhood of $\II_0$. \emph{Infinitesimal homeostasis} occurs at $\II_0$ if $x_o'(\II_0) = 0$ where $'$ indicates differentiation with 
respect to $\II$. 
\begin{enumerate}[(a)]
\item If $x_o'(\II_0) = 0$ and $x_o''(\II_0)\neq 0$, then $o$ has a \emph{simple homeostasis point} at $(X_0,\II_0)$. 
\item If $x_o'(\II_0) = x_o''(\II_0)  = 0$ and $x_o'''(\II_0)\neq 0$, then $o$ has a \emph{chair point} at $(X_0,\II_0)$. \END
\end{enumerate}
\end{definition}
Nijhout \etal~\cite{NBR14} associated homeostasis with chairs, defined as a curve that is monotone except for a flat section.
The infinitesimal notion of a `chair point' was introduced in \cite{GS17}.

\begin{remark} \normalfont
Each node in an input-output network $\GG$ corresponds to a one-dimensional state variable of the admissible system.
In particular, the output node corresponds to a scalar quantity and the input parameter is a scalar quantity. 
This class of systems considered in this paper is also known as {\em single input, single output} (SISO) systems.
It is possible to consider input-output networks with multiple input nodes, but a single input parameter~\cite{MA22} and single output, and multiple inputs and single output~\cite{MA23}. \END
\end{remark}

\subsection{Homeostasis Type and Homeostasis Pattern}
\label{intro:type_pattern}

Wang~\etal~\cite{WHAG21} show that infinitesimal homeostasis occurs when the determinant of the homeostasis matrix $H$ is $0$, where the $(n+1)\times (n+1)$ matrix $H$ is obtained from the $(n+2)\times(n+2)$ Jacobian matrix $J$ of \eqref{eq:system} by deleting its first row and last column. Indeed
\begin{align}\label{eq:J_and_H}
J = \Matrix{f_{\iota,\iota} & f_{\iota,\kappa} & f_{\iota, o} \\
f_{\kappa,\iota} & f_{\kappa,\kappa} & f_{\kappa,o} \\
f_{o,\iota} & f_{o,\kappa} & f_{o,o}} 
\qquad \Longrightarrow \qquad 
H = \Matrix{f_{\kappa,\iota} & f_{\kappa,\kappa} \\
            f_{o,\iota} & f_{o,\kappa}}
\end{align}
where $J$ and $H$ are both functions of $(X(\II),\II)$ as in \eqref{eq:system}.  More precisely: 

\begin{lemma}[{\cite[Lemma 1.5]{WHAG21}}] \label{lem:irreducible}
The input-output function $x_o(\II)$ undergoes infinitesimal homeostasis at $\II_0$ if and only if $\det(H) = 0$, evaluated at $(X_0,\II_0)$. 
\end{lemma}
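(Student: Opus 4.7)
The plan is to apply implicit differentiation to the equilibrium identity $f(X(\II),\II)\equiv 0$ and then invoke Cramer's rule to isolate $x_o'(\II)$. Differentiating with respect to $\II$ produces
\begin{equation*}
J\cdot X'(\II) \;=\; -f_{\II},
\end{equation*}
where $f_{\II}=(f_{\iota,\II},0,\ldots,0)^T$ because $\II$ enters only through the input node equation in \eqref{eq:system}. Since $X_0$ is a hyperbolic equilibrium, $J$ is invertible at $(X_0,\II_0)$ and by continuity remains invertible on a neighborhood, so the linear system has a unique solution $X'(\II)$.

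Next I would apply Cramer's rule to pick off the last coordinate of $X'(\II)$. This gives
\begin{equation*}
x_o'(\II) \;=\; \frac{\det(\widetilde{J})}{\det(J)},
\end{equation*}
where $\widetilde{J}$ is $J$ with its last column replaced by $-f_{\II}$. Since $-f_{\II}$ has only one nonzero entry (in position $1$), Laplace expansion of $\det(\widetilde{J})$ along that last column collapses to a single term, and the remaining $(n+1)\times(n+1)$ minor is obtained by deleting the first row and the last column of $J$, which by the definition in \eqref{eq:J_and_H} is exactly the homeostasis matrix $H$. Hence
\begin{equation*}
\det(\widetilde{J}) \;=\; (-1)^{1+(n+2)}(-f_{\iota,\II})\det(H) \;=\; \pm\, f_{\iota,\II}\det(H).
\end{equation*}

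Combining these, $x_o'(\II)=\pm f_{\iota,\II}\det(H)/\det(J)$ in a neighborhood of $\II_0$. With $\det(J)(X_0,\II_0)\neq 0$ from hyperbolicity and $f_{\iota,\II}(X_0,\II_0)\neq 0$ (the standing assumption that $\II$ genuinely acts at the input node), the quotient vanishes at $\II_0$ if and only if $\det(H)$ vanishes at $(X_0,\II_0)$, which is the claim.

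No step strikes me as a serious obstacle — the argument is essentially bookkeeping around the sparsity of $f_{\II}$. The one point worth checking carefully is that the $(1,n+2)$-minor of $\widetilde{J}$ coincides with $H$ without any sign-altering row or column rearrangement; this should be immediate from the block layout of $J$ in \eqref{eq:J_and_H}. The sign produced by the cofactor expansion is irrelevant for the equivalence, since it contributes a nonzero scalar factor, and the nondegeneracy $f_{\iota,\II}\neq 0$ is what is needed to make the ``only if'' direction genuine rather than vacuous.
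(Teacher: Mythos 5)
Your argument is correct and is essentially the proof given in the cited source \cite{WHAG21} (the present paper only quotes the lemma): implicit differentiation gives $J\,X'=-f_{\II}$, Cramer's rule plus the sparsity of $f_{\II}$ yields $x_o'(\II)=\pm f_{\iota,\II}\det(H)/\det(J)$, and the conclusion follows from hyperbolicity together with the genericity assumption $f_{\iota,\II}\neq 0$. Your closing remarks correctly identify the only two points that need care, namely that the $(1,n+2)$ minor is exactly $H$ and that $f_{\iota,\II}\neq 0$ is what makes the equivalence two-sided.
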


In \cite{WHAG21} the authors show that the determination of infinitesimal homeostasis in an input-output networks reduces to the study of core networks. We assume throughout that 
the input-output networks are core networks.  See Definition~\ref{defn:io_network}.

\begin{theorem}[{\cite[Theorem 1.11]{WHAG21}}] \label{thm:irreducible}
Assume \eqref{eq:system} has a hyperbolic equilibrium at $(X_0,\II_0)$. Then there are permutation matrices $P$ and $Q$ such that $PHQ$ is block upper triangular with square diagonal blocks $B_1,\ldots,B_m$. The blocks $B_j$ are irreducible in the sense that each $B_j$ cannot be further block triangularized. It follows that 
\begin{align}\label{eq:irreducible}
    \det(H) = \det(B_1)\cdots\det(B_m)
\end{align}
is an irreducible factorization of $\det(H)$. 
\end{theorem}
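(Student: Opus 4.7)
The plan is to prove the statement in two stages: first, establish the block upper triangular normal form for $H$ via a graph-theoretic decomposition; second, prove that the determinant of each diagonal block is irreducible as a polynomial in the partial derivatives $\{f_{k,\ell}\}$.

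For the first stage, I would associate to $H$ an auxiliary directed graph $\tilde{\GG}$ on the vertex set $\kappa \cup \{*\}$, where $*$ is a single node identifying the row-label $o$ with the column-label $\iota$ (the remaining $\kappa$-rows and $\kappa$-columns identified pointwise). By construction, $\tilde{\GG}$ has an edge $\ell \to j$ precisely when the corresponding entry of $H$ is nonzero, so $H$ is the (transposed) adjacency matrix of $\tilde{\GG}$ with entries $f_{j,\ell}$. I would then decompose $\tilde{\GG}$ into its strongly connected components $\mathcal{C}_1,\ldots,\mathcal{C}_m$; the condensation is a DAG, and any reverse topological ordering of the SCCs yields row/column permutations $P,Q$ that place $PHQ$ in block upper triangular form with square diagonal blocks $B_1,\ldots,B_m$ indexed by the SCCs. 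The identity $\det(H) = \det(B_1)\cdots\det(B_m)$ is then the standard block-triangular determinant formula. Each $B_j$ cannot be further block triangularized because $\mathcal{C}_j$ is, by definition, a single strongly connected component; were $B_j$ block triangularizable, the partition of its index set would yield a nontrivial partition of the vertices of $\mathcal{C}_j$ with no edges from one part to the other, contradicting strong connectedness.

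For the second stage, I would prove polynomial irreducibility of $\det(B_j)$ in the variables $\{f_{k,\ell}\}$ corresponding to edges of $\mathcal{C}_j$. The Leibniz expansion of $\det(B_j)$ is multilinear in these variables: viewing $\det(B_j)$ as a polynomial in any single variable $f_{k,\ell}$, it is linear with coefficient equal to the appropriate cofactor. Consequently, in any factorization $\det(B_j) = P \cdot Q$ with $P,Q$ nonconstant, each variable $f_{k,\ell}$ must appear in at most one of $P, Q$, so the edge-variables partition into two disjoint subsets $E_P, E_Q$. I would then show this partition forces a corresponding partition of the vertex set of $\mathcal{C}_j$ with no directed edges running from one class to the other in $\mathcal{C}_j$, again contradicting strong connectedness. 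The bridge between "variables of $P$" and "vertices" is obtained by tracking, for each vertex $v$ of $\mathcal{C}_j$, whether the diagonal entry $f_{v,v}$ (or, failing that, the row-$v$ edges) lies in $E_P$ or $E_Q$, and using that in every nonvanishing Leibniz term the row and column of every vertex are used exactly once.

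The main obstacle will be the second stage: upgrading from matrix-level irreducibility (no further block triangularization) to polynomial-level irreducibility (no nontrivial multiplicative factorization over the polynomial ring in the entries). The delicate point is ruling out "exotic" factorizations not visibly induced by a block separation of the matrix. I expect a specialization argument to be necessary: by setting all edge-variables of $\mathcal{C}_j$ outside a chosen simple directed cycle through a prescribed edge to zero, $\det(B_j)$ collapses to a single monomial (up to sign), and any alleged factorization must restrict compatibly under every such specialization; combined with the partition argument above, this should be enough to force one factor to be a unit. The hypothesis that $\GG$ is a core network enters at this point, guaranteeing enough cycles in $\tilde{\GG}$ for the specializations to be informative.
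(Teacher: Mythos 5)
First, note that this paper does not prove Theorem~\ref{thm:irreducible}: it is imported verbatim from \cite[Theorem~1.11]{WHAG21}, so the proof you should be comparing against lives in that reference. There, the decomposition is the two-sided (Dulmage--Mendelsohn/Frobenius--K\"onig) normal form into \emph{fully indecomposable} diagonal blocks under independent row and column permutations $P$ and $Q$, combined with the classical combinatorial-matrix-theory fact that the determinant of a generic matrix with prescribed zero pattern is an irreducible polynomial if and only if the matrix is fully indecomposable.

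Your proposal has a genuine error at its foundation: you build the blocks from the strongly connected components of the digraph $\tilde{\GG}$ obtained by identifying the row label $o$ with the column label $\iota$. The SCC decomposition corresponds to simultaneous permutation $PHP^{T}$, whereas the theorem asserts block triangularization under \emph{independent} permutations $PHQ$; these coincide only when every diagonal entry is nonzero, and in $H$ the ``diagonal'' entry at the identified node $*$ is $f_{o,\iota}$, which vanishes unless $\iota\to o$ is an arrow of $\GG$. Concretely, take the chain $\iota\to\sigma\to o$ with self-couplings: then
\begin{equation*}
H=\begin{pmatrix} f_{\sigma,\iota} & f_{\sigma,\sigma}\\ 0 & f_{o,\sigma}\end{pmatrix},
\qquad \det(H)=f_{\sigma,\iota}\,f_{o,\sigma},
\end{equation*}
which factors into two Haldane blocks ($\sigma$ is super-simple), yet your $\tilde{\GG}$ has edges $*\to\sigma$, $\sigma\to *$ and a self-loop at $\sigma$, hence is a single SCC; your method would output $m=1$ and declare $\det(H)$ irreducible, which is false. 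The same conflation undermines your Stage~2: strong connectedness of $\mathcal{C}_j$ does not imply that $\det(B_j)$ is irreducible (a bare $3$-cycle with no self-loops is strongly connected but its determinant is a monomial of degree $3$), so your planned contradiction ``partition of variables $\Rightarrow$ edge-free cut of $\mathcal{C}_j$ $\Rightarrow$ contradicts strong connectedness'' is aimed at the wrong invariant. The argument can be repaired, but only by replacing SCCs throughout with full indecomposability: a square matrix is fully indecomposable iff it has no $k\times(n-k)$ zero submatrix for $1\le k\le n-1$, every square matrix with total support is $PHQ$-equivalent to a block triangular matrix with fully indecomposable diagonal blocks, and irreducibility of $\det(B_j)$ is then exactly the Frobenius theorem on determinants of fully indecomposable generic matrices. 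In the network language this finer splitting is what produces the structural blocks cut at the super-simple nodes (and the Haldane $1\times1$ blocks), which your construction cannot see.
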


\begin{definition} \rm
Let $\mathcal{G}$ be an input-output network and $H$ its homeostasis matrix. Each irreducible 
square block $B_{\eta}$ in \eqref{eq:irreducible} is called a {\em homeostasis block}.  Further we 
say that infinitesimal homeostasis in $\GG$ is of homeostasis type $B_{\eta}$ if for all $\xi \neq \eta$
\begin{equation}
\det (B_{\eta}) = 0 \ \text{ and } \ \det (B_{\xi}) \neq 0.
\end{equation}
\end{definition}

\begin{remark}[{\cite[Section~1.10]{WHAG21}}] \rm 
Let $B_\eta$ be a homeostasis type and let
\[
h_\eta(\II) \equiv \det B_\eta(X(\II),\II)
\]
A chair point of type 
$\eta$ occurs at $\II_0$ if $h_\eta(\II_0) = h_\eta '(\II_0) = 0$ and $h_\eta '' (\II_0) \neq 0$. \END
\end{remark}

In principle every homeostasis type can lead to infinitesimal homeostasis, that is, $h_\eta(\II_0) = 0$ for some input 
value $\II_0$. For simplicity, we say that node $x_o$ is \emph{homeostatic at $\II_0$}.   We ask: 
If the output node is homeostatic  at $\II_0$, which other nodes must also be homeostatic at $\II_0$?
Based on this question we introduce the following concept:

\begin{definition} \rm
A {\em homeostasis pattern} corresponding to the homeostasis block $B_\eta$ at $\II_0$ is the collection of 
all nodes, including the output node $o$, that are simultaneously forced to be homeostatic at $\mathcal{I}_0$.\END
\end{definition}

\subsection{Example of Direct Calculation of Homeostasis Patterns}
\label{intro:example}

Finally, we determine the four homeostasis patterns by direct calculation.  We do this by  assuming that there is a one-parameter 
family of stable equilibria $X(\II)$ where $X(\II_0) = X_0$  using implicit differentiation with respect to $\II$
(indicated by $'$), and expanding \eqref{e:admissible} to first order at  $\II_0$.  The linearized system of equations is
\begin{equation}  \label{e:1st_order}
\begin{array}{rcl}
0 & = & f_{\iota,\iota}\iota' + f_{\iota,\tau_1}\tau_1' + f_{\iota,\II} \\
0 & = & f_{\sigma,\iota}\iota' + f_{\sigma,\sigma}\sigma' + f_{\sigma,\tau_2}\tau_2' \\
0 & = & f_{\tau_1,\sigma}\sigma' + f_{\tau_1,\tau_1}\tau_1' \\
0 & = & f_{\tau_2,\tau_2}\tau_2' + f_{\tau_2,\tau_3}\tau_3' \\
0 & = & f_{\tau_3,\tau_3}\tau_3'\\
0 & = & f_{o,\iota}\iota' + f_{o,\sigma}\sigma'
\end{array}
\end{equation}
Next we compute the homeostasis patterns corresponding to the $4$ homeostasis types of \eqref{e:admissible}.

\subsubsection*{(a) Homesotasis Type: $f_{\tau_3,\tau_3} = 0$. Homeostatic nodes: $\{o\}$}  
Equation~\eqref{e:1st_order} becomes
\begin{equation}  \label{e:1st_orderA}
\begin{array}{rcl}
0 & = & f_{\iota,\iota}\iota' + f_{\iota,\tau_1}\tau_1' + f_{\iota,\II} \\
0 & = & f_{\sigma,\iota}\iota' + f_{\sigma,\sigma}\sigma' + f_{\sigma,\tau_2}\tau_2' \\
0 & = & f_{\tau_1,\sigma}\sigma' + f_{\tau_1,\tau_1}\tau_1' \\
0 & = & f_{\tau_2,\tau_2}\tau_2' + f_{\tau_2,\tau_3}\tau_3' \\
0 & = & 0 \\
0 & = & f_{o,\iota}\iota' + f_{o,\sigma}\sigma'
\end{array}
\end{equation}
Since $f_{\tau_2,\tau_2}$ and $f_{\tau_2,\tau_3}$ are generically nonzero at homeostasis, the 
fourth equation implies that generically $\tau_2'$ and $\tau_3'$ are nonzero.
The second and sixth equations can be rewritten as
\[
\Matrixc{f_{\sigma,\iota} &  f_{\sigma,\sigma} \\  f_{o,\iota} & f_{o,\sigma}}
\Matrixc{\iota'  \\ \sigma'} = -\Matrixc{f_{\sigma,\tau_2}\tau_2' \\ 0}
\]
Generically the right hand side of this matrix equation at $\II_0$ is nonzero; hence generically $\iota'$ and $\sigma'$ are also nonzero.  The third equation implies that generically $\tau_1'$ is nonzero.  Therefore, in this case, the only homeostatic node is $o$.

\subsubsection*{(b) Homeostasis Type: $f_{\tau_1,\tau_1} = 0$. Homeostatic nodes: $\{ \iota, \tau_2, \tau_3, \sigma, o\}$}  

In this case \eqref{e:1st_order} becomes
\begin{equation}  \label{e:1st_orderB}
\begin{array}{rcl}
0 & = & f_{\iota,\iota}\iota' + f_{\iota,\tau_1}\tau_1' + f_{\iota,\II} \\
0 & = & f_{\sigma,\iota}\iota' + f_{\sigma,\sigma}\sigma' + f_{\sigma,\tau_2}\tau_2' \\
0 & = & f_{\tau_1,\sigma}\sigma'  \\
0 & = & f_{\tau_2,\tau_2}\tau_2' + f_{\tau_2,\tau_3}\tau_3' \\
0 & = & f_{\tau_3,\tau_3}\tau_3'\\
0 & = & f_{o,\iota}\iota' + f_{o,\sigma}\sigma'
\end{array}
\end{equation}
The fifth equation implies that generically $\tau_3' = 0$.  The fourth equation implies that generically $\tau_2'=0$. The third equation implies that generically $\sigma' = 0$ and the sixth equation implies that generically $\iota'$ is zero.
 It follows that the infinitesimal homeostasis pattern is  $\iota' =\tau_2' = \tau_3' = \sigma'  = o' = 0$.

\subsubsection*{(c) Homeostasis Type: $f_{\tau_2,\tau_2} = 0$. Homeostatic nodes: $\{\tau_3,o\}$}  

Equation~\eqref{e:1st_order} becomes
\begin{equation}  \label{e:1st_orderC}
\begin{array}{rcl}
0 & = & f_{\iota,\iota}\iota' + f_{\iota,\tau_1}\tau_1' + f_{\iota,\II} \\
0 & = & f_{\sigma,\iota}\iota' + f_{\sigma,\sigma}\sigma' + f_{\sigma,\tau_2}\tau_2' \\
0 & = & f_{\tau_1,\sigma}\sigma' + f_{\tau_1,\tau_1}\tau_1' \\
0 & = &  f_{\tau_2,\tau_3}\tau_3' \\
0 & = & f_{\tau_3,\tau_3}\tau_3'\\
0 & = & f_{o,\iota}\iota' + f_{o,\sigma}\sigma'
\end{array}
\end{equation}
The fourth or fifth equation implies that $\tau_3' = 0$. The first and sixth equations imply that $\iota'$, $\sigma'$, and $\tau_2'$  are nonzero. The third equation implies that generically $\tau_1'$ is nonzero.  Hence the 
infinitesimal homeostasis pattern is $\{\tau_3,o\}$.

\subsubsection*{(d) Homeostasis Type: $f_{\sigma,\iota} f_{o,\sigma} - f_{\sigma,\sigma} f_{o,\iota} = 0$. Homeostatic 
nodes $\{\tau_2, \tau_3, o\}$.} 

To repeat, Equation \eqref{e:1st_order} is
\begin{equation}  \label{e:1st_orderD}
\begin{array}{rcl}
0 & = & f_{\iota,\iota}\iota' + f_{\iota,\tau_1}\tau_1' + f_{\iota,\II} \\
0 & = & f_{\sigma,\iota}\iota' + f_{\sigma,\sigma}\sigma' + f_{\sigma,\tau_2}\tau_2' \\
0 & = & f_{\tau_1,\sigma}\sigma' + f_{\tau_1,\tau_1}\tau_1' \\
0 & = & f_{\tau_2,\tau_2}\tau_2' + f_{\tau_2,\tau_3}\tau_3' \\
0 & = & f_{\tau_3,\tau_3}\tau_3'\\
0 & = & f_{o,\iota}\iota' + f_{o,\sigma}\sigma'
\end{array}
\end{equation}
The fifth equation implies generically that $\tau_3' = 0$ and the fourth equation implies generically that $\tau_2' = 0$. Again, the second and sixth equations can be rewritten in matrix form as
\[
\Matrixc{f_{\sigma,\iota} &  f_{\sigma,\sigma} \\  f_{o,\iota} & f_{o,\sigma}}
\Matrixc{\iota'  \\ \sigma'} = -\Matrixc{f_{\sigma,\tau_2}\tau_2' \\ 0} = 0
\]
Hence generically $\iota'$ and $\sigma'$ are nonzero. The third equation implies that $\tau_1$ is nonzero.  Hence the homeostatic nodes are $\tau_2,\tau_3,o$.

The homeostasis types of \eqref{e:admissible} with the corresponding homeostasis patterns are summarized in Table~\ref{T:example8}.

\begin{table}[!htb]
\begin{center}
\begin{tabular}{|c|l|l|c|}
\hline
Homeostasis Type & Homeostasis Pattern &  Figure~\ref{F:admissible} \\
\hline
$f_{\tau_3,\tau_3} = 0$ & $\{o\}$  &(a)  \\
$f_{\tau_1,\tau_1} = 0$ & $\{\iota,\tau_2,\tau_3,\sigma,o\}$ & (b)\\
$f_{\tau_2,\tau_2} = 0$ & $\{\tau_3,o\}$ & (c) \\
$f_{\sigma,\iota} f_{o,\sigma} - f_{\sigma,\sigma} f_{o,\iota} = 0$ & $\{\tau_2,\tau_3,o\}$ & (d) \\
\hline
\end{tabular}
\end{center}
\caption{Infinitesimal homeostasis patterns for admissible systems in \eqref{e:admissible}}
\label{T:example8} 
\end{table}

In principle the homeostasis patterns of any input-output network can be computed in the manner shown above, but in practice this becomes complicated for large networks.
In this paper we introduce another approach based on the pattern network $\PP$ associated to the input-output network $\GG$. 
This method is both computationally and theoretically superior.  
First, the method introduced here provides a reduction of the size of the original input-output network to the pattern network, which is obtained from the former by `collapsing' certain subsets of nodes into single nodes.
Second, the classification of the homeostasis patterns using the pattern network is given by an algorithm (which can be easily extracted from the main theorems).
We illustrate this by working out the homeostasis patterns of the network shown in Figure \ref{T:example8} using the new approach in Example \ref{E:example8_pattern}.
Finally, the new conceptual framework allows us to give a new characterization of the homeostasis types, namely, that they correspond uniquely to the homeostasis patterns.

\subsection{Structure of the Paper}
\label{intro:structure}

Sections \ref{S:homeostasis}-\ref{S:homeo_induce}, introduce the terminology of input-output networks, the homeostasis pattern network $\PP$, and homeostasis induction.  In Section \ref{SS:CHP} we state four of the main theorems of this paper, Theorems \ref{thm:struct_to_struct}, \ref{thm:struct_to_app}, \ref{thm:app_to_struct}, \ref{thm:app_to_app}, which characterize homeostasis patterns combinatorially.
Section \ref{sec:overview} provides an overview of the proofs of these main results.   In 
Section \ref{sec:comb_charact}, we consider combinatorial characterizations of the input-output 
networks $\GG(\KK)$ that are obtained by repositioning the output node on a given input-output network from $o$ to $\kappa$.  Sections \ref{sec:struct_pattern} and \ref{sec:app_pattern} determine the structural and appendage homeostasis pattern respectively. In Section \ref{S:properties_induction} we discuss properties of homeostasis induction. Specifically we show that a homeostasis pattern uniquely determines its homeostasis type. This result is a restatement of Theorem~\ref{thm:bijective_corresp_intro}.  

The paper ends in Section~\ref{S:discussion} with a brief discussion of various types of networks  that support different aspects of infinitesimal homeostasis.  These networks include gene regulatory networks (GRN), input-output networks with input node = output node, and higher codimension types of infinitesimal homeostasis.

\section{Aspects of Input-Output Networks}
\label{terminology}

In this section, we recall additional basic terminology and results on infinitesimal homeostasis in input-output networks from~\cite{WHAG21}. 

\subsection{Homeostasis Subnetworks}
\label{S:homeostasis}

Wang \etal~\cite[Definition 1.14]{WHAG21} associate a \emph{homeostasis subnetwork} $\KK_\eta\subset\GG$ with each homeostasis block $B_\eta$  (recall Theorem \ref{thm:irreducible}) and give a graph-theoretic description of each $\KK_\eta$.
More precisely, the subnetwork $\KK_\eta$ of $\GG$ associated with the homeostasis block $B_\eta$ is
defined as follows. The nodes in $\KK_\eta$ are the union of nodes $p$ and $q$ where $f_{p,x_q}$ is
a nonzero entry in $B_\eta$ and the arrows of $\KK_\eta$ are the union of arrows $q \to p$ where
$p \neq q$.

\begin{definition}[Definition 1.15 of \cite{WHAG21}] \label{D:simple_node}
Let $\GG$ be a core input-output network. 
\begin{enumerate}[(a)]
\item A \emph{simple path} from node $\kappa_1$ to node $\kappa_2$ in $\GG$ is a directed path that starts at $\kappa_1$, ends at $\kappa_2$, and visits each node on the path exactly once. We denote the existence of a 
simple path from $\kappa_1$ to $\kappa_2$ by $\kappa_1 \pathto \kappa_2$. 
A \emph{simple cycle} is a simple path whose first and last nodes are identical. \item An \emph{$\iota o$-simple path} is a simple path from the input node $\iota$ to the output node $o$. 

\item A node $\sigma$ is \emph{simple} if it lies on an $\iota o$-simple path. A node $\tau$ is \emph{appendage} if it is not simple.
\item A simple node $\rho$ is \emph{super-simple} if it lies on every $\iota o$-simple path. \END
\end{enumerate}
\end{definition}   

We typically use $\sigma$ to denote a simple node, $\rho$ to denote a super-simple node, and $\tau$ to denote an appendage node when the type of the node is assumed \textit{a priori}. Otherwise, we use $\kappa$ to denote an arbitrary node. Note that $\iota$ and $o$ are super-simple nodes.  

Let $\rho_0, \rho_1, \ldots, \rho_q, \rho_{q+1}$ be the super-simple nodes, where $\rho_0 = \iota$ and $\rho_{q+1} = o$.
The super-simple nodes are totally ordered by the order of their appearance on any $\iota o$-simple path, and this ordering is independent of the $\iota o$-simple path.  We denote
an $\iota o$-simple path by 
\[
\iota \pathto \rho_1 \pathto \cdots \pathto \rho_q \pathto o  
\]
where $\rho_j\pathto\rho_{j+1}$ indicates a simple path from $\rho_j$ to $\rho_{j+1}$. 
The ordering of the super-simple nodes is denoted by  
\[
    \rho_0 \prec \rho_1 \prec \cdots \prec \rho_q \prec \rho_{q+1},
\]
and $\prec$ is a total ordering. 
The ordering $\prec$ extends to a partial ordering of simple nodes, as follows. 
If there exists a super-simple node $\rho$ and an $\iota o$-simple path such that 
\[
    \iota \pathto \sigma_1 \pathto \rho \pathto \sigma_2 \pathto o
\]
then the partial orderings 
\[
    \sigma_1 \prec \rho \qquad \rho \prec \sigma_2 \qquad \sigma_1 \prec \sigma_2
\]
are valid. In this partial ordering every simple node is comparable to every super-simple node 
but two simple nodes that lie between the same adjacent super-simple nodes need not be 
comparable.

We recall the definition of transitive (or strong) components of a network. Two nodes are {\em equivalent} if there is a path from one to the other and back. A {\em transitive component} is an equivalence class for this equivalence relation.

\begin{definition}\label{defn:super_appendage}
\begin{enumerate}[(a)]
\item Let $S$ be an $\iota o$-simple path.  The {\em complementary subnetwork} of $S$ is the network $C_S$ whose nodes are nodes that are not in $S$ and whose arrows are those that connect nodes in $C_S$.

\item An appendage node $\tau$ is {\em super-appendage} if for each $C_S$  containing $\tau$, the transitive component of $\tau$ in $C_S$ consists only of appendage nodes. \END
\end{enumerate}
\end{definition} 

Note that this definition of super-appendage leads to a slightly different, but equivalent, definition of homeostasis subnetwork to the one given in \cite{WHAG21}. 
However, this change enables us to define pattern networks in a more straightforward way (see Remark \ref{R:appendage}).

Now we start with the definition of structural subnetworks. 

\begin{definition} \label{D:structural_subnet}
Let $1\leq  j \leq q+1$ and $\rho_{j-1}\prec\rho_j$ be two consecutive super-simple nodes.
Then the $j^{th}$ \emph{simple subnetwork} $\LL_j''$, 
the $j^{th}$ {\em augmented simple subnetwork} $\LL_j'$, and
the $j^{th}$ \emph{structural subnetwork} $\LL_j$ 
are defined in four steps as follows.
\begin{enumerate}[(a)]
\item The $j^{th}$ \emph{simple subnetwork} $\LL_j''$ consists of simple nodes $\sigma$ where 
\[
    \rho_{j-1}\prec \sigma \prec \rho_j
\] 
and all arrows connecting these nodes. Note that $\LL_j''$ does not contain the super-simple nodes $\rho_{j-1}$ and $\rho_j$, and  $\LL_j''$ can be the empty set.

\item An appendage but not super-appendage node $\tau$ is {\em linked} to $\LL_j''$ if for some complementary subnetwork $C_S$ the transitive component of $\tau$ in $C_S$ is the union of $\tau$, nodes in $\LL_j''$, and non-super-appendage nodes. The set of $j^{th}$-{\em linked  appendage nodes} $T_j$ is the set of non-super-appendage nodes that are linked to $\LL_j''$.

\item The $j^{th}$ {\em augmented simple subnetwork} $\LL_j'$ is    
\[
 	\LL_j' =   \LL_j'' \cup T_j
\]
and all arrows connecting these nodes. 
\item The $j^{th}$ \emph{structural subnetwork} $\LL_j$ consists of the augmented simple subnetwork $\til{\LL}_j$ and adjacent super-simple nodes. That is 
\[
    \LL_j = \{\rho_{j-1}\} \cup \LL_j' \cup \{\rho_j\}
\]
and all arrows connecting these nodes. \END
\end{enumerate}
\end{definition}

\begin{definition}
Define $\sigma_1 \preceq \sigma_2$ if either $\sigma_1 \prec \sigma_2$, $\sigma_1 = \sigma_2$ 
is a super-simple node, or $\sigma_1$ and $\sigma_2$ are in the same simple subnetwork. \END
\end{definition}

Next we define the appendage subnetworks, which were defined in Section 1.7.2 of \cite{WHAG21} as any transitive component of the subnetwork consisting only of appendage nodes and the arrows between them.  

\begin{definition} \label{D:appendage_subnet}
An \emph{appendage subnetwork} $\AA$ is a transitive component of the subnetwork of super-appendage nodes. \END
\end{definition}

\begin{remark} \rm \label{R:appendage}
Wang \etal~\cite{WHAG21} define an appendage subnetwork as a transitive component of appendage nodes $\AA$ that satisfy the {\em `no cycle' condition}.
This condition is formulated in terms of the non-existence of a cycle between appendage nodes in $\AA$ and 
the simple nodes in $C_S$ for all simple $\iota o$-simple paths $S$.
Here, we define an appendage subnetwork as a transitive component of super-appendage nodes, which are defined in terms of transitive components with respect to $C_S$ for all simple $\iota o$-simple paths $S$. These two definitions are equivalent because two nodes belong to the same transitive component if and only if both nodes lie on a (simple) cycle.  \END
\end{remark}

Wang~\etal~\cite{WHAG21} show that each {\em homeostasis subnetwork} of $\GG$ is either structural 
(satisfies Definition \ref{D:structural_subnet}(d)) or  appendage (satisfies Definition~\ref{D:appendage_subnet}).

\subsection{Homeostasis Pattern Network}
\label{intro:pattern}

In this subsection we construct the \emph{homeostasis pattern network} $\PP$ associated with $\GG$ (see
Definition~\ref{D:HPN}), which serves to organize the homeostasis subnetworks and to clarify how each homeostasis subnetwork connects to the others.

The homeostasis pattern network is defined in the following steps.
First, we define the structural pattern network $\PP_\sS$ in terms of the structural subnetworks of $\GG$.
Second, we define the appendage pattern network $\PP_\AA$ in terms of the appendage subnetworks of $\GG$.
Finally, we define how the nodes in $\PP_\sS$ connect to nodes in $\PP_\AA$ and conversely.

\begin{definition}
The {\em structural pattern network} $\PP_\sS$ is the feedforward network whose nodes are 
the super-simple nodes $\rho_j$ and the \emph{backbone} nodes $\til{\LL}_j$, where $\til{\LL}_j$ is  
the augmented structural subnetwork $\LL_j'$ treated as a  single node.  The nodes and arrows of 
$\PP_\sS$ are given as follows.
\begin{equation} \label{backbone_e}
\iota = \rho_0 \to \til{\LL}_1 \to \rho_1 \to \til{\LL}_2 \to \cdots  \to \til{\LL}_{q+1} \to  \rho_{q+1} = o 
\end{equation}
\end{definition}

If a structural subnetwork ${\LL}$ consists of an arrow between two adjacent super-simple nodes (Haldane homeostasis type) then the corresponding augmented structural subnetwork ${\LL'}$ is the empty network; nevertheless the corresponding backbone node $\til{\LL}$ must be included in the structural pattern network $\PP_\sS$.

\begin{definition}
The {\em appendage pattern network} $\PP_\AA$ is the network whose nodes are the components $\til{\AA}$ in the condensation of the subnetwork of super-appendage nodes. 
Such a node $\til{\AA}$ is called an {\em appendage component}.
An arrow connects nodes $\til{\AA}_1$ and $\til{\AA}_2$ if and only if there are super-appendage nodes $\tau_1 \in \til{\AA}_1$ and $\tau_2 \in \til{\AA}_2$ such that $\tau_1 \to \tau_2$ in $\GG$. 
\END
\end{definition}

The {\em condensation} $\GG^c$ of a network $\GG$ is defined as follows. 
The vertices of $\GG^c$ are strong components (or transitive components) of $\GG$, and the edge in $\GG^c$ is present only if there exists at least one edge between the vertices of corresponding connected components.

To complete the homeostasis pattern network, we describe how the nodes in $\PP_\AA$  and the nodes in 
$\PP_\sS$ are 
connected. To do so, we take advantage of the feedforward ordering of the nodes in $\PP_\sS$ and the feedback ordering of the nodes in $\PP_\AA$. 

\begin{definition} 
A simple path from $\kappa_1$ to $\kappa_2$ is an \emph{appendage path} if some node on this path is an appendage node and every node on this path, except perhaps for $\kappa_1$ and $\kappa_2$, is an 
appendage node. \END
\end{definition}

\paragraph{How $\PP_\AA$ connects to $\PP_\sS$.} 
\begin{definition} \label{D:V_max}
Given a node $\til{\AA}\in\PP_\AA$, we construct a unique arrow from $\til{\AA}$ to the structural pattern 
network $\PP_\sS$ in two steps: 
\begin{enumerate}[(a)]
\item Consider the collection of nodes $\VV$ in $\PP_\sS$ for 
which there exists a simple node $\sigma \in \VV$ and appendage node $\tau \in \til{\AA}$, such that there is an  appendage path from $\tau$ to $\sigma$.

\item Let $\VV_{max}(\til\AA)$ be a maximal node in this collection, that is, the most downstream in $\PP_\sS$. It follows from \eqref{backbone_e} that $\VV_{max}$ is either a super-simple node $\rho_j$ or a backbone node $\til{\LL}_j$. Maximality implies that $\VV_{max}$ is uniquely defined. We then say that there is an arrow from $\til{\AA}$ to $\VV_{max} \in \PP_\sS$. \END
\end{enumerate}
\end{definition}

\paragraph{How $\PP_\AA$ is connected from $\PP_\sS$.} 
\begin{definition} \label{D:V_min}
Given a node $\til{\AA}\in\PP_\AA$ we choose uniquely an arrow from the structural pattern 
network $\PP_\sS$ to $\til{\AA}$ in two steps: 

\begin{enumerate}[(a)]
\item Consider the collection of nodes $\VV$ in $\PP_\sS$ 
for which there exists a simple node $\sigma \in \VV$ and appendage node $\tau \in \til{\AA}$, such that there is an  appendage path from $\sigma$ to $\tau$.

\item Let $\VV_{min}(\til{\AA})$ be a minimal node in this collection, that is, the most upstream node in $\PP_\sS$. Then $\VV_{min}$ is either a super-simple node $\rho_j$ or a backbone node $\til{\LL}_j$, and the minimality implies  uniqueness of $\VV_{min}$. We then say that there is an arrow from $\VV_{min} \in \PP_\sS$ to $\til{\AA}$. \END
\end{enumerate}
\end{definition}

Since we consider only core input-output networks, all appendage nodes are downstream from $\iota$ and upstream from $o$.
Hence, for any node $\til{\AA}\in\PP_\AA$, there always exist nodes $\VV_{min}, \VV_{max} \in \PP_\sS$ as mentioned above.

\begin{definition} \label{D:HPN}
The \emph{homeostasis pattern network} $\PP$ is the network whose nodes are the union of the nodes of the structural pattern network $\PP_\sS$ and the appendage pattern network $\PP_\AA$. The arrows of $\PP$ are the arrows of $\PP_\sS$, the arrows of $\PP_\AA$, and the arrows between $\PP_\sS$ and $\PP_\AA$ as described above. \END
\end{definition}

\begin{remark} \rm \label{R:ss_nocorrespond}
Note that the super-simple nodes in $\PP$  correspond to the super-simple nodes of $\GG$. 
Each super-simple node $\rho_j \in \GG$ (for $1\le j\le q$) belongs to exactly two structural subnetworks $\LL_{j-1}$ and $\LL_{j}$.  Thus they are not associated to a single homeostasis subnetwork of $\GG$.  \END
\end{remark}

It follows from Remark~\ref{R:ss_nocorrespond} that there is a correspondence between the homeostasis subnetworks of $\GG$ and the non-super-simple nodes of $\PP$.

\begin{remark} \label{R:correspond} \rm $ $
\begin{enumerate}[(a)]
\item Each structural subnetwork $\LL \subseteq \GG$  corresponds to the backbone node $\til{\LL} \in \PP_\sS$. Note that the augmented structural subnetworks $\LL' \subsetneq \LL$ are not homeostasis subnetworks.

\item Each appendage subnetworks $\AA \subset \GG$ corresponds to a appendage component $\til{\AA} \in \PP_\AA$.

\item For simplicity in notation we let $\VV_\sS$ denote a node in $\PP_\sS$.
Further we let $\til{\VV}$ denote a non-super-simple node of $\PP$ and $\VV$ denote its corresponding homeostasis subnetwork.
\END
\end{enumerate}
\end{remark}

\subsection{Homeostasis Induction}
\label{S:homeo_induce}

Here we define \emph{homeostasis induction} in the homeostasis pattern network $\PP$, which is critical to determining the homeostasis pattern `triggered' by each homeostasis subnetwork.

\begin{definition} \label{D:homeo_inducing}
Assume that the output node $o$ is homeostatic at  $(X_0,\II_0)$, that is, $x_o'(\II_0) = 0$ for some input value $\II_0$.

\begin{enumerate}[(a)]
\item We call the homeostasis subnetwork 
$\KK_\eta$  \emph{homeostasis inducing} if $h_{\KK_\eta} \equiv \det(B_\eta) = 0$ at $(X_0,\II_0)$. 

\item Homeostasis of a node $\kappa \in \GG$ is \emph{induced} by a homeostasis subnetwork $\KK$, denoted $\KK \Rightarrow \kappa$, if generically for $f$ in \eqref{eq:system}  $\kappa$ is homeostatic whenever $\KK$ is homeostasis inducing.

\item A homeostasis subnetwork $\KK$ \emph{induces} a subset of nodes $\NN$ ($\KK \Rightarrow \NN$), if $\KK \Rightarrow \kappa$ for each node $\kappa \in \NN \subset \GG$. \END
\end{enumerate}
\end{definition}

By definition, every homeostasis subnetwork $\KK$ induces homeostasis in the output node $o$, that is, $\KK \Rightarrow o$.

The main point of introducing the homeostasis pattern network $\PP$ is to
relate homeostatic induction between the set of homeostasis subnetworks of $\GG$ to induction between nodes in $\PP$.
In Definition \ref{D:homeo_inducing_pattern} bellow  we formalize this notion.
Hence, every node in a homeostasis pattern (which can be backbone or appendage) is induced by either a backbone node or an appendage node in the homeostasis pattern network $\PP$.

\begin{definition} \label{D:homeo_inducing_pattern}
Let $\til{\VV}_1, \til{\VV}_2 \in \PP$ be non-super-simple nodes and $\rho\in\PP$ be a super-simple node.
Let $\VV_1, \VV_2 \subset \GG$ be the corresponding homeostasis subnetworks to $\til{\VV}_1, \til{\VV}_2 \in \PP$.
We say that $\til{\VV}_1$ {\em induces} $\til{\VV}_2$, denoted by $\til{\VV}_1 \Rightarrow \til{\VV}_2$, if and only if $\VV_1 \Rightarrow \VV_2$. 
We say that $\til{\VV}_1$ {\em induces} $\rho$, denoted by $\til{\VV}_1 \Rightarrow \rho$, if and only if $\VV_1 \Rightarrow \rho$. \END
\end{definition}

We exclude super-simple nodes of $\PP$ from being `homeostasis inducing' because they are not associated to a homeostasis subnetwork of $\GG$ (see Remark \ref{R:ss_nocorrespond}).
However, when a backbone node $\til{\LL}_j\in \PP$ induces homeostasis on other nodes of $\PP$, it is the corresponding structural subnetwork $\LL_j$, with its two super-simple nodes $\rho_{j-1}, \rho_j$ that induce homeostasis.

\subsection{Characterization of Homeostasis  Patterns} 
\label{SS:CHP}

As explained before, the homeostasis pattern network $\PP$ allows us to characterize homeostasis patterns by reducing to four possibilities that are covered by Theorems \ref{thm:struct_to_struct} - \ref{thm:app_to_app}.

Structural homeostasis patterns are given by the following two theorems.

\begin{theorem}[Structural Homeostasis $\Rightarrow$ Structural Subnetworks]\label{thm:struct_to_struct}
A backbone node $\til{\LL}_j \in \PP_\sS$ induces
every node of the structural pattern network $\PP_S$ strictly downstream from $\til{\LL}_j$, but no other nodes of $\PP_S$.
\end{theorem}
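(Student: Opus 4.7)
The plan is to prove the theorem by a two-part argument, relying on the strategy of moving the output node and applying the determinant formula (Lemma~\ref{lem:irreducible}) and its factorization (Theorem~\ref{thm:irreducible}) to the resulting modified input-output network. For each potential target node $\kappa\in\GG$, I would work with the network $\GG(\kappa)$ obtained by declaring $\kappa$ to be the new output, and express $x_\kappa'(\II)$ via Cramer's rule as a ratio whose numerator is (up to sign and the nonzero factor $f_{\iota,\II}$) $\det(H_\kappa)$, where $H_\kappa$ is the homeostasis matrix of $\GG(\kappa)$. Infinitesimal homeostasis of $\kappa$ then amounts to $\det(H_\kappa)=0$ at the equilibrium, and I will study which irreducible factors of $\det(H_\kappa)$ coincide with $\det(B_j)$.

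For the positive direction (strictly downstream nodes are induced), I would first observe that since $\rho_j$ is super-simple in $\GG$, it lies on every $\iota o$-simple path, and in particular, for any $\kappa$ strictly downstream of $\til\LL_j$ in $\PP_\sS$ (that is, any node belonging to $\LL_m$ with $m>j$ or $\kappa=\rho_m$ with $m\ge j$), every $\iota\kappa$-simple path in $\GG(\kappa)$ still passes through $\rho_j$. Consequently, the structural decomposition of $\GG(\kappa)$ contains $\LL_1,\ldots,\LL_j$ unchanged as its first $j$ structural subnetworks, and the corresponding homeostasis blocks $B_1,\ldots,B_j$ remain as factors of the irreducible factorization of $\det(H_\kappa)$ granted by Theorem~\ref{thm:irreducible}. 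Hence $\det(B_j)=0$ forces $\det(H_\kappa)=0$, so $\kappa$ is homeostatic. Promoting this nodewise statement to induction of the corresponding nodes of $\PP_\sS$ (super-simple $\rho_m$, $m\ge j$, and all nodes of $\LL_m$, $m>j$, which together constitute the backbone nodes $\til\LL_m$, $m>j$) gives the first claim of the theorem.

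For the negative direction, I would argue that for any node $\kappa'$ of $\PP_\sS$ that is not strictly downstream of $\til\LL_j$ I can exhibit a node of $\GG$ whose derivative is generically nonzero when only $\det(B_j)$ vanishes. If $\kappa'=\rho_m$ with $m<j$, then $\GG(\rho_m)$'s structural subnetworks are exactly $\LL_1,\ldots,\LL_m$, so $B_j$ does not appear as a factor of $\det(H_{\rho_m})$, and generically $\det(H_{\rho_m})\ne 0$ when only $\det(B_j)=0$. If $\kappa'=\til\LL_m$ with $m<j$, I pick any node $\kappa$ in $\LL_m$ other than its endpoints and run the same argument with $\GG(\kappa)$, whose structural/appendage factors are disjoint from $\{B_j\}$. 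The delicate case is $\kappa'=\til\LL_j$ itself: I would pick $\kappa\in\LL_j$ with $\kappa\ne\rho_j$, note that $\GG(\kappa)$ splits $\LL_j$ so that the structural factor containing $\rho_{j-1}$ and $\kappa$ is a \emph{proper} restriction of $B_j$ and hence an irreducible polynomial distinct from $\det(B_j)$, so generically nonvanishing when $\det(B_j)=0$.

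The principal obstacle, and the reason the proof cannot just be a one-line invocation of Theorem~\ref{thm:irreducible}, lies in justifying rigorously the behaviour of the irreducible factorization of $\det(H_\kappa)$ under the operation of moving the output node: one must verify that $\GG(\kappa)$ remains (or can be trimmed to) a core input-output network, that super-simple and appendage structure is preserved in the expected way through $\rho_j$, and most importantly that the blocks $B_1,\ldots,B_j$ really reappear as irreducible factors of $\det(H_\kappa)$ with the same polynomial identities, while for $\kappa$ strictly inside $\LL_j$ the ``partial'' block that replaces $B_j$ is algebraically independent of $B_j$. I would address these points by invoking the combinatorial characterization of $\GG(\KK)$ developed in Section~\ref{sec:comb_charact}, which describes exactly how the structural/appendage decomposition changes when the output node is repositioned, so that the whole argument becomes a clean bookkeeping of factors.
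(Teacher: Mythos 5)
Your proposal is correct and follows essentially the same route as the paper: reposition the output node to form $\GG(\kappa)$, note that $\til{\LL}_j\Rightarrow\kappa$ exactly when $\LL_j$ survives as a homeostasis subnetwork of $\GG(\kappa)$ (so that $\det(B_j)$ is an irreducible factor of $\det(H_\kappa)$), and settle that survival question combinatorially via the preservation of the super-simple nodes $\rho_{j-1},\rho_j$ — which is precisely the content of Lemmas~\ref{lem:shared_super_simple} and \ref{lem:shared_structural} and Propositions~\ref{prop:struct_not_induce_struct} and \ref{prop:struct_induce_struct}. The only cosmetic difference is that the paper handles the case $\kappa\in\LL_j$, $\kappa\neq\rho_j$ uniformly with the upstream case (by showing $\rho_j$ fails to be super-simple in $\GG(\kappa)$, so $B_j$ simply does not appear as a factor) rather than via your ``proper restriction of $B_j$'' argument, and it does not need your stronger claim that the structural subnetworks of $\GG(\rho_m)$ are \emph{exactly} $\LL_1,\dots,\LL_m$.
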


See Figure \ref{fig:struct_to_struct} for an application of Theorem~\ref{thm:struct_to_struct}. 

\begin{figure}[H]
\centering
\includegraphics[width = .8\textwidth]{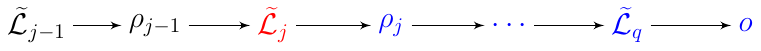}
\caption{An example of structural network induced by structural homeostasis. 
Suppose the backbone node $\til{\LL}_j$ in red is homeostasis inducing.  Then Theorem \ref{thm:struct_to_struct} 
implies that the blue nodes in the structural pattern network are all homeostatic.
}
\label{fig:struct_to_struct}
\end{figure}

\begin{theorem}[Structural Homeostasis $\Rightarrow$ Appendage Subnetworks]\label{thm:struct_to_app}
A backbone node $\til{\LL}_j \in \PP_\sS$ induces every
appendage component of $\PP_\AA$ whose $\VV_{min}$ (see Definition~\ref{D:V_min}) is strictly downstream, but no other nodes of $\PP_\AA$.
\end{theorem}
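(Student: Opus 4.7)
The plan is to work with the linearized system obtained by implicitly differentiating $f(X(\II),\II)=0$ at $\II_0$ (as in \eqref{e:1st_order}), and to combine it with Theorem~\ref{thm:struct_to_struct} together with an upstream-closure observation in the DAG $\PP_\AA$. The proof splits into the \emph{inducing} and \emph{non-inducing} directions.

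\emph{Inducing direction.} Suppose $\VV_{min}(\til{\AA})$ is strictly downstream of $\til{\LL}_j$ in $\PP_\sS$. The key combinatorial step is a monotonicity lemma: if $\til{\AA}' \to \til{\AA}$ in $\PP_\AA$, witnessed by a direct arrow $\tau_1' \to \tau$ with $\tau_1' \in \til{\AA}'$ and $\tau \in \til{\AA}$, then any appendage path $\sigma \pathto \tau_2'$ ending in $\til{\AA}'$ extends, via a path $\tau_2' \pathto \tau_1'$ inside the strongly connected component $\til{\AA}'$ and the arrow $\tau_1' \to \tau$, to an appendage path $\sigma \pathto \tau$ ending in $\til{\AA}$. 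Hence $\VV_{min}(\til{\AA}) \preceq \VV_{min}(\til{\AA}')$, so the family $\mathcal{S}$ of appendage components with $\VV_{min}$ strictly downstream of $\til{\LL}_j$ is upstream-closed in $\PP_\AA$. Let $U$ be the union of nodes of all components in $\mathcal{S}$. By upstream-closure, every arrow of $\GG$ entering $U$ from its complement originates at a non-appendage node $\sigma$; by the definition of $\VV_{min}$ this $\sigma$ lies in a $\PP_\sS$-node downstream of some $\VV_{min}(\til{\AA}')$ with $\til{\AA}' \in \mathcal{S}$, hence strictly downstream of $\til{\LL}_j$. Theorem~\ref{thm:struct_to_struct} then forces $x_\sigma'(\II_0) = 0$, so the linearized system restricted to $U$ reduces to $(J|_U)\,x'_U = 0$. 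Ordering the components by the DAG order of $\PP_\AA$ makes $J|_U$ block triangular with diagonal blocks $J|_{\til{\AA}'}$, each an irreducible factor of $\det H$ corresponding to the appendage homeostasis type of $\til{\AA}'$ and generically nonzero at a structural homeostasis point of type $\til{\LL}_j$. Hence $J|_U$ is invertible and $x'_U = 0$.

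\emph{Non-inducing direction.} Suppose $\VV_{min}(\til{\AA}) \preceq \til{\LL}_j$. Cramer's rule applied to the linearized system gives $x_\tau'(\II_0) = \pm f_{\iota,\II}\,\det H(\tau)/\det J$, where $H(\tau)$ is the homeostasis matrix of the network $\GG(\tau)$ with output repositioned to $\tau$. It suffices to show $\det(B_{\LL_j})$ is not an irreducible factor of $\det H(\tau)$, which yields generic $f$ with $\det(B_{\LL_j}) = 0$ but $\det H(\tau) \ne 0$ and therefore $x_\tau' \ne 0$. Applying Theorem~\ref{thm:irreducible} to $\GG(\tau)$ together with the combinatorial classification of irreducible factors developed in Section~\ref{sec:comb_charact}, the hypothesis produces a simple node $\sigma \in \VV_{min}(\til{\AA})$ and an appendage path $\sigma \pathto \tau$; concatenating with an $\iota \pathto \sigma$ segment gives an $\iota\tau$-path in $\GG(\tau)$ that bypasses the super-simple node $\rho_j$ bounding $\LL_j$ on the right (the appendage portion avoids all super-simple nodes by definition). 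Consequently $\rho_j$ ceases to be super-simple in $\GG(\tau)$, $\LL_j$ is no longer a structural subnetwork of $\GG(\tau)$, and $\det(B_{\LL_j})$ does not appear among the irreducible factors of $\det H(\tau)$.

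The main obstacle is the non-inducing direction: rigorously showing that $\LL_j$ loses its structural status in $\GG(\tau)$ precisely when $\VV_{min}(\til{\AA}) \preceq \til{\LL}_j$ requires tracking how the super-simple/simple/appendage classifications are redistributed under output repositioning, which is the content of Section~\ref{sec:comb_charact}. The inducing direction, by contrast, is comparatively clean once the upstream-closure of $\mathcal{S}$ in $\PP_\AA$ is established and Theorem~\ref{thm:struct_to_struct} is invoked.
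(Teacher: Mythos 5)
Your non-inducing direction is essentially the paper's argument (Proposition \ref{prop:struct_not_induce_app}): the hypothesis $\VV_{min}(\til{\AA}) \preceq \til{\LL}_j$ yields a simple node $\sigma \prec \rho_j$ with an appendage path $\sigma \pathto \tau$, so $\rho_j$ fails to be super-simple in $\GG(\tau)$ (Lemma \ref{lem:shared_super_simple}), $\LL_j$ is not a homeostasis subnetwork of $\GG(\tau)$, and Lemma \ref{L:induce_homeo} together with the determinant formula for $\GG(\tau)$ finishes. Your inducing direction, however, takes a genuinely different route. The paper works one node $\tau \in \AA$ at a time and shows that $\LL_j$ \emph{persists} as a structural subnetwork of the repositioned network $\GG(\tau)$ (Lemmas \ref{lem:shared_super_simple} and \ref{lem:shared_structural}, packaged as Proposition \ref{prop:struct_induce_app}), so $\det(B_{\LL_j})$ divides $\det H(\tau)$ and its vanishing forces $x_\tau'=0$. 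You instead argue globally on the linearized equilibrium equations: the qualifying appendage components form an upstream-closed family in $\PP_\AA$ (your monotonicity of $\VV_{min}$ along arrows of $\PP_\AA$ is correct), all external inputs to their union $U$ are already homeostatic by Theorem \ref{thm:struct_to_struct}, and $J|_U$ is block-triangular with generically nonvanishing diagonal blocks, whence $x'_U = 0$. This is attractive --- it produces the entire induced appendage pattern in one pass and avoids Lemma \ref{lem:shared_structural} altogether --- though it leans on the identification (from \cite{WHAG21}) of $\det\bigl(J|_{\AA'}\bigr)$ with the irreducible factor of $\det H$ attached to the appendage subnetwork $\AA'$.

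The one step that fails as written is the claim that every arrow of $\GG$ entering $U$ from its complement originates at a non-appendage node. Appendage nodes of $\GG$ that are not super-appendage --- the linked sets $T_k$ of Definition \ref{D:structural_subnet}(b) --- belong to backbone nodes of $\PP_\sS$, not to $\PP_\AA$, and such a node $\ell \in T_k$ may have an arrow into $U$. Your appeal to Definition \ref{D:V_min} does not cover this case, since that definition quantifies only over \emph{simple} nodes $\sigma \in \VV$. The gap is repairable: by Lemma \ref{lem:linked_app} there is a super-simple-avoiding cycle joining $\ell$ to a simple node of $\LL_k''$, which concatenates with the arrow $\ell \to U$ to give an appendage path from a simple node of $\LL_k''$ into the target component; minimality of $\VV_{min}$ then forces $\til{\LL}_k$ to be strictly downstream of $\til{\LL}_j$, and Theorem \ref{thm:struct_to_struct} (which induces all of $\LL_k'$, including $T_k$) gives $x'_\ell = 0$. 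You should add this case explicitly before asserting $(J|_U)\,x'_U = 0$.
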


See Figure  \ref{fig:struct_to_app} for an application of Theorem~\ref{thm:struct_to_app}.  

\begin{figure}[!htb]
\centering
\includegraphics[width = .33\textwidth]{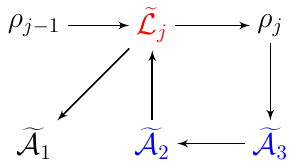}
\caption{An example of appendage subnetworks induced by structural homeostasis.  
Suppose the backbone node $\til{\LL}_j$ in red is homeostasis inducing. Then by Theorem \ref{thm:struct_to_app} and the fact that the super-simple node $\rho_j$ is strictly downstream from $\til{\LL}_j$ the blue appendage components downstream from $\rho_j$ are homeostatic.}
\label{fig:struct_to_app}
\end{figure}

The appendage homeostasis patterns are characterized by the following two theorems.

\begin{theorem}[Appendage Homeostasis $\Rightarrow$ Structural Subnetworks] \label{thm:app_to_struct}
An appendage component $\til{\AA} \in \PP_\AA$ induces every super-simple node of $\PP_S$ downstream 
from $\VV_{max}(\til{\AA})$ (see Definition~\ref{D:V_max}), but no other super-simple nodes. 
Further, an appendage component $\til{\AA} \in \PP_\AA$ induces a backbone node $\til{\LL}_j$ if and only if 
 $\til{\LL}_j$ is strictly downstream from $\VV_{max}(\til{\AA})$.
\end{theorem}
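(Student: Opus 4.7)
The plan is to translate the statement into one about irreducible factorizations of determinants of minors of the Jacobian $J$. Applying Cramer's rule to the linearized equilibrium equations $J\,X'(\II_0) = -f_{\iota,\II}\, e_\iota$, one has $x_\kappa'(\II_0) = 0$ if and only if the $(\iota,\kappa)$-minor $H_\kappa$ of $J$ vanishes at $(X_0,\II_0)$. Crucially, $H_\kappa$ is precisely the homeostasis matrix of the repositioned network $\GG(\kappa)$ obtained by moving the output from $o$ to $\kappa$. Under the hypothesis $\det(B_{\til{\AA}})=0$, deciding whether $\til{\AA}$ induces $\kappa$ therefore reduces to deciding whether the irreducible block $B_{\til{\AA}}$ occurs among the irreducible factors of $\det(H_\kappa)$ in the factorization of Theorem~\ref{thm:irreducible}. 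The principal tool is the combinatorial classification of these factors, to be developed in Section~\ref{sec:comb_charact}, in terms of the super-simple/simple/super-appendage decomposition of $\GG(\kappa)$.

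For the forward direction I first consider a super-simple node $\rho$ of $\GG$ that is downstream from $\VV_{max}(\til{\AA})$. The idea is that the simple node $\sigma$ witnessing an appendage path $\tau \pathto \sigma$ with $\tau\in\til{\AA}$ is upstream of $\rho$, so both $\sigma$ and the path survive in $\GG(\rho)$. Moreover, every $\iota\rho$-simple path of $\GG(\rho)$ extends via $\rho\pathto o$ to an $\iota o$-simple path of $\GG$, so the nodes of $\til{\AA}$ retain their super-appendage status in $\GG(\rho)$. Hence $\til{\AA}$ remains a transitive component of super-appendage nodes of $\GG(\rho)$, the block $B_{\til{\AA}}$ is an irreducible factor of $\det(H_\rho)$, and $\det(B_{\til{\AA}})=0$ forces $x_\rho'(\II_0)=0$. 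For a backbone $\til{\LL}_j$ strictly downstream of $\VV_{max}(\til{\AA})$, the same persistence argument applied to each node $\kappa\in\LL_j$ yields $x_\kappa'(\II_0)=0$ for all such $\kappa$; by Definition~\ref{D:homeo_inducing_pattern} this gives $\til{\AA}\Rightarrow\til{\LL}_j$.

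For the converse I would show that if $\rho$ is strictly upstream of $\VV_{max}(\til{\AA})$, or if $\til{\LL}_j$ fails to be strictly downstream, then $B_{\til{\AA}}$ does not occur among the irreducible factors of $\det(H_\rho)$ (respectively of $\det(H_\kappa)$ for a suitably chosen $\kappa\in\LL_j$, for example a super-simple endpoint of $\LL_j$ or a simple node of $\LL_j$ upstream of the witnessing $\sigma$). The mechanism is that in $\GG(\rho)$ the relevant super-appendage transitive component is reorganized: either some nodes of $\til{\AA}$ cease to admit an appendage path to a simple node of $\GG(\rho)$, or the witnessing path targets a node that has been pruned from the core of $\GG(\rho)$. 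Treating the partial derivatives $f_{j,\ell}$ as algebraically independent, the absence of $B_{\til{\AA}}$ as a divisor of $\det(H_\rho)$ then implies via genericity that $\det(B_{\til{\AA}})=0$ does not force $\det(H_\rho)=0$. The main obstacle is precisely this absence claim: one must verify that $B_{\til{\AA}}$ is genuinely missing from the new factorization rather than being merged with or split across other irreducible blocks. This delicate graph-theoretic comparison of the super-appendage decompositions of $\GG$ and $\GG(\rho)$ is the content of the combinatorial framework developed in Section~\ref{sec:comb_charact}, which I would invoke to conclude the argument.
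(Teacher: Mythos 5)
Your proposal follows essentially the same route as the paper: reduce $\til{\AA}\Rightarrow\kappa$ to whether $\AA$ survives as an appendage (homeostasis) subnetwork of the repositioned network $\GG(\kappa)$, i.e.\ whether $\det(B_{\til{\AA}})$ divides $\det(H_\kappa)$, and then settle that question with the combinatorics of Section~\ref{sec:comb_charact} --- which is exactly how the paper proceeds, via Lemma~\ref{L:induce_homeo} and Propositions~\ref{prop:app_to_struct}, \ref{prop:app_not_struct} and \ref{prop:app_not_super_simple}, resting in turn on Lemmas~\ref{L:A_induces}, \ref{L:kappa_prec_AA} and \ref{L:kappa_equal_AA}. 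The one place your sketch understates the work is the forward direction: showing that the nodes of $\AA$ remain appendage in $\GG(\rho)$ is not sufficient, since one must also rule out new simple-cycle linkages and mergers of transitive components so that $\AA$ persists as a full super-appendage component of $\GG(\rho)$ --- precisely the remaining parts of the proof of Lemma~\ref{L:A_induces}, which you correctly defer to.
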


See Figure \ref{fig:app_to_struct} for an application of Theorem~\ref{thm:struct_to_struct}. 

\begin{figure}[!htb]
\centering
\includegraphics[width = .5\textwidth]{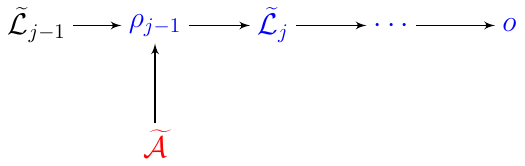}
\caption{An example of structural subnetworks induced by appendage homeostasis.
Suppose the appendage component $\til{\AA}$ in red is homeostasis inducing. 
Since $\til{\AA}$ connects to the super-simple node $\rho_{j-1}$, then by Theorem \ref{thm:app_to_struct} the blue nodes in the structural pattern network are homeostatic.}
\label{fig:app_to_struct}
\end{figure}

Please recall Definitions~\ref{D:V_max} and \ref{D:V_min} before reading the next theorem.

\begin{theorem}[Appendage Homeostasis $\Rightarrow$ Appendage Subnetworks]\label{thm:app_to_app}
An appendage component $\til{\AA}_i \in \PP_\AA$ induces an appendage component $\til{\AA}_j \in \PP_\AA$ if and only if 
$\til{\AA}_i$ is strictly upstream from $\til{\AA}_j$ and every path from $\til{\AA}_i$ to $\til{\AA}_j$ in $\PP$ 
 contains a super-simple node $\rho$ satisfying $\VV_{max} (\til{\AA}_i) \preceq \rho \preceq \VV_{min} (\til{\AA}_j)$.   
\end{theorem}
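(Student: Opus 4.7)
The plan is to prove both directions by analyzing the linearized implicit equations for the derivatives $\kappa'(\II_0)$ of the equilibrium family (as in \eqref{e:1st_order}), combined with the homeostasis consequences already established by Theorems \ref{thm:app_to_struct}, \ref{thm:struct_to_struct}, and \ref{thm:struct_to_app}. Throughout, assume the appendage component $\til{\AA}_i$ is the homeostasis-inducing block, i.e.\ $\det(B_i)(X_0,\II_0)=0$ while all other diagonal blocks are generically nonsingular, so we work with the linear system for the $\kappa'$ obtained from the Jacobian.

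For the sufficient direction, assume $\til{\AA}_i$ is strictly upstream from $\til{\AA}_j$ and that every path in $\PP$ from $\til{\AA}_i$ to $\til{\AA}_j$ contains some super-simple node $\rho$ with $\VV_{max}(\til{\AA}_i)\preceq\rho\preceq\VV_{min}(\til{\AA}_j)$. By Theorem \ref{thm:app_to_struct} every such $\rho$ satisfies $\rho'(\II_0)=0$, and by Theorems \ref{thm:struct_to_struct} and \ref{thm:struct_to_app} the backbone nodes and appendage components between $\VV_{max}(\til{\AA}_i)$ and $\VV_{min}(\til{\AA}_j)$ in $\PP$ are also homeostatic. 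I would then restrict the linearized admissible system to the nodes of $\til{\AA}_j$; its right-hand side is a sum of terms $f_{\tau,\kappa}\,\kappa'$ ranging over edges $\kappa\to\tau$ with $\tau\in\til{\AA}_j$. Each such $\kappa$ is either (i) internal to $\til{\AA}_j$, (ii) a simple node of $\VV_{min}(\til{\AA}_j)$ or an adjacent super-simple node, which is homeostatic by the above, or (iii) an appendage node lying on a $\PP$-path from $\til{\AA}_i$ to $\til{\AA}_j$. The hypothesis forces case (iii) to be cut off by a homeostatic separating super-simple node, so by an induction along paths in $\PP$ the contribution vanishes. Thus the driving term for the $\til{\AA}_j$-subsystem is zero, and since $B_j$ is generically invertible, every $\tau\in\til{\AA}_j$ satisfies $\tau'(\II_0)=0$.

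For the necessary direction, I would prove the contrapositive. If $\til{\AA}_i$ is not strictly upstream of $\til{\AA}_j$ in $\PP$, there is no chain of induced homeostasis that can reach $\til{\AA}_j$ from $\til{\AA}_i$, and a generic admissible vector field with $\det(B_i)=0$ produces nonzero $\tau'$ for some $\tau\in\til{\AA}_j$. The more delicate case is when a path in $\PP$ from $\til{\AA}_i$ to $\til{\AA}_j$ avoids any super-simple node $\rho$ with $\VV_{max}(\til{\AA}_i)\preceq\rho\preceq\VV_{min}(\til{\AA}_j)$: along such a path one can propagate nonzero derivatives step by step through the appendage components, since none of the intermediate super-simple nodes is forced homeostatic by Theorem \ref{thm:app_to_struct}. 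I would then show that the cumulative contribution arriving at $\til{\AA}_j$ along this path is generically nonzero, and since $B_j$ is invertible this produces a nonzero $\tau'$ for some $\tau\in\til{\AA}_j$.

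The main obstacle will be the necessary direction: I must exhibit (or argue generically for) an admissible vector field that simultaneously zeroes $\det(B_i)$ while keeping some $\tau'$ in $\til{\AA}_j$ nonzero, despite the fact that the admissibility constraints couple the partial derivatives $f_{j,\ell}$ nontrivially across the network. I expect the cleanest route is to treat the nonzero $f_{j,\ell}$ as independent symbolic parameters, express $\tau'$ and $\det(B_i)$ as polynomials in these symbols using Cramer's rule block-by-block, and verify via the combinatorics of the pattern network $\PP$ that the polynomial for $\tau'$ does not lie in the ideal generated by $\det(B_i)$ unless the stated path condition holds. This algebraic reduction is where the structural organization of $\PP$ — specifically the uniqueness of $\VV_{max}$ and $\VV_{min}$ from Definitions \ref{D:V_max} and \ref{D:V_min} — plays the decisive role.
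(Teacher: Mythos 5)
Your overall strategy — propagate zeros through the linearized equilibrium equations — is the ``bare hands'' method the paper uses only for the worked example and explicitly abandons in general, and as written it has two genuine gaps. First, in the sufficiency direction your case (iii) is circular: to conclude that the appendage nodes feeding into $\til{\AA}_j$ are homeostatic you need to know that the intermediate appendage components on paths from $\til{\AA}_i$ to $\til{\AA}_j$ (and the third-party components $\til{\AA}_k$ with arrows into $\til{\AA}_j$) are themselves induced by $\til{\AA}_i$ — but that is precisely an instance of Theorem \ref{thm:app_to_app}, the statement being proved. Citing Theorems \ref{thm:struct_to_struct} and \ref{thm:struct_to_app} does not close this loop: those theorems describe what a backbone node induces \emph{when its block $\det(B)$ vanishes}, whereas here the backbone nodes are merely homeostatic while their blocks are nonsingular; the induction relation $\Rightarrow$ of Definition \ref{D:homeo_inducing} is not transitive in the way your chaining assumes. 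A correct version of your argument would require a simultaneous induction over all of $\PP$ with a carefully stated invariant, which you have not set up. Second, the necessity direction is not a proof at all: ``no chain of induced homeostasis can reach $\til{\AA}_j$'' does not preclude homeostasis (in case (b) of the paper's example the node $\iota$ becomes homeostatic through a backward constraint from the singular block, not through any forward chain), and your proposed resolution — verify that the polynomial for $\tau'$ does not lie in the ideal generated by $\det(B_i)$ unless the path condition holds — is exactly the content that must be proved, deferred rather than executed.

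The paper's route is different and avoids the linearized system entirely. The key step you are missing is Lemma \ref{L:induce_homeo}: $\AA_i \Rightarrow \tau$ if and only if $\AA_i$ is a homeostasis (here, appendage) subnetwork of the re-targeted network $\GG(\tau)$, which by Theorem \ref{thm:irreducible} applied to $\GG(\tau)$ is equivalent to your ideal-membership question $\det(B_i) \mid \det\bigl(H_{\GG(\tau)}\bigr)$ but is a purely combinatorial condition. The theorem then follows from Lemmas \ref{L:app_connect_order}--\ref{L:kappa_equal_AA}, which characterize when $\AA_i$ survives as an appendage subnetwork of $\GG(\tau)$ in terms of $\sigma^u$, $\sigma^d$ and super-simple separators, packaged as Propositions \ref{prop:app_to_app} (sufficiency) and \ref{prop:app_not_itself}, \ref{prop:app_no_path}, \ref{prop:app_bad_path} (necessity). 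If you want to salvage your approach, you should at minimum prove Lemma \ref{L:induce_homeo} and then do the combinatorics on $\GG(\tau)$; the determinant formula does the ``generic non-cancellation'' bookkeeping that a node-by-node propagation cannot.
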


See Figure \ref{fig:app_to_app} for an application of Theorem~\ref{thm:app_to_app}. 
\begin{figure}[!htb] 
\centering
\includegraphics[width=.45\textwidth]{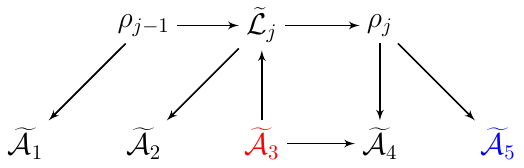}
\caption{
An example of appendage subnetworks induced by appendage homeostasis. 
Suppose the appendage component $\til{\AA}_3$ in red is homeostasis inducing. 
Since $\til{\AA}_3$ has only one path to the blue appendage component $\til{\AA}_5$ containing the super-simple node $\rho_j$, then by Theorem \ref{thm:app_to_app} $\til{\AA}_5$ is homeostatic,  but no other appendage subnetwork is homeostatic.}
\label{fig:app_to_app}
\end{figure}

\begin{example} \em \label{E:example8_pattern}

We consider homeostasis patterns for the admissible systems in \eqref{e:admissible} obtained from Figure~\ref{F:example8}.  Specifically, we show how the theorems in this section lead to the determination of the homeostasis patterns that were derived by direct calculation from the equations \eqref{e:1st_order}. 
The corresponding homeostasis pattern network $\PP$ is shown in Figure \ref{F:example8b}.
The answer is listed in Table~\ref{T:example8}.

\medskip

\noindent
\textbf{Case (a): }  $f_{\tau_3, \tau_3} = 0$ at $\II_0$:
homeostasis is induced by the node $\til{\AA}_3$ of $\PP_{\AA}$. \\
Theorem \ref{thm:app_to_struct} shows 
$\til{\AA}_3$ induces $\{ o \}$, which is the only super-simple node of $\PP_\sS$ downstream from $\VV_{max}(\til{\AA}_3)$.
And $\til{\AA}_3$ induces no backbone node.
Theorem \ref{thm:app_to_app} shows 
$\til{\AA}_3$ induces no appendage component.
Therefore, in this case the homeostasis pattern is $\{ o \}$.

\medskip

\noindent
\textbf{Case (b): }  $f_{\tau_1, \tau_1} = 0$ at $\II_0$:
homeostasis is induced by the node $\til{\AA}_1$ of $\PP_{\AA}$. \\
Theorem \ref{thm:app_to_struct} shows 
$\til{\AA}_1$ induces $\{ \iota, o \}$, which are the super-simple nodes of $\PP_\sS$ downstream from $\VV_{max}(\til{\AA}_1)$.
Also $\til{\AA}_1$ induces $\{ \til{\LL}_1 \}$, which is the backbone node of $\PP_\sS$ downstream from $\iota$.
Theorem \ref{thm:app_to_app} shows 
$\til{\AA}_1$ induces $\{ \til{\AA}_2, \til{\AA}_3 \}$, which are the nodes of $\PP_\AA$ downstream from $\til{\AA}_1$ and each path contains the super-simple node $o$ with $\VV_{max}(\til{\AA}_1) \preceq o \preceq \VV_{min}(\til{\AA}_2)$ or $\VV_{min}(\til{\AA}_3)$. Therefore, in this case the homeostasis pattern is $\{ \iota, \ \sigma, \ o, \ \tau_2, \ \tau_3 \}$.

\medskip

\noindent
\textbf{Case (c): }  $f_{\tau_2, \tau_2} = 0$ at $\II_0$:
homeostasis is induced by the node $\til{\AA}_2$ of $\PP_{\AA}$. \\
Theorem \ref{thm:app_to_struct} shows 
$\til{\AA}_2$ induces $\{ o \}$, which is the super-simple nodes of $\PP_\sS$ downstream from $\VV_{max}(\til{\AA}_2)$.
And $\til{\AA}_2$ induces no backbone node.
Theorem \ref{thm:app_to_app} shows 
$\til{\AA}_2$ induces $\{ \til{\AA}_3 \}$, which is the node of $\PP_\AA$ downstream from $\til{\AA}_2$ and each path contains the super-simple node $o$ with $\VV_{max}(\til{\AA}_2) \preceq o \preceq \VV_{min}(\til{\AA}_3)$.
Therefore, in this case the homeostasis pattern is $\{ o, \ \tau_3 \}$.

\medskip

\noindent
\textbf{Case (d): }  
$\det \begin{pmatrix}
f_{\sigma, \iota} & f_{\sigma, \sigma} \\
f_{o, \iota} & f_{o, \sigma}
\end{pmatrix} = 0$ at $\II_0$:
homeostasis is induced by node $\til{\LL}_1$ of $\PP_{\sS}$. \\
Theorem \ref{thm:struct_to_struct} shows 
$\til{\LL}_1$ induces $\{ o \}$, which is the node of $\PP_\sS$ downstream from $\til{\LL}_1$.
Theorem \ref{thm:struct_to_app} shows 
$\til{\LL}_1$ induces $\{ \til{\AA}_2, \til{\AA}_3 \}$, whose $\VV_{min}$ are strictly downstream from $\til{\LL}_1$.
Therefore, in this case the homeostasis pattern is $\{ o, \ \tau_2, \ \tau_3 \}$.
\end{example}

\begin{figure}[!htb]
\centering
\includegraphics[width =.5\textwidth]{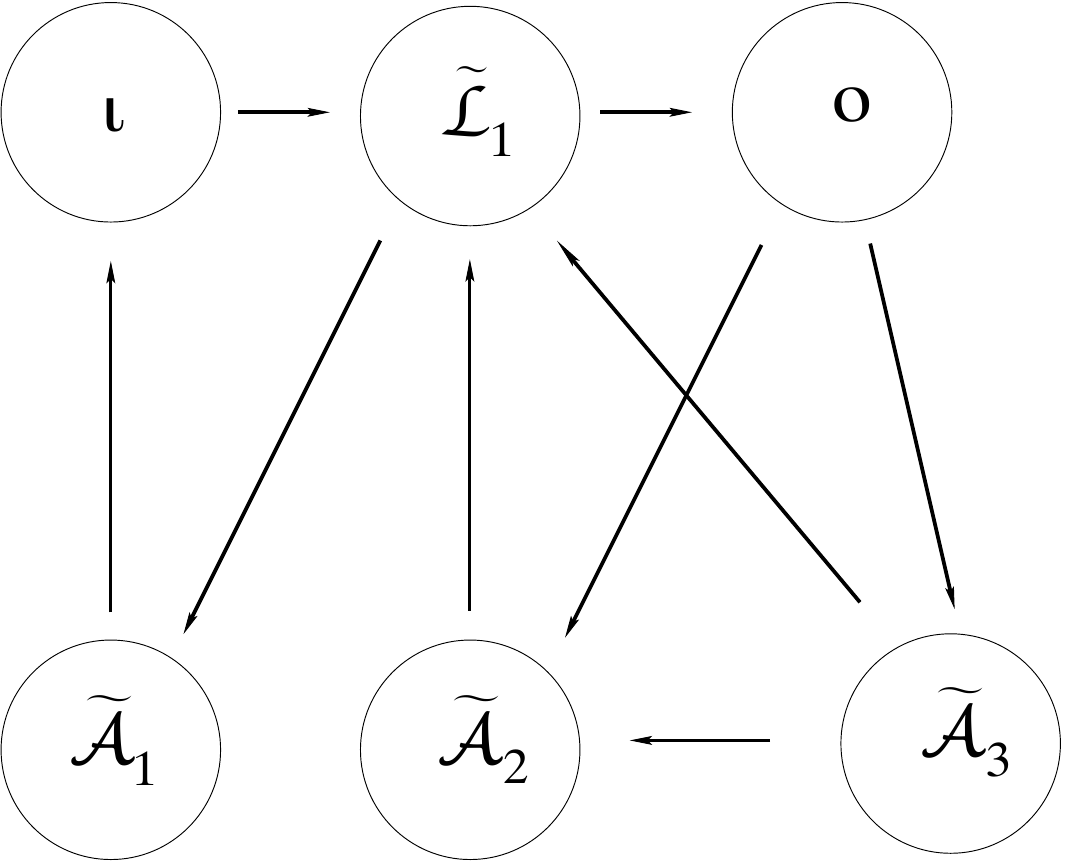}
\caption{Pattern network $\PP$ obtained from input-output network $\GG$ shown in Figure~\ref{F:example8}. \label{F:example8b}}
\end{figure}

\section{Overview of Proofs of Theorems \ref{thm:struct_to_struct}-\ref{thm:app_to_app}}
\label{sec:overview}

The theorems that we characterize here give the homeostasis patterns
$\til{\VV}_1 \Rightarrow \til{\VV}_2$ or $\til{\VV}_1 \not\Rightarrow \til{\VV}_2$ between any 
two nodes $\til{\VV}_1,\til{\VV}_2$ in the homeostasis pattern network $\PP$. Their proofs are done 
by transferring to the pattern network $\PP$ the corresponding results on the 
input-output network $\GG$. These proofs are achieved in three steps.

The first step solves the following.  Given a node $\kappa\in\GG$ and a homeostasis 
subnetwork $\KK\subset \GG$, determine whether $\KK \Rightarrow \kappa$
for each of the four possibilities:
\begin{enumerate}[(1)]
\item $\KK$ structural and $\kappa$ simple,
\item $\KK$ structural and $\kappa$ appendage,
\item $\KK$ appendage and $\kappa$ simple,
\item $\KK$ appendage and $\kappa$ appendage.
\end{enumerate}

In the first step the proof proceeds by fixing a node $\kappa\in \GG$ and considering the input-output network $\GG(\kappa)$ defined as the input-output network $\GG$ with input node $\iota$ and output node $\kappa$ (see Definition \ref{D:sigma(kappa)}).

The following result describes the role of the network $\GG(\kappa)$ 
in showing that $\KK \Rightarrow \kappa$ for some subnetwork $\KK$.

\begin{lemma} \label{L:induce_homeo}
Let $\KK$ be a homeostasis subnetwork of $\GG$.  Then $\KK$ induces $\kappa$ if and only if 
$\KK$ is a homeostasis subnetwork of $\GG(\kappa)$.
\end{lemma}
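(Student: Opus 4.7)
The plan is to apply the determinant formula of Lemma \ref{lem:irreducible} to the auxiliary input-output network $\GG(\kappa)$, whose output node is $\kappa$ instead of $o$. First I would compute $x_\kappa'(\II)$ by implicit differentiation of $f(X(\II),\II)=0$. Since $\II$ appears only in $f_\iota$, one obtains $J\cdot X'(\II) = -(f_{\iota,\II},0,\ldots,0)^T$, and Cramer's rule then yields
\[
x_\kappa'(\II) \;=\; \pm\,\frac{\det(H(\kappa))}{\det(J)}\,f_{\iota,\II},
\]
where $H(\kappa)$ is the homeostasis matrix of $\GG(\kappa)$, namely the submatrix of $J$ obtained by deleting the row of $\iota$ and the column of $\kappa$ (the sign is the permutation sign of the reordering that places $\kappa$ in last position). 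Since $\det(J)\neq 0$ at a hyperbolic equilibrium and $f_{\iota,\II}$ is generically nonzero, $\kappa$ is generically infinitesimally homeostatic at $\II_0$ if and only if $\det(H(\kappa))=0$ there.

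Next I would invoke Theorem \ref{thm:irreducible} applied to $\GG(\kappa)$, obtaining an irreducible factorization $\det(H(\kappa)) = \det(B_1(\kappa))\cdots\det(B_{m(\kappa)}(\kappa))$ whose factors correspond bijectively to the homeostasis subnetworks of $\GG(\kappa)$. This reduces Lemma \ref{L:induce_homeo} to tracking how $\det(B_\KK)$, the irreducible polynomial associated to $\KK$ as a homeostasis subnetwork of $\GG$, sits inside the factorization of $\det(H(\kappa))$.

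The $(\Leftarrow)$ direction is then immediate: if $\KK$ is a homeostasis subnetwork of $\GG(\kappa)$, then $\det(B_\KK)$ is one of the irreducible factors of $\det(H(\kappa))$, so $\det(B_\KK)=0$ forces $\det(H(\kappa))=0$ and hence $x_\kappa'(\II_0)=0$, i.e., $\KK \Rightarrow \kappa$. For the $(\Rightarrow)$ direction, suppose $\KK \Rightarrow \kappa$. Then, generically in the admissible coefficients, the zero set of $\det(B_\KK)$ is contained in the zero set of $\det(H(\kappa))$. Since $\det(B_\KK)$ is irreducible in the polynomial ring generated by the admissible entries $\{f_{j,\ell}\}$, a Nullstellensatz-type argument forces $\det(B_\KK)$ to divide $\det(H(\kappa))$, and unique factorization then identifies $\det(B_\KK)$ (up to a nonzero scalar) with one of the $\det(B_i(\kappa))$. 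Hence $\KK$ appears as a block in the irreducible block triangularization of $H(\kappa)$, i.e., $\KK$ is a homeostasis subnetwork of $\GG(\kappa)$.

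The main obstacle is the polynomial divisibility step in the reverse direction. One must carefully set up the correct polynomial ring, namely the ring whose indeterminates are precisely the admissible entries $f_{j,\ell}$ corresponding to arrows of $\GG$, and verify that the informal \emph{generically} clause in Definition \ref{D:homeo_inducing}(b) translates into the algebro-geometric containment of vanishing loci needed to invoke the Nullstellensatz. A secondary technical point is to confirm that the equality $\det(B_\KK)=c\cdot\det(B_i(\kappa))$ actually forces the matching block to carry the same nodes and arrows as $\KK$, rather than merely producing the same polynomial by coincidence; this should follow because the irreducible factor $\det(B_\KK)$ faithfully records the partial derivatives $f_{j,\ell}$ and hence the arrows and nodes of $\KK$.
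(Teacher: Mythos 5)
Your proposal is correct and follows essentially the same route as the paper, which simply invokes Definition \ref{D:homeo_inducing} together with Theorem \ref{thm:irreducible} applied to $\GG(\kappa)$; you have filled in the determinant formula for $x_\kappa'$ and the irreducibility/divisibility argument that the paper leaves implicit. The Nullstellensatz step you flag is exactly the content hidden in the word ``generically'' in Definition \ref{D:homeo_inducing}(b), and your treatment of it is sound.
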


\begin{proof}
The lemma follows by recalling Definition \ref{D:homeo_inducing} of homeostasis induction
and by applying Theorem \ref{thm:irreducible} to $\GG(\kappa)$.
\end{proof}

The proofs of statements (1) - (4) above reduce to purely combinatorial statements.  Specifically
given a homeostasis subnetwork $\KK$ of $\GG$ and a node $\kappa\in\GG$, solve the following
two problems.
\paragraph{\bf Determine when $\KK$ is a structural subnetwork of $\GG(\kappa)$.} The answer is given by Lemmas \ref{lem:shared_super_simple} and \ref{lem:shared_structural} in Section \ref{S:structural_subnetworks}.

\paragraph{\bf Determine when $\KK$ is an appendage subnetwork of $\GG(\kappa)$.} The answer is given by Lemmas \ref{L:app_connect_order} - \ref{L:kappa_equal_AA} in Section \ref{S:appendage_subnetworks}.

\medskip

At this step in the proof we work directly with the input-output networks $\GG$ and $\GG(\kappa)$.

\medskip

The second step consists of lumping together the induced/non-induced nodes $\kappa$ into the corresponding homeostasis subneworks.   That is, if $\KK \Rightarrow \kappa$ then $\KK \Rightarrow \KK'$, 
where $\KK \neq \KK'$ and  
$\kappa\in\KK'$.
This is done in several propositions in Sections
\ref{sec:struct_pattern} and \ref{sec:app_pattern}.

\medskip

The third step consists of transferring the relations of induction/non-induction between homeostasis subnetworks to relations between the nodes of the homeostasis pattern network $\PP$.
Since $\PP$ is obtained by collapsing certain subsets of nodes of $\GG$, this step follows 
automatically from the previous step.

\section{Combinatorial Characterization of the Input-Output Networks $\GG(\kappa)$}
\label{sec:comb_charact}

\subsection{The Input-Output Networks $\GG(\kappa)$}

\begin{definition}  \label{D:G(kappa)}
Let $\GG$ be an input-output network with input node $\iota$ and output node $o$, and let $\kappa \in \GG$ be a node. 
Define the input-output network $\GG(\kappa)$ to be the network $\GG$ with input node $\iota$ and output node $\kappa$.  \END
\end{definition}

\begin{definition}  \label{D:sigma(kappa)}
Let $\GG$ be an input-output network 
and let $\kappa \in \GG$ be a node. Define the nodes $\sigma^u(\kappa)$ and $\sigma^d(\kappa)$ as follows. 
\begin{enumerate}[(a)]

\item If $\kappa$ is simple, then $\sigma^u(\kappa) = \kappa = \sigma^d(\kappa)$.

\item If $\kappa$ is appendage, then $\sigma^u(\kappa)$ is a minimal upstream simple node with an appendage path to $\kappa$ and $\sigma^d(\kappa)$ is a maximal downstream simple node with an appendage path from $\kappa$. \END

\end{enumerate}
\end{definition}

\begin{remark} \em 
Let $\AA$ be an appendage subnetwork.
Since $\AA$  is a transitive component, we can choose an arbitrary node 
$\tau \in \AA$ and observe that $\sigma^u(\AA) = \sigma^u(\tau)$ and $\sigma^d(\AA) =\sigma^d(\tau)$.
\END
\end{remark}

If $\kappa$ is an appendage node, then $\sigma^u(\kappa)$ is upstream from $\kappa$ and $\sigma^d(\kappa)$ is downstream from $\kappa$. 
If $\kappa$ is a node in an appendage subnetwork $\AA$, then $\sigma^u(\kappa)$ is contained in some structural subnetwork or is a super-simple node $\VV$ with an arrow $\til{\VV} \to \til{\AA}$ and $\sigma^d(\kappa)$ is contained in some structural subnetwork or is a super-simple node $\VV$ with an arrow $\til{\AA} \to \til{\VV}$.

\begin{lemma}\label{lem:linked_app}
An appendage node $\tau$ is in the $j^{th}$ linked appendage subnetwork $T_j$ if and only if there is a simple cycle $C$ that contains $\tau$ and a node in the simple subnetwork $\LL_j''$, but $C$ does not contain any super-simple node.
\end{lemma}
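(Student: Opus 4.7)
The plan is to establish the equivalence by extracting a simple cycle in the forward direction and, conversely, constructing a witnessing $\iota o$-simple path $S$ for which the transitive component of $\tau$ in $C_S$ has the properties required by Definition~\ref{D:structural_subnet}(b). The key general fact used throughout is that super-simple nodes lie on every $\iota o$-simple path, hence are absent from every complementary subnetwork $C_S$.

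\textbf{Forward direction.} Suppose $\tau \in T_j$, so $\tau$ is non-super-appendage and there is an $\iota o$-simple path $S$ such that the transitive component of $\tau$ in $C_S$ contains some $\sigma \in \LL_j''$. Since $\tau$ and $\sigma$ share a transitive component of $C_S$, there are directed paths $\tau \pathto \sigma$ and $\sigma \pathto \tau$ inside $C_S$; concatenating them and pruning repeats yields a simple cycle $C \subseteq C_S$ through $\tau$ and $\sigma$. Because $S$ contains every super-simple node, none of them lies in $C_S$, hence none lies on $C$.

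\textbf{Backward direction.} Assume $C$ is a simple cycle containing the appendage node $\tau$, some $\sigma\in\LL_j''$, and no super-simple node. First I would show that every simple node of $C$ lies in $\LL_j''$: if some simple node of $C$ were in $\LL_i''$ with $i\neq j$ (say $i<j$), splicing the sub-arc of $C$ joining it to $\sigma$ with the initial segment of an $\iota o$-simple path through that node and the terminal segment of one through $\sigma$ and pruning repeats produces an $\iota o$-simple path that visits $\rho_0,\dots,\rho_{i-1},\rho_j,\dots,\rho_{q+1}$ but not $\rho_i,\dots,\rho_{j-1}$, contradicting super-simplicity. Next I would construct an $\iota o$-simple path $S$ missing every node of $C$: since $S$ visits only simple nodes it automatically avoids all appendage nodes of $C$, so it suffices to avoid the finitely many simple nodes of $C$, all of which lie in $\LL_j''$ by the first step; in each zone $i\neq j$ any choice of simple segment works, and in zone $j$ the non-super-simplicity of each targeted node allows one to assemble a simple segment from $\rho_{j-1}$ to $\rho_j$ avoiding all of them simultaneously. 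With such an $S$, $C\subseteq C_S$, so $\tau$ and $\sigma$ share a transitive component $\TT$ of $C_S$. Finally I would verify $\TT\subseteq\{\tau\}\cup\LL_j''\cup\{\text{non-super-appendage nodes}\}$: super-simple nodes are excluded from $C_S$; a super-appendage node in $\TT$ would force $\TT$ to coincide with its own transitive component, which by Definition~\ref{defn:super_appendage}(b) is entirely appendage, contradicting $\sigma\in\TT$; and a simple node of $\TT$ lying in $\LL_i''$ with $i\neq j$ would sit on a simple cycle in $\GG$ through $\sigma$ with no super-simple node, contradicting the first step. Applying the super-appendage argument to $\tau$ itself shows $\tau$ is non-super-appendage, so $\tau\in T_j$.

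\textbf{Main obstacle.} The delicate step is producing a single simple path from $\rho_{j-1}$ to $\rho_j$ that simultaneously avoids every node of $C\cap\LL_j''$. Individual avoidability follows at once from non-super-simplicity of each such node, but assembling one path missing all of them at once requires a separate argument, most naturally a Menger-style path-combination inside the zone subnetwork between $\rho_{j-1}$ and $\rho_j$, or an inductive splicing argument that reroutes around each offending node in turn while preserving simplicity.
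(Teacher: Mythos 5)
Your overall strategy is the same as the paper's, which disposes of the lemma in two sentences: unwind Definition~\ref{D:structural_subnet}(b) and invoke the fact that two nodes lie in the same transitive component of $C_S$ if and only if some cycle of $C_S$ contains both. Your forward direction is exactly that argument, correctly noting that $C_S$ contains no super-simple node because every super-simple node lies on $S$. In the backward direction, your Step 1 (every simple node of $C$ lies in $\LL_j''$) and Step 3 (the transitive component contains no super-appendage node and no simple node outside $\LL_j''$, and $\tau$ itself is not super-appendage) are sound and supply details the paper omits entirely.

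The gap is the one you flag yourself: the existence of a single $\iota o$-simple path $S$ avoiding every simple node of $C$ at once, which is what you need to get $C\subseteq C_S$. This is a genuine missing step, not a routine verification, and the remedies you suggest would not close it as stated. Non-super-simplicity only guarantees avoidance one node at a time, and a set of individually avoidable nodes of $\LL_j''$ need not be simultaneously avoidable: if zone $j$ consists of two parallel two-arrow routes $\rho_{j-1}\to a\to\rho_j$ and $\rho_{j-1}\to b\to\rho_j$, then $\{a,b\}$ cannot be avoided by any $\iota o$-simple path, so a Menger-style or generic rerouting argument inside the zone cannot succeed without further input. What rules such configurations out is precisely the hypothesis that $C$ carries an appendage node $\tau$: if $C$ contained two simple nodes $a,b$ whose connecting arc passes through $\tau$, splicing $\iota\pathto\rho_{j-1}\pathto a$, the arc $a\pathto\tau\pathto b$, and $b\pathto\rho_j\pathto o$ would exhibit $\tau$ on an $\iota o$-simple path, contradicting that $\tau$ is appendage. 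Any complete proof of Step 2 must exploit $\tau$ in this way rather than argue from avoidability of the simple nodes alone. To be fair, the paper's own proof does not supply this step either --- it silently identifies ``cycle avoiding super-simple nodes'' with ``cycle inside some $C_S$'' --- so you have correctly located where the real content of the lemma lies, but your proposal does not yet prove it.
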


\begin{proof}
Recall that $\tau \in T_j$ if, for some complementary subnetwork $C_S$, the transitive  component of $\tau$ in $C_S$ is the union of $\tau$, nodes in $\LL'_j$, and non-super-appendage nodes.
The statement then follows because two nodes belong to the same transitive  component if and only if some cycle includes them.
\end{proof}

\subsection{Structural Subnetworks  of $\GG(\kappa)$}
\label{S:structural_subnetworks}

We start with a lemma that characterizes the super-simple nodes of $\GG$ that are super-simple nodes of $\GG(\kappa)$:

\begin{lemma} \label{lem:shared_super_simple}
Let  $\rho$ be a super-simple node of $\GG$ and let $\kappa$ be a node in $\GG$. Then $\rho$ is a super-simple node of $\GG(\kappa)$ if and only if  $\rho \preceq \sigma^u(\kappa)$. 
\end{lemma}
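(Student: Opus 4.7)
The plan is to prove both directions by constructing or dissecting simple directed paths $\iota\pathto\kappa$ in $\GG$, exploiting the disjointness of simple and appendage nodes (a simple node can never be an interior node of an appendage path).

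For the ($\Rightarrow$) direction, assume $\rho$ is super-simple in $\GG(\kappa)$, and set $\sigma_0=\sigma^u(\kappa)$. By Definition \ref{D:sigma(kappa)} there is an appendage path $\alpha:\sigma_0\pathto\kappa$ (trivial when $\kappa$ is simple, in which case $\sigma_0=\kappa$), and since $\sigma_0$ is a simple node of $\GG$ there is an $\iota o$-simple path $P:\iota\pathto\sigma_0\pathto o$. Let $\beta$ denote the prefix $\iota\pathto\sigma_0$ of $P$. I would verify that the concatenation $\beta\cdot\alpha$ is a directed simple path $\iota\pathto\kappa$ in $\GG$: $\beta$ consists only of simple nodes while the non-$\sigma_0$ part of $\alpha$ consists only of appendage nodes, so the two pieces share only $\sigma_0$. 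By the hypothesis $\rho$ lies on $\beta\cdot\alpha$, and since $\rho$ is simple it cannot be an interior node of $\alpha$, so $\rho$ lies on $\beta$. If $\rho=\sigma_0$ then $\rho$ is super-simple and equals $\sigma^u(\kappa)$; if $\rho$ lies strictly before $\sigma_0$ on $P$ then $\rho\prec\sigma_0$ via $P$. Either way $\rho\preceq\sigma^u(\kappa)$.

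For the ($\Leftarrow$) direction, suppose $\rho\preceq\sigma^u(\kappa)$ and let $Q:\iota\pathto\kappa$ be an arbitrary simple path in $\GG$. Let $\sigma$ be the last simple node visited by $Q$ (it exists since $\iota$ is simple), and split $Q=Q_1\cdot\alpha$ at $\sigma$. By the choice of $\sigma$ the suffix $\alpha$ is either trivial (exactly when $\kappa$ is simple and $\sigma=\kappa$) or an appendage path from the simple node $\sigma$ to the appendage node $\kappa$. In both cases I would check that the definition of $\sigma^u(\kappa)$ forces $\sigma^u(\kappa)\preceq\sigma$, and combined with the hypothesis and the transitivity of $\preceq$ this yields $\rho\preceq\sigma$. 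If $\rho=\sigma$ then $\rho\in Q$ directly. Otherwise, because $\rho$ is super-simple it does not lie in any simple subnetwork $\LL_j''$, so the relation $\rho\preceq\sigma$ sharpens to $\rho\prec\sigma$; hence on every $\iota o$-simple path $P'$ through $\sigma$ the node $\rho$ appears strictly before $\sigma$. Writing $\gamma$ for the suffix $\sigma\pathto o$ of such a $P'$, the segment $\gamma$ avoids $\rho$. Assume for contradiction that $\rho\notin Q$, hence $\rho\notin Q_1$; then $Q_1\cdot\gamma$ is a walk $\iota\to o$ avoiding $\rho$, and reducing it to a directed simple path by excising cycles produces an $\iota o$-simple path of $\GG$ that avoids $\rho$, contradicting the super-simplicity of $\rho$ in $\GG$.

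The main obstacle I foresee is the combinatorial claim $\sigma^u(\kappa)\preceq\sigma$ used in the $\Leftarrow$ direction. The defining property of $\sigma^u(\kappa)$ as `a minimal upstream simple node with an appendage path to $\kappa$' directly rules out only $\sigma\prec\sigma^u(\kappa)$; one must additionally show that in the remaining incomparable case $\sigma$ and $\sigma^u(\kappa)$ lie in a common simple subnetwork $\LL_j''$, so that the `same simple subnetwork' clause in the definition of $\preceq$ applies. Once this small combinatorial lemma about $\sigma^u$ is in hand, the rest of the argument is a routine concatenation-and-walk-reduction built around the defining property that super-simple nodes meet every $\iota o$-simple path.
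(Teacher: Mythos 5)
Your proof is correct and follows essentially the same route as the paper's: both directions hinge on the concatenated path $\iota\pathto\sigma^u(\kappa)\pathto\kappa$ and on identifying the last simple node of an arbitrary simple path $\iota\pathto\kappa$, with your forward direction being the direct form of the contrapositive argument the paper gives. The only differences are cosmetic: you spell out the walk-excision step showing that $\rho\prec\sigma$ forces $\rho$ onto every simple path $\iota\pathto\sigma$, and you flag the comparability issue behind $\sigma^u(\kappa)\preceq\sigma$, both of which the paper leaves implicit.
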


\begin{proof}
$(\Longleftarrow)$  
Suppose $\rho \preceq \sigma^u(\kappa)$.  Consider a simple path $p = \iota \pathto \kappa$.
Let $\sigma'$ be the last simple node on this path. We show that $\rho$ is super-simple in two parts.
If $\kappa$ is a simple node, then $\sigma^u(\kappa) = \kappa$ and $\sigma'=\kappa$ because it is the last simple node in $p$. Then $\rho \preceq \sigma'$ and $p$ contains $\rho$.   
If $\kappa$ is an appendage node, then  $\rho \preceq \sigma'$ since $\sigma^u(\kappa)$ is minimal. Consequently, $p$ contains $\rho$. Since the simple path from $\iota$ to $\kappa$ was arbitrary, $\rho$ is super-simple in $\GG(\kappa)$. 

$(\Longrightarrow)$
Suppose $\sigma^u(\kappa) \prec \rho$. Then there is a simple path $\iota \pathto \sigma^u(\kappa)$ which avoids $\rho$. If $\sigma^u(\kappa) \neq \kappa$, there is an appendage path $\sigma^u(\kappa) \pathto \kappa$, which by definition avoids $\rho$. There is a simple path $\iota \pathto \sigma^u(\kappa)$ which avoid $\rho$ since $\sigma^u(\kappa) \prec \rho$. The concatenation of these paths, $\iota \pathto \sigma^u(\kappa) \pathto \kappa$, avoids $\rho$ and shows $\rho$ is not super-simple in $\GG(\kappa)$. If $\sigma^u(\kappa) = \kappa$, then the simple path $\iota \pathto \sigma^u(\kappa)$ shows $\rho$ is not super-simple in $\GG(\kappa)$.
\end{proof}

The next result shows that a structural subnetwork of $\GG$ is a structural subnetwork of $\GG(\kappa)$ when its two super-simple nodes are super-simple nodes of $\GG(\kappa)$.

\begin{lemma} \label{lem:shared_structural}
Let $\kappa \in \GG$ be a node
and let $\LL_j$ be the $j^{th}$ structural subnetwork of $\GG$. Suppose $\rho_{j-1},\,\rho_j\in\LL_j$ are adjacent super-simple nodes of $\GG(\kappa)$. Then $\LL_j$ is a structural subnetwork of $\GG(\kappa)$. 
\end{lemma}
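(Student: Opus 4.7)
The plan is to verify, piece by piece, that the ingredients defining the $j$-th structural subnetwork are the same in $\GG$ and in $\GG(\kappa)$. By Definition~\ref{D:structural_subnet}(d) these ingredients are the pair of adjacent super-simple endpoints, the simple subnetwork $\LL_j''$ strictly between them, and the set $T_j$ of linked appendage nodes. The hypothesis gives the endpoints and their adjacency in $\GG(\kappa)$ for free, so the content of the lemma is that $\LL_j''$ and $T_j$ are invariant under the change of output from $o$ to $\kappa$.

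First, I would invoke Lemma~\ref{lem:shared_super_simple} to record that $\rho_{j-1}\preceq\rho_j\preceq\sigma^u(\kappa)$, which in turn guarantees the existence of a simple path $\rho_j\pathto\sigma^u(\kappa)$ in $\GG$; composed with the appendage path $\sigma^u(\kappa)\pathto\kappa$ (trivial when $\kappa$ itself is simple), this produces a ``tail'' that turns any $\iota o$-simple path through $\rho_j$ in $\GG$ into an $\iota\kappa$-walk in $\GG(\kappa)$. Using this, I would show that the $j$-th simple subnetwork of $\GG(\kappa)$ equals $\LL_j''$: for $\sigma\in\LL_j''$, take an $\iota o$-simple path of $\GG$ through $\rho_{j-1},\sigma,\rho_j$, truncate at $\rho_j$, splice on the tail just constructed, and prune repeats by the standard shortcut argument to obtain an $\iota\kappa$-simple path through $\sigma$ with $\rho_{j-1}\prec\sigma\prec\rho_j$ in $\GG(\kappa)$. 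Adjacency of $\rho_{j-1}$ and $\rho_j$ as super-simple nodes of $\GG(\kappa)$ forbids $\sigma$ from itself being super-simple of $\GG(\kappa)$, so $\sigma$ sits in the $j$-th simple subnetwork of $\GG(\kappa)$. The reverse inclusion is symmetric: given an $\iota\kappa$-simple path through $\rho_{j-1},\sigma,\rho_j$ in $\GG(\kappa)$, extend its prefix $\iota\pathto\rho_j$ by any simple path $\rho_j\pathto o$ in $\GG$ and prune again.

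The main obstacle is matching $T_j$ for $\GG$ with $T_j$ for $\GG(\kappa)$, since the classification of a node as simple or appendage can shift when the output changes. Here I would rely on Lemma~\ref{lem:linked_app}, which characterizes membership in $T_j$ by the existence of a simple cycle $C$ containing the node in question and some node of $\LL_j''$, with no super-simple node on $C$. Because $\GG$ and $\GG(\kappa)$ share the same underlying directed graph, the candidate cycles $C$ are the same, and the equality of the $j$-th simple subnetworks established above means ``$C$ contains a node of $\LL_j''$'' is the same condition in both networks. It then remains to check that ``$C$ contains no super-simple node'' transfers in both directions. Lemma~\ref{lem:shared_super_simple} identifies the super-simple nodes of $\GG$ lying at or below $\sigma^u(\kappa)$ with super-simple nodes of $\GG(\kappa)$; combined with the hypothesized adjacency of $\rho_{j-1}$ and $\rho_j$ and the preceding step, this rules out any ``new'' super-simple node of $\GG(\kappa)$ appearing on a cycle through $\LL_j''$, and conversely a super-simple node of $\GG$ on such a cycle would have to lie strictly between $\rho_{j-1}$ and $\rho_j$, contradicting their adjacency in $\GG$. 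Once $T_j$ is shown to be the same set in both networks, Definition~\ref{D:structural_subnet}(d) applied in $\GG(\kappa)$ reassembles $\rho_{j-1}$, $\rho_j$, $\LL_j''$, and $T_j$ into exactly $\LL_j$, completing the proof.
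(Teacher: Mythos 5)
Your overall route is the same as the paper's: reduce the lemma to showing that the simple subnetwork $\LL_j''$ and the linked-appendage set $T_j$ are unchanged when the output is moved from $o$ to $\kappa$, using Lemma~\ref{lem:shared_super_simple} for the endpoints and Lemma~\ref{lem:linked_app} for the cycle characterization of $T_j$. Your treatment of $\LL_j''=\LL_j''(\kappa)$ via splicing a tail $\rho_j\pathto\sigma^u(\kappa)\pathto\kappa$ and pruning is in fact more explicit than the paper, which simply asserts this equality from adjacency. The problem is in the step you yourself flag as the main obstacle, the transfer of the condition ``$C$ contains no super-simple node,'' and there the argument you give does not go through.

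Concretely: take $\tau\in T_j(\kappa)$ with witness cycle $C$ containing $\tau$ and some $\sigma\in\LL_j''$ and avoiding super-simple nodes of $\GG(\kappa)$, and suppose $C$ contains a super-simple node $\rho$ of $\GG$. You claim $\rho$ ``would have to lie strictly between $\rho_{j-1}$ and $\rho_j$, contradicting their adjacency in $\GG$.'' That is not what follows. Since $C$ avoids super-simple nodes of $\GG(\kappa)$, $\rho$ is not super-simple in $\GG(\kappa)$, so Lemma~\ref{lem:shared_super_simple} gives $\sigma^u(\kappa)\prec\rho$, hence $\rho_j\prec\rho$: the offending node sits strictly \emph{downstream} of $\rho_j$, and no adjacency is violated. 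The actual contradiction (and this is what the paper does) is obtained by routing an $\iota o$-simple path $\iota\pathto\sigma\pathto\tau\pathto\rho\pathto o$ through the cycle, which would make $\tau$ a simple node of $\GG$ — contradicting that $\tau$ is appendage. Your sketch is missing this construction. In the other direction, you assert that ``new'' super-simple nodes of $\GG(\kappa)$ cannot appear on a cycle witnessing $\tau\in T_j$, citing Lemma~\ref{lem:shared_super_simple}; but that lemma only says which super-simple nodes of $\GG$ \emph{survive} in $\GG(\kappa)$, and is silent about nodes (simple or appendage in $\GG$) that become super-simple in $\GG(\kappa)$. Ruling those out on $C$ needs its own argument (e.g., a node of $\LL_j''$ becoming super-simple in $\GG(\kappa)$ would insert a super-simple node of $\GG(\kappa)$ between $\rho_{j-1}$ and $\rho_j$, contradicting their adjacency in $\GG(\kappa)$ — note: adjacency in $\GG(\kappa)$, not in $\GG$ — and appendage nodes of $\GG$ on $C$ require a separate path argument). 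So the skeleton is right, but the two transfer claims at the heart of $T_j=T_j(\kappa)$ are, as written, respectively incorrect and unsupported.
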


\begin{proof}
Let $\LL_j(\kappa)$ be the structural subnetwork of $\GG(\kappa)$ that has  super-simple nodes $\rho_{j-1}$ and $\rho_j$. Since $\rho_{j-1}$ and $\rho_j$ are adjacent super-simple nodes of both $\GG$ and $\GG(\kappa)$, the simple networks $\LL_j''$ and $\LL_j''(\kappa)$ are equal. We need to show that the linked appendage nodes $T_j$ and $T_j(\kappa)$ are equal. 
    
Suppose there exists $\tau \in T_j(\kappa)$ with $\tau \notin T_j$. By Lemma \ref{lem:linked_app}, there is a simple cycle $C$ that contains $\tau$, a node of $\LL_j'$, and no super-simple nodes of $\GG(\kappa)$. Since $\tau \notin T_j$, $C$ must contain a super-simple node $\rho$ of $\GG$ that is not super-simple in $\GG(\kappa)$. By Lemma \ref{lem:shared_super_simple}, this implies $\rho_j \prec \rho$.  But $C$ also contains a simple node $\sigma \in \LL_j'$, so this implies there is a simple path $\iota \pathto \sigma \pathto \tau \pathto \rho \pathto o$, contradicting $\tau$ being appendage in $\GG$. Therefore $\tau \in T_j$.
Reversing the roles of $T_j$ and $T_j(\kappa)$ in the above argument shows that if $\tau \in T_j$ then $\tau \in T_j(\kappa)$, completing the proof that $T_j = T_j(\kappa)$. 

Now $\rho_{j-1}$ and $\rho_j$ are super-simple in $\GG(\kappa)$, $\LL_j'' = \LL_j''(\kappa)$, and $T_j = T_j(\kappa)$. Therefore $\LL_j$ is  a structural subnetwork of $\GG(\kappa)$.
\end{proof}

\begin{remark} \label{R:struct_G_k}  \em
The network $\GG(\kappa)$ may have other structural subnetworks, but we study the homeostasis pattern induced by homeostasis subnetworks of $\GG$. Hence, we focus on whether a structural subnetwork of $\GG$ is still a structural subnetwork of $\GG(\kappa)$.

An appendage subnetwork $\AA$ of $\GG$ is not a structural subnetwork of $\GG(\kappa)$, because $\AA$ is transitive and thus has no super-simple nodes.
\END
\end{remark}

\subsection{Appendage Subnetworks of $\GG(\kappa)$}
\label{S:appendage_subnetworks}

\begin{lemma} \label{L:app_connect_order}
Let $\AA$ be an appendage subnetwork  of $\GG$
and let $\sigma_1, \sigma_2$ be simple nodes of $\GG$
with appendage paths $\AA \pathto \sigma_1$ and $\sigma_2 \pathto \AA$.  Then $\sigma_1 \preceq \sigma_2$. Moreover, either $\sigma_1 = \sigma_2$ is a super-simple node or $\sigma_1\prec \sigma_2$. 
\end{lemma}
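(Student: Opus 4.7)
The plan is to exploit transitivity of the appendage subnetwork $\AA$ to produce a walk $\omega : \sigma_2 \pathto \sigma_1$ through appendage nodes, and then splice $\omega$ into an $\iota o$-simple path to force the desired inequality. First I would fix $\alpha_1, \alpha_2 \in \AA$ witnessing the two hypothesized appendage paths $\alpha_1 \pathto \sigma_1$ and $\sigma_2 \pathto \alpha_2$. Because $\AA$ is a transitive component, there is a directed walk inside $\AA$ from $\alpha_2$ to $\alpha_1$, and concatenating the three pieces yields a walk $\omega : \sigma_2 \pathto \sigma_1$ whose interior nodes are all appendage.

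To prove $\sigma_1 \preceq \sigma_2$, I would first observe that any two simple nodes of $\GG$ are comparable under $\preceq$: assigning to each simple node the index of the interval between consecutive super-simple nodes in which it lies, the total order on the super-simple nodes forces one of $\sigma_1 \prec \sigma_2$, $\sigma_2 \prec \sigma_1$, or the ``same simple subnetwork'' relation to hold. It therefore suffices to rule out $\sigma_2 \prec \sigma_1$ strictly. Assume for contradiction that $\sigma_2 \prec \sigma_1$; then $\sigma_1 \neq \sigma_2$, extract a simple path $\pi : \sigma_2 \pathto \sigma_1$ from $\omega$, and invoke the definition of $\prec$ to obtain an $\iota o$-simple path $S : \iota \pathto \sigma_2 \pathto \rho \pathto \sigma_1 \pathto o$ through a super-simple node $\rho$. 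Splicing $S$ up to $\sigma_2$, then $\pi$, then $S$ from $\sigma_1$ onward produces an $\iota o$-walk avoiding $\rho$. The two $S$-segments are disjoint because $\sigma_2$ strictly precedes $\sigma_1$ on $S$, and the interior of $\pi$ consists of appendage nodes and hence cannot appear on $S$ (which is made up of simple nodes); so the splice is a genuine $\iota o$-simple path avoiding $\rho$, contradicting super-simplicity of $\rho$.

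For the moreover clause, I would suppose $\sigma_1 = \sigma_2 = \sigma$ and, for contradiction, that $\sigma$ is not super-simple. Then some $\iota o$-simple path $S$ avoids $\sigma$. In the complementary subnetwork $C_S$ one can walk from $\sigma$ to $\alpha_2$ along $\sigma \pathto \alpha_2$, from $\alpha_2$ to $\alpha_1$ within $\AA$, and from $\alpha_1$ back to $\sigma$ along $\alpha_1 \pathto \sigma$; each piece stays inside $C_S$ because its interior consists of appendage nodes, which never lie on any $\iota o$-simple path. Hence $\sigma$ and $\alpha_2$ belong to a common transitive component of $C_S$, which contradicts $\alpha_2$ being super-appendage: the transitive component of a super-appendage node in any $C_S$ must consist solely of appendage nodes, yet $\sigma$ is simple.

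The main technical point is the simplicity check for the spliced path in the second step, which rests on the interior of $\pi$ being appendage (so it cannot meet $S$) together with $\sigma_2$ strictly preceding $\sigma_1$ on $S$. The super-appendage hypothesis on $\AA$ is used only in the moreover clause, where it rules out $\sigma$ being a simple non-super-simple node sitting in the same transitive component of some $C_S$ as an element of $\AA$.
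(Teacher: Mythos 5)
Your argument leaves the hardest case of the ``moreover'' clause untouched. The relation $\sigma_1\preceq\sigma_2$ has three branches: $\sigma_1\prec\sigma_2$, $\sigma_1=\sigma_2$ super-simple, or $\sigma_1$ and $\sigma_2$ lying in the same simple subnetwork $\LL_j''$. The moreover clause asserts that the third branch never occurs, and that branch includes the case $\sigma_1\neq\sigma_2$ with both nodes strictly between the same pair of adjacent super-simple nodes; such nodes are incomparable under $\prec$, so your first step (which only excludes $\sigma_2\prec\sigma_1$) says nothing about it, and your second step assumes $\sigma_1=\sigma_2$. The paper devotes a three-way case analysis to exactly this situation: if some $\iota o$-simple path visits $\sigma_2$ before $\sigma_1$, splice the appendage path $\sigma_2\pathto\AA\pathto\sigma_1$ into it to place nodes of $\AA$ on an $\iota o$-simple path; if some $\iota o$-simple path visits $\sigma_1$ before $\sigma_2$, the resulting cycle $\sigma_1\pathto\sigma_2\pathto\AA\pathto\sigma_1$ avoids super-simple nodes and links $\AA$ to $\LL_j''$, contradicting that the nodes of $\AA$ are super-appendage; and if no $\iota o$-simple path contains both, one splices segments of two different $\iota o$-simple paths. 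None of this is subsumed by your two steps, so the proof is incomplete. (Your treatment of $\sigma_1=\sigma_2=\sigma$ via the complementary subnetwork $C_S$ is correct and is essentially the paper's cycle argument.)

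There is also a smaller flaw in your first step. You conclude that the spliced path avoids $\rho$ and hence contradicts super-simplicity of $\rho$; but when $\sigma_2\prec\sigma_1$ holds because $\sigma_2$ is itself super-simple and $\sigma_1$ lies in the immediately following simple subnetwork (or symmetrically), the only available choice is $\rho=\sigma_2$ (resp.\ $\rho=\sigma_1$), and the splice then passes through $\rho$. The robust contradiction --- the one the paper uses --- is that the spliced path is an $\iota o$-simple path whose interior contains appendage nodes, which is impossible since appendage nodes by definition lie on no $\iota o$-simple path. That fix is one line and also relieves you of justifying that a single $\iota o$-simple path contains both $\sigma_2$ and $\sigma_1$ with $\rho$ strictly between them (the relation $\prec$ is generated by one-path-at-a-time relations, so producing such a path requires splicing at super-simple nodes). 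The missing same-simple-subnetwork case, however, is a genuine gap.
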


\begin{proof}
 Since $\sigma_1,\sigma_2$ are simple nodes there are simple paths $\iota \pathto \sigma_2$ and 
 $\sigma_1 \pathto o$ that contain only simple nodes.  By contradiction, suppose that 
 $\sigma_2 \prec \sigma_1$. Then $\sigma_2$ is not on the path $\sigma_1\pathto o$ and 
 $\sigma_1$ is not on the path $\iota \pathto \sigma_2$. By concatenating paths we produce a simple path 
\[
  \iota \pathto \sigma_2 \pathto \AA \pathto  \sigma_1 \pathto o,
\]
which contradicts $\AA$ being an appendage subnetwork.

Next we show that it is impossible for both $\sigma_1$ and $\sigma_2$ to be contained in the 
same simple subnetwork $\LL''$.  
By contradiction, suppose there exists a simple subnetwork $\LL''$ such that $\sigma_1,\sigma_2 \in \LL''$.
There are three possibilities.

\begin{itemize}
\item[(a)] If there is a simple path $\iota \pathto \sigma_1 \pathto \sigma_2 \pathto o$, then there exists a simple path 
\[
    \iota \pathto \sigma_1 \pathto \AA \pathto \sigma_2 \pathto o,
\]
contradicting that $\AA$ is an appendage subnetwork. 

\item[(b)] If there is a simple path $\iota \pathto \sigma_2 \pathto \sigma_1 \pathto o$, then the cycle
\[
    \sigma_1 \pathto \AA \pathto \sigma_2 \pathto \sigma_1
\]
contradicts the fact that every node in $\AA$ is super-appendage. 

\item[(c)] If neither of the above paths exists, then there is a simple path 
$\iota \pathto \sigma_1 \pathto o$ that avoids $\sigma_2$ and a path 
$\iota \pathto \sigma_2 \pathto o$ that avoids $\sigma_1$. In this case, the path 
\[
    \iota \pathto \sigma_1 \pathto \AA \pathto \sigma_2 \pathto o
\]
contradicts the fact that $\AA$ is an appendage subnetwork.
\end{itemize}
We can now conclude that either $\sigma_1 = \sigma_2$ is a super-simple node or 
$\sigma_1\prec \sigma_2$.
\end{proof}

\begin{lemma}\label{L:A_induces}
Let $\AA$ be an appendage subnetwork of $\GG$
and let $\kappa \in \GG$ be a node that is not in $\AA$. If there is a path from $\AA$ to $\kappa$ and every such path passes through a super-simple node $\rho$ of $\GG$ satisfying $\sigma^d(\AA) \preceq \rho \preceq \sigma^u(\kappa)$, then $\AA$ is an appendage subnetwork of $\GG(\kappa)$.
\end{lemma}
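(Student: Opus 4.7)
To prove $\AA$ is an appendage subnetwork of $\GG(\kappa)$, I will verify both conditions of Definition~\ref{D:appendage_subnet}: each $\tau \in \AA$ is super-appendage in $\GG(\kappa)$, and $\AA$ is a full transitive component of the super-appendage subgraph of $\GG(\kappa)$.

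The core step is to show every $\tau \in \AA$ is appendage in $\GG(\kappa)$, i.e., not on any $\iota\kappa$-simple path. I proceed by contradiction: assume $\tau \in \AA$ lies on an $\iota\kappa$-simple path $P$. Then the subpath of $P$ from $\tau$ to $\kappa$ is a path from $\AA$ to $\kappa$, so by the hypothesis it contains a super-simple node $\rho$ of $\GG$ with $\sigma^d(\AA) \preceq \rho \preceq \sigma^u(\kappa)$. Combining the appendage paths $\sigma^u(\AA) \pathto \tau$ and $\tau \pathto \sigma^d(\AA)$ (both available because $\AA$ is a transitive component of super-appendage nodes with entry/exit simple nodes $\sigma^u(\AA), \sigma^d(\AA)$) with an $\iota o$-simple path in $\GG$ passing through both $\sigma^d(\AA)$ and $\rho$ (which exists since $\sigma^d(\AA) \preceq \rho$), I construct an $\iota o$-simple path
\[
\iota \pathto \sigma^u(\AA) \pathto \tau \pathto \sigma^d(\AA) \pathto \rho \pathto o
\]
in $\GG$ through the appendage node $\tau$, contradicting that $\tau$ is appendage in $\GG$.

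Once the appendage property is in hand, the super-appendage condition and the transitive-component completeness follow by parallel arguments. By Lemma~\ref{lem:shared_super_simple}, the hypothesis $\rho \preceq \sigma^u(\kappa)$ makes $\rho$ super-simple in $\GG(\kappa)$ as well, so every $\iota\kappa$-simple path $S$ of $\GG(\kappa)$ contains $\rho$. Any node $\tau'$ in the transitive component of $\tau \in \AA$ inside $C_S$ lies on a cycle through $\tau$ avoiding $\rho$; the super-appendage property of $\tau$ in $\GG$, combined with the preceding contradiction argument applied to each node on such a cycle, confines the cycle within the super-appendage subgraph of $\GG$ and forces $\tau' \in \AA$.

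The delicate point, and the main obstacle, is verifying that the constructed path above is actually simple. The simple pieces $\iota \pathto \sigma^u(\AA)$ and $\sigma^d(\AA) \pathto \rho \pathto o$ contain only simple nodes of $\GG$, while the appendage pieces through $\tau$ have interiors entirely in $\AA$; so potential collisions are confined to simple nodes from the two simple pieces or appendage nodes from the two appendage pieces. The former is resolved by choosing two $\iota o$-simple paths (one through $\sigma^u(\AA)$, another through $\sigma^d(\AA)$ and $\rho$) whose used prefix and suffix lie in disjoint structural regions: the ordering $\sigma^d(\AA) \preceq \rho$ places $\rho$ in a structural subnetwork downstream of $\sigma^d(\AA)$'s, and the fact (Lemma~\ref{L:app_connect_order}) that $\sigma^u(\AA)$ and $\sigma^d(\AA)$ lie in the same simple subnetwork allows the routes through that subnetwork to be chosen without mutual overlap. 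The latter is handled by invoking the transitive structure of $\AA$ to route the appendage subpaths through $\tau$ exactly once.
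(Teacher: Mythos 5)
Your core step fails, and the failure is structural rather than a matter of more careful routing. You want to reach a contradiction by exhibiting an $\iota o$-simple path $\iota \pathto \sigma^u(\AA) \pathto \tau \pathto \sigma^d(\AA) \pathto \rho \pathto o$ in $\GG$ through $\tau$. No such simple path can exist. By Lemma \ref{L:app_connect_order} (applied with $\sigma_1 = \sigma^d(\AA)$ and $\sigma_2 = \sigma^u(\AA)$) the exit node satisfies $\sigma^d(\AA) \preceq \sigma^u(\AA)$, i.e.\ it is \emph{upstream} of the entry node: either $\sigma^d(\AA) = \sigma^u(\AA)$ is a super-simple node, which your path then visits twice, or $\sigma^d(\AA) \prec \sigma^u(\AA)$ and there is a super-simple node $\rho''$ with $\sigma^d(\AA) \preceq \rho'' \preceq \sigma^u(\AA)$; since the prefix $\iota \pathto \sigma^u(\AA)$ of any $\iota o$-simple path contains $\rho''$ and so does any suffix $\sigma^d(\AA) \pathto o$, the concatenation repeats $\rho''$. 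Indeed, if your path could be made simple then $\tau$ would by definition be a simple node of $\GG$, contradicting that $\AA$ is an appendage subnetwork --- and note that the ingredients of your construction ($\sigma^u(\AA)$, $\sigma^d(\AA)$, and some super-simple node downstream of $\sigma^d(\AA)$) exist for \emph{every} appendage subnetwork independently of $\kappa$ and of the assumption being contradicted, so if the construction worked it would prove every appendage node is simple. Your proposed repair also misreads Lemma \ref{L:app_connect_order}: that lemma \emph{proves} that $\sigma^u(\AA)$ and $\sigma^d(\AA)$ cannot lie in the same simple subnetwork; it does not assert that they do.

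The contradiction has to be extracted from the hypothetical $\iota\kappa$-simple path $p = \iota \pathto \tau \pathto \kappa$ itself, which is what the paper does: let $\sigma_1$ be the last $\GG$-simple node on the segment $\iota \pathto \tau$, and let $\sigma_2$ and $\rho'$ be the first $\GG$-simple and first $\GG$-super-simple nodes on the segment $\tau \pathto \kappa$ (the hypothesis on paths from $\AA$ to $\kappa$ guarantees $\rho'$ exists). Lemma \ref{L:app_connect_order} forces $\sigma_2 \prec \sigma_1$, so some super-simple node of $\GG$ lies on both the segment $\iota \pathto \sigma_1$ and the segment $\sigma_2 \pathto \rho'$, contradicting that $p$ is simple. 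Separately, your argument makes essentially no use of $\rho \preceq \sigma^u(\kappa)$ or of the node $\kappa$; that half of the hypothesis is what drives the two remaining verifications (that the nodes of $\AA$ remain super-appendage in $\GG(\kappa)$ and that $\AA$ remains a full transitive component there), which you dismiss as ``parallel arguments'' but which require separate cycle-based arguments via Lemmas \ref{lem:linked_app} and \ref{lem:shared_super_simple}.
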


\begin{proof}
First we show that the nodes of $\AA$ are appendage in $\GG(\kappa)$. 
By contradiction, suppose $\tau \in \AA$ is a simple node of $\GG(\kappa)$. 
Let $p = \iota \pathto \tau \pathto \kappa$ be a simple path in $\GG(\kappa)$. Let $\sigma_1$ be the last simple node of $\GG$ on the path $\iota \pathto \tau$. Let $\sigma_2$ and $\rho'$ be the first simple node and first super-simple node of $\GG$ on the path $\tau \pathto \kappa$, respectively. 

If $\rho'\preceq \sigma_1$ then the path $\iota \pathto \sigma_1$ passes through $\rho'$, contradicting that $p$ is a simple path.
Otherwise, suppose $\sigma_1 \prec \rho'$.
Since there are appendage paths $\sigma_1 \pathto \AA$ and $\AA\pathto \sigma_2$ and $\sigma_1 \neq \sigma_2$, by Lemma \ref{L:app_connect_order}, $\sigma_2 \prec \sigma_1$.  There is therefore a super-simple node $\rho''$ of $\GG$ with $\sigma_2 \preceq \rho'' \preceq \sigma_1$. The segments $\iota \pathto \sigma_1$ and $\sigma_2 \pathto \rho'$ of $p$ thus both contain $\rho''$, contradicting that $p$ is a simple path. We conclude that each node $\tau \in \AA$ is an appendage node of $\GG(\kappa)$. 

\medskip

Next we show that if there is a cycle $\AA \pathto \tau \pathto \AA$ consisting only of appendage nodes of $\GG(\kappa)$, then $\tau$ is an appendage node of $\GG$. 
Suppose not, then there exists a cycle $\AA \pathto \tau \pathto \AA$ contains a simple node of $\GG$ but only appendage nodes of $\GG(\kappa)$. Let $\sigma$ be the first simple node of $\GG$ on the cycle, starting from $\AA$. Since $\sigma$ is not a simple node of $\GG(\kappa)$, by Lemma \ref{lem:shared_super_simple}, either $\sigma\succ \sigma^u(\kappa)$ or $\sigma$ and $\sigma^u(\kappa)$ are incomparable. On the other hand, there is an appendage path from $\AA$ to $\sigma$ which implies $\sigma \preceq \sigma^d(\AA)$ or $\sigma$ and $\sigma^d(\AA)$ are incomparable. But this implies $\sigma^u(\kappa) \prec \sigma^d(\AA)$, which contradicts the assumption $\sigma^d(\AA) \preceq \rho \preceq \sigma^u(\kappa)$.

Now we claim that there is no appendage subnetwork $\AA'$ of $\GG(\kappa)$, such that $\AA \subsetneq \AA'$.
Suppose not, there exists such a appendage subnetworks $\AA'$. Then $\AA'$ is a a transitive component consisting of only appendage nodes of $\GG$.
Hence, $\AA'$ must be an appendage subnetwork of $\GG$ containing $\AA$. This contradicts $\AA$ an appendage subnetwork of $\GG$ that is not transitive with other appendage subnetworks.

\medskip

Finally, we show that $\AA$ is a transitive component of super-appendage nodes of $\GG(\kappa)$; that is, $\AA$ is an appendage subnetwork of $\GG(\kappa)$. 
Suppose not, then each node $\tau \in \AA$ is linked to a simple subnetwork $\LL''$ of $\GG(\kappa)$. By Lemma \ref{lem:linked_app}, there is a simple cycle $C$ 
that avoids super-simple node of $\GG(\kappa)$ and contains 
$\tau$ as well as a simple node of $\GG(\kappa)$.
Since $\AA$ is a transitive\  component of the appendage nodes of $\GG$, $C$ must contain a simple node of $\GG$. But $\tau$ is not a linked appendage node of $\GG$, so $C$ must contain a super-simple node $\rho'$ of $\GG$. Since $\rho'$ is not super-simple in $\GG(\kappa)$, by Lemma \ref{lem:shared_super_simple} $\rho\succ \sigma^u(\kappa)$. Let $\sigma$ be the first simple node of $\GG$ on the path $\AA \pathto \rho'$ in $C$. We have $\sigma \preceq \sigma^d(\AA)$ or $\sigma$ and $\sigma^d(\AA)$ are incomparable. But then the path $\sigma \pathto \rho'$ passes through $\rho$. Since $\rho \preceq \sigma^u(\kappa)$, $\rho$ is a super-simple node of $\GG(\kappa)$ by Lemma \ref{lem:shared_super_simple}, contradicting that $C$ avoids super-simple nodes of $\GG(\kappa)$. We conclude that $\AA$ is an appendage subnetwork of $\GG(\kappa)$.
\end{proof}

\begin{lemma}\label{L:avoiding_cycle}
Let $\AA$ be an appendage subnetwork of $\GG$ and let $\kappa$ be a node in $\GG$. Suppose a cycle 
$C$ in $\GG$ contains some node $\tau \in \AA$ and some node that is not in $\AA$.  Suppose also that $C$ 
avoids super-simple nodes of $\GG(\kappa)$. Then $\AA$ is not an appendage subnetwork of $\GG(\kappa)$. 
\end{lemma}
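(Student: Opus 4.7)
The plan is to argue by contradiction: assume $\AA$ \emph{is} an appendage subnetwork of $\GG(\kappa)$, so every node of $\AA$ (in particular $\tau$) is super-appendage in $\GG(\kappa)$, and $\AA$ is a full transitive component of super-appendage nodes of $\GG(\kappa)$. Let $\mu \in C$ denote the node not in $\AA$ guaranteed by hypothesis, and take $C$ to be a simple cycle through $\tau$ and $\mu$. I would split into two sub-cases according to whether $C$ meets the simple nodes of $\GG(\kappa)$.

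\emph{Sub-case (a):} $C$ contains some simple node $\sigma$ of $\GG(\kappa)$, lying in some simple subnetwork $\LL_j''(\kappa)$ of $\GG(\kappa)$. Then $C$ is a simple cycle in $\GG(\kappa)$ containing the appendage node $\tau$ and a node of $\LL_j''(\kappa)$, while avoiding every super-simple node of $\GG(\kappa)$ by hypothesis. Applying Lemma \ref{lem:linked_app} to the input-output network $\GG(\kappa)$ forces $\tau \in T_j(\kappa)$, i.e.\ $\tau$ is a linked (non-super-appendage) node of $\GG(\kappa)$, contradicting the assumption that $\tau$ is super-appendage in $\GG(\kappa)$.

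\emph{Sub-case (b):} $C$ contains no simple node of $\GG(\kappa)$; since it also avoids the super-simple nodes of $\GG(\kappa)$, every node of $C$ must be appendage in $\GG(\kappa)$. The key observation is that any $\iota\kappa$-simple path $S$ in $\GG(\kappa)$ is composed exclusively of simple nodes of $\GG(\kappa)$, so $S \cap C = \emptyset$ and hence $C \subseteq C_S$ (nodes and arrows). Because $C$ is a cycle lying entirely inside $C_S$, all of its nodes sit in the same transitive component of $C_S$ as $\tau$; by the super-appendage property of $\tau$ this component consists only of appendage nodes of $\GG(\kappa)$. Since $S$ was arbitrary, every node $\nu \in C$ is itself super-appendage in $\GG(\kappa)$. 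In particular $\mu$ is super-appendage, and $C$ is a cycle in the super-appendage subnetwork of $\GG(\kappa)$, so $\tau$ and $\mu$ belong to the same transitive component of super-appendage nodes, i.e.\ to the same appendage subnetwork of $\GG(\kappa)$. Since $\AA$ is that subnetwork (it contains $\tau$), we get $\mu \in \AA$, contradicting $\mu \notin \AA$.

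The main obstacle is sub-case (b): appendage-ness in $\GG(\kappa)$ for the nodes of $C$ is essentially immediate, but one must promote this to super-appendage-ness in $\GG(\kappa)$ for every node of $C$ in order to produce a cycle in the super-appendage subnetwork. The disjointness $S \cap C = \emptyset$ afforded by sub-case (b), together with the super-appendage-ness of the anchor node $\tau$, is precisely what lets one transport the super-appendage property from $\tau$ along the cycle to $\mu$.
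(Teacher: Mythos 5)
Your proof is correct and follows essentially the same route as the paper's: a case split on whether $C$ meets the simple nodes of $\GG(\kappa)$, with Lemma \ref{lem:linked_app} disposing of the case where it does and a transitive-component argument disposing of the case where every node of $C$ is appendage in $\GG(\kappa)$. Your sub-case (b) is in fact slightly more careful than the paper's corresponding case, since you explicitly promote every node of $C$ to super-appendage status in $\GG(\kappa)$ (via the disjointness of $C$ from every $\iota\kappa$-simple path) before invoking maximality of the transitive component $\AA$, a step the paper's terser argument leaves implicit.
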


\begin{proof}
The cycle $C$ can take three possible forms, and we will show that $\AA$ is not an appendage subnetwork of $\GG(\kappa)$ in each case.

\begin{enumerate}[(a)]
\item {\bf $C$ consists of simple nodes of $\GG(\kappa)$}.
Since $C$ contains a node $\tau \in \AA$, this immediately implies $\AA$ is not an appendage subnetwork of $\GG(\kappa)$. 
 
\item {\bf $C$ consists of appendage nodes of $\GG(\kappa)$}.
A node $\tau \in \AA$ forms a cycle with appendage nodes of $\GG(\kappa)$ that are not in $\AA$. Consequently, $\AA$ is not a transitive component of appendage nodes of $\GG(\kappa)$ and therefore not an appendage subnetwork.
 
\item {\bf $C$ consists of both simple and appendage nodes of $\GG(\kappa)$}.
By Lemma \ref{lem:linked_app}, any appendage node $\eta$ of $\GG(\kappa)$ on $C$ is linked to the simple nodes of $\GG(\kappa)$ on $C$ and hence $\eta$ is a non-super-appendage node. Therefore any node $\tau \in \AA$ on $C$ is either simple or non-super-appendage in $\GG(\kappa)$. We conclude that $\AA$ is not an appendage subnetwork of $\GG(\kappa)$. \qedhere
\end{enumerate}
\end{proof}

\begin{lemma} \label{L:kappa_prec_AA}
Let $\AA$ be an appendage subnetwork of $\GG$ and let $\kappa$ be a node in $\GG$. If $\sigma^u(\kappa) \prec \sigma^d(\AA)$, then $\AA$ is not an appendage subnetwork of $\GG(\kappa)$.
\end{lemma}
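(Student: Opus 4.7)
The plan is to invoke Lemma \ref{L:avoiding_cycle}: I will construct a simple cycle $C$ in $\GG$ that contains a node $\tau \in \AA$ together with a node outside $\AA$, and that avoids every super-simple node of $\GG(\kappa)$. The hypothesis $\sigma^u(\kappa) \prec \sigma^d(\AA)$ is exactly what forces $C$ to avoid the super-simple nodes of $\GG(\kappa)$, via Lemma \ref{lem:shared_super_simple}.

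To build $C$, first apply Lemma \ref{L:app_connect_order} to the appendage subnetwork $\AA$, obtaining $\sigma^d(\AA) \preceq \sigma^u(\AA)$; equality forces the common node to be $\GG$-super-simple, while the strict case is witnessed by an $\iota o$-simple path of $\GG$. In either situation we get a (possibly trivial) simple directed path $\sigma^d(\AA) \pathto \sigma^u(\AA)$ in $\GG$ whose intermediate simple and super-simple nodes $\mu$ all satisfy $\sigma^d(\AA) \preceq \mu \preceq \sigma^u(\AA)$. Concatenating this segment with the appendage path $\sigma^u(\AA) \pathto \AA$, a path inside the transitive component $\AA$, and the appendage path $\AA \pathto \sigma^d(\AA)$ produces a closed walk in $\GG$ containing $\sigma^d(\AA)$ (a simple $\GG$-node and hence not in $\AA$) and a node of $\AA$. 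A standard strongly-connected-component argument extracts from this walk a simple cycle $C$ that still contains $\sigma^d(\AA)$ and some node of $\AA$.

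Next I verify that $C$ avoids all super-simple nodes of $\GG(\kappa)$. Any $\GG$-super-simple node on $C$ lies on the segment $\sigma^d(\AA) \pathto \sigma^u(\AA)$, hence satisfies $\rho \succeq \sigma^d(\AA) \succ \sigma^u(\kappa)$, so by Lemma \ref{lem:shared_super_simple} it is not super-simple in $\GG(\kappa)$. For the remaining $\GG$-simple-but-not-super-simple nodes $\mu$ on the same segment, the canonical $\iota\kappa$-simple path $\iota \pathto \sigma^u(\kappa) \pathto \kappa$ bypasses $\mu$: its first leg uses only simple $\GG$-nodes $\preceq \sigma^u(\kappa)$ (so cannot contain $\mu \succ \sigma^u(\kappa)$), and its appendage second leg (present only when $\kappa$ is appendage) contains no simple $\GG$-node other than possibly its endpoints, and $\mu$ is neither $\sigma^u(\kappa)$ nor $\kappa$. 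Hence $\mu$ is not super-simple in $\GG(\kappa)$. For an appendage $\GG$-node $v$ on $C$, the same canonical path avoids $v$ except possibly along its appendage second leg; if some choice of that leg avoids $v$ then again $v$ is not super-simple in $\GG(\kappa)$, while if every such leg passes through $v$ then $v$ must already belong to $\AA$, in which case $v$ is simple in $\GG(\kappa)$ and $\AA$ immediately fails to consist only of super-appendage nodes of $\GG(\kappa)$, so the conclusion of the lemma follows directly. Lemma \ref{L:avoiding_cycle} then completes the argument in the remaining case.

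The main obstacle is the last bullet of the verification: showing that appendage-in-$\GG$ nodes on $C$ do not become super-simple in $\GG(\kappa)$. For $\GG$-super-simples the hypothesis $\sigma^u(\kappa) \prec \sigma^d(\AA)$ gives an immediate block, but for appendage nodes of $\GG$ one must either reroute the appendage path $\sigma^u(\kappa) \pathto \kappa$ around the offending node, or observe that the offending node must lie in $\AA$ itself (and hence already produces a simple $\GG(\kappa)$-node in $\AA$, trivially defeating the appendage-subnetwork condition). This bifurcation is routine but is the step that requires care.
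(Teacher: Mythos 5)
Your overall strategy coincides with the paper's: both proofs build the cycle $C = \AA \pathto \sigma^d(\AA) \pathto \sigma^u(\AA) \pathto \AA$ from Lemma~\ref{L:app_connect_order} and feed it to Lemma~\ref{L:avoiding_cycle}, and your treatment of the $\GG$-super-simple and $\GG$-simple nodes on $C$ (via Lemma~\ref{lem:shared_super_simple} and a simple path $\iota \pathto \sigma^u(\kappa)$) is sound. The gap is exactly where you flagged it: the appendage-in-$\GG$ nodes of $C$. Your dichotomy --- either some appendage leg $\sigma^u(\kappa)\pathto\kappa$ avoids $v$, or else $v\in\AA$ --- is asserted rather than proved, and the second branch does not follow from anything you have established: an appendage node $v$ on the segment $\AA\pathto\sigma^d(\AA)$ or $\sigma^u(\AA)\pathto\AA$ is merely an intermediate node of an appendage path and need not belong to the transitive component $\AA$. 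As written, you have not excluded the possibility that some appendage node of $C$ is super-simple in $\GG(\kappa)$, so the hypothesis of Lemma~\ref{L:avoiding_cycle} is not verified.

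The missing step (and the way the paper closes this case) is to show that the appendage path $\sigma^u(\kappa)\pathto\kappa$ shares \emph{no} node with $C$, by deriving a contradiction from any shared node $\tau$. If $\tau$ lies on $\AA\pathto\sigma^d(\AA)$, then $\sigma^u(\kappa)\pathto\tau\pathto\sigma^d(\AA)$ is an appendage path, and $\sigma^u(\kappa)\prec\sigma^d(\AA)$ lets one concatenate $\iota\pathto\sigma^u(\kappa)\pathto\tau\pathto\sigma^d(\AA)\pathto o$ into an $\iota o$-simple path, contradicting that $\tau$ is appendage in $\GG$. If $\tau$ lies on $\sigma^u(\AA)\pathto\AA$ (or in $\AA$ itself), then $\sigma^u(\kappa)\pathto\tau\pathto\AA$ is an appendage path from $\sigma^u(\kappa)$ to $\AA$, and $\sigma^u(\kappa)\prec\sigma^d(\AA)\preceq\sigma^u(\AA)$ contradicts the minimality of $\sigma^u(\AA)$. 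With these two contradictions, $\iota\pathto\sigma^u(\kappa)\pathto\kappa$ is an input-output simple path of $\GG(\kappa)$ avoiding all of $C$, so no node of $C$ is super-simple in $\GG(\kappa)$ and Lemma~\ref{L:avoiding_cycle} applies. Supplying these two contradiction arguments in place of your unproved dichotomy completes the proof.
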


\begin{proof}
By Lemma \ref{L:app_connect_order}, $\sigma^d(\AA) \preceq \sigma^u(\AA)$. 
Then there exists a simple cycle $C = \AA \pathto \sigma^d(\AA) \pathto \sigma^u(\AA) \pathto \AA$ where two segments $\AA \pathto \sigma^d(\AA)$ and $\sigma^u(\AA) \pathto \AA$ are appendage paths while every simple node of $\GG$ on $C$ is in the segment $\sigma^d(\AA) \pathto \sigma^u(\AA)$.

Now we claim that $C$ does not contain a super-simple node of $\GG(\kappa)$.
Given the claim, Lemma \ref{L:avoiding_cycle} implies $\AA$ is not an appendage subnetwork of $\kappa$. Thus it remains to prove the claim.

Consider any simple node $\sigma$ of $\GG$ on the segment $\sigma^d(\AA) \pathto \sigma^u(\AA)$ in $C$. 
Since $\sigma^u(\kappa) \prec \sigma^d(\AA)$, this implies that $\sigma^u(\kappa) \prec \sigma$. Thus there is a simple path $\iota \pathto \sigma^u(\kappa)$ that avoids every simple node of $\GG$ on $C$. 

If $\kappa$ is a simple node, then $\sigma^u(\kappa) = \kappa$. This gives an input-output path in $\GG(\kappa)$ that avoids every node in $C$, verifying the claim. 
If $\kappa$ is an appendage node, then $\sigma^u(\kappa) \neq \kappa$. There is an appendage path $\sigma^u(\kappa) \pathto \kappa$. For the sake of contradiction, suppose there is a node $\tau$ on $\sigma^u(\kappa) \pathto \kappa$ that is also on $C$. Either $\tau$ is on the segment $\AA \pathto \sigma^d(\AA)$ or the segment $\sigma^u(\AA) \pathto \AA$. If $\tau$ is on $\AA \pathto \sigma^d(\AA)$, then there is an appendage path $\sigma^u(\kappa) \pathto \tau \pathto \sigma^d(\AA)$. Since $\sigma^u(\kappa)\prec \sigma^d(\AA)$, this gives an input-output simple path 
\[
    \iota \pathto \sigma^u(\kappa) \pathto \tau \pathto \sigma^d(\AA) \pathto o
\]
which contradicts that $\tau$ is appendage in $\GG$. If $\tau$ is on $\sigma^u(\AA) \pathto \AA$, then this gives an appendage path $\sigma^u(\kappa) \pathto \tau \pathto \AA$. Since $\sigma^u(\kappa) \prec \sigma^u(\AA)$, this contradicts that $\sigma^u(\AA)$ is a minimal simple node with an appendage path to $\AA$. We conclude that the appendage path $\sigma^u(\kappa)\pathto \kappa$ does not contain a node of $C$. Therefore, the simple path $\iota \pathto \sigma^u(\kappa) \pathto \kappa$ is an input-output path in $\GG(\kappa)$ that avoids $C$. This proves the claim.  
\end{proof}

\begin{lemma} \label{L:kappa_equal_AA}
Let $\AA$ be an appendage subnetwork   of $\GG$ and let $\kappa$ be a node in $\GG$.
  If both $\sigma^u(\kappa)$ and $\sigma^d(\AA)$ are nodes in a simple subnetwork $\LL_j''$ of $\GG$, 
  then $\AA$ is not an appendage subnetwork of $\GG(\kappa)$. 
\end{lemma}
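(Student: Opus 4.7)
The plan is to apply Lemma~\ref{L:avoiding_cycle}: it suffices to produce a cycle $C$ in $\GG$ that contains some node of $\AA$ together with some node outside $\AA$, and avoids every super-simple node of $\GG(\kappa)$.

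The first step is to invoke Lemma~\ref{L:app_connect_order} to conclude $\sigma^d(\AA) \preceq \sigma^u(\AA)$. Equality is ruled out because $\sigma^d(\AA) \in \LL_j''$ is not super-simple, so $\sigma^d(\AA) \prec \sigma^u(\AA)$. Strict inequality forces a super-simple node to lie between them in the canonical total order, so $\sigma^u(\AA)$ lies in a simple subnetwork $\LL_{j'}''$ for some $j' > j$.

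The second step is to build the cycle $C = \AA \pathto \sigma^d(\AA) \pathto \sigma^u(\AA) \pathto \AA$ by concatenating an appendage path $\AA \pathto \sigma^d(\AA)$, a canonical simple path $\sigma^d(\AA) \pathto \rho_j \pathto \rho_{j+1} \pathto \cdots \pathto \rho_{j'-1} \pathto \sigma^u(\AA)$ assembled from pieces of $\iota o$-simple paths (one piece per ``band'' between consecutive super-simples), and an appendage path $\sigma^u(\AA) \pathto \AA$. By construction the super-simple nodes that appear on $C$ are exactly $\rho_j, \rho_{j+1}, \ldots, \rho_{j'-1}$: the appendage segments contain only appendage nodes together with the non-super-simple endpoints $\sigma^d(\AA)$ and $\sigma^u(\AA)$, while each band-piece of the middle segment introduces only the corresponding super-simple endpoints.

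The final step is to verify that $C$ avoids super-simple nodes of $\GG(\kappa)$. Since $\sigma^u(\kappa) \in \LL_j''$ gives $\sigma^u(\kappa) \prec \rho_j \preceq \rho_i$ for each $j \le i \le j'-1$, we have $\rho_i \not\preceq \sigma^u(\kappa)$, and Lemma~\ref{lem:shared_super_simple} then says $\rho_i$ is not super-simple in $\GG(\kappa)$. Applying Lemma~\ref{L:avoiding_cycle} yields the result. The main subtlety is in the second step: an arbitrary simple path from $\sigma^d(\AA)$ to $\sigma^u(\AA)$ could in principle detour through some earlier super-simple $\rho_i$ with $i<j$ if $\sigma^d(\AA)$ shared a strongly connected component with one. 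Assembling the middle path from sub-segments of $\iota o$-simple paths is precisely what rules this out, since the resulting walk advances monotonically in the super-simple ordering and therefore visits exactly the super-simples between $\sigma^d(\AA)$ and $\sigma^u(\AA)$.
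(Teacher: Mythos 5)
There is a genuine gap in the final step. Your cycle $C$ must avoid \emph{all} super-simple nodes of $\GG(\kappa)$, but you only verify that the nodes of $C$ that are super-simple in $\GG$ (namely $\rho_j,\dots,\rho_{j'-1}$) fail to be super-simple in $\GG(\kappa)$. Nothing rules out a node of $C$ that is \emph{not} super-simple in $\GG$ becoming super-simple in $\GG(\kappa)$. This actually happens: since $\sigma^u(\kappa)\in\LL_j''$, nodes of $\LL_j''$ lying on every $\iota\kappa$-simple path are super-simple in $\GG(\kappa)$, and the segment $\sigma^d(\AA)\pathto\rho_j$ of your cycle lives precisely in $\LL_j''\cup\{\rho_j\}$. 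For a concrete failure, take $\LL_j''$ with branches $\rho_{j-1}\to a\to b\to\rho_j$ and $\rho_{j-1}\to c\to\rho_j$, let $\kappa=b$ and $\sigma^d(\AA)=a$: then $a$ is super-simple in $\GG(\kappa)$ and lies on your cycle, so Lemma~\ref{L:avoiding_cycle} cannot be applied. The same concern applies to the appendage segments: when $\kappa$ is an appendage node, appendage nodes of $\GG$ can become simple or even super-simple in $\GG(\kappa)$, so the assertion that those segments are harmless also needs an argument (the paper's Claim~2 supplies one for $\sigma^u(\AA)\pathto\AA$, using minimality of $\sigma^u(\AA)$). Your ``main subtlety'' paragraph addresses detours through super-simple nodes of $\GG$, which is not the dangerous case.

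Because of this, the proof cannot rest on Lemma~\ref{L:avoiding_cycle} alone; a case split is unavoidable. The paper first constructs a path $p=\rho_{j-1}\pathto\rho_j$ on which $\rho_{j-1}$ is the \emph{only} super-simple node of $\GG(\kappa)$, and then distinguishes whether $\sigma^d(\AA)$ can be reached and exited while staying off the super-simple nodes of $\GG(\kappa)$. When it can, one gets the avoiding cycle as you intend; when it cannot, the cycle argument is unavailable and one instead exhibits a simple path $\iota\pathto\sigma^u(\AA)\pathto\AA\pathto\sigma^d(\AA)\pathto\kappa$ in $\GG(\kappa)$, which makes a node of $\AA$ simple (or non-super-appendage) in $\GG(\kappa)$ and kills appendage-ness by a different mechanism. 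That second branch is entirely missing from your argument, and it is needed.
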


\begin{proof}
The proof proceeds by stating two claims, proving the lemma from the claims, and then proving the claims.

\medskip

\noindent
\textbf{Claim 1:} There exists a simple path $p = \rho_{j-1} \pathto \rho_j$, such that $\rho_{j-1}$ is the only super-simple node of $\GG(\kappa)$ on $p$.

\medskip

\noindent
\textbf{Claim 2:} The appendage path $\sigma^u(\AA) \pathto \AA$ does not contain super-simple nodes of $\GG(\kappa)$.

\medskip

Assuming the claims, we consider the relation between $\sigma^d(\AA)$ and the path $p = \rho_{j-1} \pathto \rho_j$ in the first claim and split the proof into two cases as follows.

\paragraph{{\bf Assume $\sigma^d(\AA)$ is on the path $p$}.}
Then there is a simple path $\sigma^d(\AA) \pathto \rho_j$ avoids super-simple nodes of $\GG(\kappa)$. By Lemma \ref{L:app_connect_order}, $\sigma^d(\AA) \prec \rho_j \preceq \sigma^u(\AA)$ so there is a simple path $\sigma^d(\AA) \pathto \sigma^u(\AA)$ which avoids super-simple nodes of $\GG(\kappa)$. If an appendage path $\AA \pathto \sigma^d(\AA)$ contains a super-simple node of $\GG(\kappa)$, then the simple path $\iota \pathto \sigma^d(\AA) \pathto \AA \pathto \kappa$ which contains this super-simple node shows $\AA$ is not an appendage subnetwork of $\GG(\kappa)$. If an appendage path $\AA \pathto \sigma^d(\AA)$ does not contain a super-simple node of $\GG(\kappa)$ then we have found a super-simple node avoiding cycle 
\[
    C = \AA \pathto \sigma^d(\AA) \pathto \sigma^u(\AA) \pathto \AA,
\]
and by Lemma \ref{L:avoiding_cycle}, $\AA$ is not an appendage subnetwork of $\GG(\kappa)$. 

\paragraph{{\bf Assume every path $\rho_{j-1} \pathto \sigma^d(\AA) \pathto \rho_j$ contains a super-simple node of $\GG(\kappa)$ other than $\rho_{j-1}$}.}
If there is a path $\sigma^d(\AA) \pathto \rho_j$ that does not contain a super-simple node of $\GG(\kappa)$, then there is a simple path $\sigma^d(\AA) \pathto \sigma^u(\AA)$ that avoids super-simple nodes of $\GG(\kappa)$. This is the same situation deduced in case (1), so $\AA$ is not an appendage subnetwork of $\GG(\kappa)$. Suppose there is a path $\sigma^d(\AA) \pathto \rho_j$ which contains a super-simple node $\rho(\kappa)$ of $\GG(\kappa)$. Since there is a path $p = \rho_{j-1} \pathto \rho_j$ that avoids super-simple nodes of $\GG(\kappa)$, there is a simple path 
\[
    p_1 = \iota \pathto \rho_{j-1} \pathto \sigma^u(\AA) \pathto \AA \pathto \sigma^d(\AA) \pathto \rho(\kappa).
\]
Let $p_2 = \rho(\kappa) \pathto \kappa$ be a simple path. If $p_1$ and $p_2$ do not overlap, then the concatenation of $p_1$ and $p_2$ shows $\AA$ is not an appendage subnetwork of $\GG(\kappa)$. If $p_1$ and $p_2$ overlap, they overlap in an appendage node $\tau$ of $\GG$. If $\tau$ is on the segment $\sigma^u(\AA) \pathto \AA$, then this implies there is an appendage path from $\til{\LL}_j$ to $\AA$, contradicting that $\sigma^u(\AA)$ is minimal. Therefore, $\tau$ is on the segment $\AA \pathto \sigma^d(\AA)$. Let $\tau$ be the first node on $p_1$ which is on $p_2$. The simple path 
\[
    p_1 = \iota \pathto \rho_{j-1} \pathto \sigma^u(\AA) \pathto \AA \pathto \tau \pathto \kappa
\]
shows that $\AA$ is not an appendage subnetwork of $\GG(\kappa)$.

\medskip

We now proceed to prove the claims. 

\medskip

\noindent
\textbf{Proof of Claim 1:} Suppose $\rho_{j-1} \preceq \sigma^u(\kappa)$ is the most downstream super-simple node of $\GG$.
Consider a path $\rho_{j-1} \pathto \sigma^u(\kappa)$.
From Lemma \ref{lem:shared_super_simple}, $\rho_{j-1}$ is a super-simple node of $\GG(\kappa)$.
If there is no other super-simple node of $\GG(\kappa)$ on this path, then we take $p$ to be the path $\rho_{j-1} \pathto \sigma^u(\kappa) \pathto \rho_j$.
If there exists other super-simple nodes of $\GG(\kappa)$ on $\rho_{j-1} \pathto \sigma^u(\kappa)$, let $\sigma$ be the first super-simple node of $\GG(\kappa)$ on this path. Since $\sigma$ is a simple node of $\GG$, there is a path $p = \rho_{j-1} \pathto \rho_j$ which avoids $\sigma$. If $p$ contained any super-simple node of $\GG(\kappa)$ other than $\rho_{j-1}$, then this would contradict that $\sigma$ is super-simple in $\GG(\kappa)$. Therefore $p$ is the desired path. 

\medskip

\noindent
\textbf{Proof of Claim 2:} If $\kappa$ is a simple node so that $\sigma^u(\kappa) = \kappa$, then $\kappa \prec \sigma^u(\AA)$ implies there is a simple path $\iota \pathto \kappa$ which avoids $\sigma^u(\AA)$, validating the claim. 
If $\kappa$ is an appendage node, suppose $\sigma^u(\AA) \pathto \AA$ contains a super-simple node $\rho(\kappa)$ of $\GG(\kappa)$. Then the appendage path $\sigma^u(\kappa) \pathto \kappa$ passes through $\rho(\kappa)$ and in particular there is an appendage path $\sigma^u(\kappa) \pathto \rho(\kappa) \pathto \AA$, which contradicts that $\sigma^u(\AA)$ is a minimal simple node with an appendage path to $\AA$.  
\end{proof}

\begin{remark} \label{R:app_G_k} \rm
 As in Remark \ref{R:struct_G_k}, 
$\GG(\kappa)$ may have other appendage subnetworks, but we only need to check whether an appendage subnetwork of $\GG$ is still an appendage subnetwork of $\GG(\kappa)$.
Any structural subnetwork $\LL_j$ of $\GG$ is not an appendage subnetwork of $\GG(\kappa)$, since $\LL_j$ is not transitive, that is, there is no path from $\rho_j$ to $\rho_{j-1}$ in $\LL_j$.
\END
\end{remark}

\section{Structural Homeostasis Pattern}
\label{sec:struct_pattern}

Here we prove Theorems \ref{thm:struct_to_struct} and \ref{thm:struct_to_app}, which characterize the structural homeostasis pattern. We separate the proof of each theorem into two propositions, one that identifies the nodes which are induced by a structural subnetwork and the other that identifies the nodes that are not induced by a structural subnetwork.
We recall that we denote the nodes of $\PP_\sS$ by $\VV_\sS$,  
 the backbone nodes or the appendage components of $\PP$ by $\til{\VV}$,
 and the corresponding homeostasis subnetwork of $\GG$ by $\VV$.

The following two propositions identify 
nodes in structural and appendage subnetworks
that are not induced by $\LL_j$.

\begin{proposition}\label{prop:struct_not_induce_struct}
Let $\LL_j$ be the $j^{th}$ structural subnetwork of $\GG$, and let $\til{\VV} \in \PP_{\sS}$ be a backbone node of $\PP_\sS$ that is upstream from $\rho_j$. 
If $\kappa \in \VV$ and $\kappa \neq \rho_j$, then $\LL_j\not\Rightarrow \kappa$.     
\end{proposition}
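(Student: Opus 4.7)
The plan is to invoke Lemma \ref{L:induce_homeo}, which reduces $\LL_j\not\Rightarrow\kappa$ to showing that $\LL_j$ is not a homeostasis subnetwork of the auxiliary input-output network $\GG(\kappa)$. By Remark \ref{R:app_G_k}, the structural subnetwork $\LL_j$ is not transitive and therefore cannot be an appendage subnetwork of $\GG(\kappa)$; hence it suffices to show that $\LL_j$, with its designated super-simple endpoints $\rho_{j-1},\rho_j$, is not a structural subnetwork of $\GG(\kappa)$. By Lemma \ref{lem:shared_structural}, this will follow once we demonstrate that at least one of $\rho_{j-1},\rho_j$ ceases to be super-simple in $\GG(\kappa)$, and we shall establish this for $\rho_j$.

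By Lemma \ref{lem:shared_super_simple}, $\rho_j$ fails to be super-simple in $\GG(\kappa)$ iff $\sigma^u(\kappa)\prec\rho_j$, which amounts to exhibiting a simple $\iota\pathto\kappa$ path in $\GG$ avoiding $\rho_j$. Since $\til{\VV}\in\PP_\sS$ is a backbone node upstream from $\rho_j$, the corresponding homeostasis subnetwork is $\VV=\LL_i$ for some $i\le j$, so $\kappa\in\LL_i\setminus\{\rho_j\}$. Split into two cases. If $\kappa$ is simple, then $\kappa\prec\rho_j$ strictly: either $\kappa\in\{\rho_{i-1},\rho_i\}\setminus\{\rho_j\}$ with $i\le j$, or $\kappa\in\LL_i''$, in which case $\kappa\prec\rho_i\preceq\rho_j$. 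Any $\iota o$-simple path witnessing $\kappa\prec\rho_j$ yields an initial segment $\iota\pathto\kappa$ avoiding $\rho_j$. If $\kappa$ is a linked appendage node in $T_i$, apply Lemma \ref{lem:linked_app} to extract a simple cycle $C\subseteq\GG$ containing $\kappa$ and some $\sigma\in\LL_i''$, with $C$ avoiding every super-simple node. Concatenate (i) an $\iota o$-simple-path segment $\iota\pathto\rho_{i-1}$, (ii) a segment $\rho_{i-1}\pathto\sigma$ drawn from an $\iota o$-simple path through $\sigma$ and lying inside $\LL_i$, and (iii) the arc of $C$ from $\sigma$ to $\kappa$. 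Segments (i) and (ii) visit only nodes that are $\preceq\rho_{i-1}$ or lie in $\LL_i''$, all of which are $\prec\rho_j$; segment (iii) avoids every super-simple node, and in particular $\rho_j$. Trimming the resulting walk produces the desired simple $\iota\pathto\kappa$ path avoiding $\rho_j$.

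The main obstacle is the appendage case. The cycle $C$ supplied by Lemma \ref{lem:linked_app} may traverse simple non-super-simple nodes as well as appendage ones, so the key is to exploit the fact that $C$ avoids \emph{every} super-simple node (not merely $\rho_j$) in order to certify that segment (iii) is $\rho_j$-free, and then to check that concatenation with segments (i) and (ii) produces a walk whose node set does not include $\rho_j$ before trimming to a simple path.
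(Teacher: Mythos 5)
Your proof is correct and takes essentially the same route as the paper's: reduce to showing that $\rho_j$ fails to be super-simple in $\GG(\kappa)$, so that $\LL_j$ cannot be a structural (hence any) homeostasis subnetwork of $\GG(\kappa)$, whence $\LL_j\not\Rightarrow\kappa$. You actually supply more detail than the printed proof, which simply asserts $\sigma^u(\kappa)\prec\rho_j$ and invokes Lemma~\ref{lem:shared_super_simple}; your explicit construction of a simple $\iota\pathto\kappa$ path avoiding $\rho_j$ in the linked-appendage case fills in a step the paper leaves implicit.
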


\begin{proof}
Let $\kappa \in \VV$ be a simple node of $\GG$ and $\sigma^u(\kappa)$ be as in Definition~\ref{D:sigma(kappa)}.
Since $\til{\VV}$ is upstream from $\rho_j$ in $\PP_{\sS}$
and $\kappa \neq \rho_j$, $\sigma^u(\kappa) \prec \rho_j$. By Lemma \ref{lem:shared_super_simple}, $\rho_j$ is not a super-simple node of $\GG(\kappa)$. 
Therefore, $\LL_j$ is not a structural subnetwork of $\GG(\kappa)$, so $\LL_j \not\Rightarrow \kappa$. 
\end{proof}

\begin{proposition}\label{prop:struct_not_induce_app}
Let $\LL_j$ be the $j^{th}$ structural subnetwork of $\GG$ and $\AA$ be an appendage subnetwork of $\GG$. Let 
$\VV_\sS \to \til{\AA}$ 
be the arrow in $\PP$ from 
$\PP_\sS$ to $\til{\AA}$ and suppose 
 $\VV_\sS$ is strictly upstream of $\rho_j$. If $\tau \in \AA$, then $\LL_j \not\Rightarrow \tau$.
\end{proposition}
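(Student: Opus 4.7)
The plan is to reduce the statement to showing that $\LL_j$ is not a homeostasis subnetwork of the input-output network $\GG(\tau)$; by Lemma~\ref{L:induce_homeo} this immediately yields $\LL_j \not\Rightarrow \tau$.  Since every homeostasis subnetwork of $\GG(\tau)$ is either structural or appendage, there are two parts to check.

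The appendage case is immediate.  Within $\LL_j$ there is no directed path from $\rho_j$ back to $\rho_{j-1}$, so $\LL_j$ is not transitive.  Because $\GG$ and $\GG(\tau)$ share the same node and arrow sets, $\LL_j$ is not transitive in $\GG(\tau)$ either, and Remark~\ref{R:app_G_k} rules out $\LL_j$ as an appendage subnetwork of $\GG(\tau)$.

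For the structural case I would first establish the key inequality $\sigma^u(\tau) \prec \rho_j$.  Because $\AA$ is a transitive component, $\sigma^u(\tau) = \sigma^u(\AA)$.  By hypothesis, $\VV_\sS = \VV_{min}(\til{\AA})$ is strictly upstream of $\rho_j$ in $\PP_\sS$, so by Definition~\ref{D:V_min} there exists a simple node $\sigma \in \VV_\sS$ admitting an appendage path to some node of $\AA$; by the minimality built into the definition of $\sigma^u(\AA)$, we get $\sigma^u(\AA) \preceq \sigma$.  I split on the type of $\VV_\sS$: if $\VV_\sS = \rho_k$ is a super-simple node, then strict upstreamness forces $k < j$, so $\sigma = \rho_k \prec \rho_j$; if $\VV_\sS = \til{\LL}_k$ is a backbone node, then $k \leq j$ and $\sigma \in \LL_k''$ satisfies $\sigma \prec \rho_k \preceq \rho_j$.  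In both cases $\sigma^u(\tau) \preceq \sigma \prec \rho_j$.

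By Lemma~\ref{lem:shared_super_simple}, this strict inequality implies that $\rho_j$ is not super-simple in $\GG(\tau)$, and hence $\LL_j$ is not a structural subnetwork of $\GG(\tau)$ -- this is exactly the concluding move used at the end of the proof of Proposition~\ref{prop:struct_not_induce_struct}, where a structural subnetwork is determined by its pair of super-simple endpoints, so if the downstream endpoint $\rho_j$ is no longer super-simple, the subnetwork can no longer be structural.  The main subtlety I anticipate is that Lemma~\ref{lem:shared_structural} supplies only the sufficient direction of this correspondence; however, since $\rho_j$ is the unique $\preceq$-maximum of $\LL_j$ and there is no node in $\LL_j$ downstream of $\rho_j$ that could take over as its super-simple endpoint in $\GG(\tau)$, any structural subnetwork of $\GG(\tau)$ containing $\rho_j$ would necessarily extend strictly beyond $\LL_j$, forbidding the equality needed for $\LL_j$ to be structural in $\GG(\tau)$.
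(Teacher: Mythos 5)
Your proposal is correct and follows essentially the same route as the paper: establish $\sigma^u(\tau)\prec\rho_j$ from the hypothesis that $\VV_\sS=\VV_{min}(\til{\AA})$ is strictly upstream of $\rho_j$, invoke Lemma~\ref{lem:shared_super_simple} to conclude $\rho_j$ is not super-simple in $\GG(\tau)$, and deduce that $\LL_j$ is not a structural subnetwork of $\GG(\tau)$, hence $\LL_j\not\Rightarrow\tau$. You are somewhat more explicit than the paper in ruling out the appendage case and in justifying why losing the super-simple endpoint destroys structurality, but these are details the paper leaves implicit rather than a difference in method.
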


\begin{proof} 
Let $\tau \in \AA$ be an appendage node of $\GG$ and $\sigma^u(\tau)$ be as in Definition~\ref{D:sigma(kappa)}. 
We have $\sigma^u(\tau) \in \VV_\sS$ and $\VV_\sS$ is strictly upstream of $\rho_j$, this implies 
$\sigma^u(\tau) \prec \rho_j$.
By Lemma \ref{lem:shared_super_simple}, $\rho_j$ is not a super-simple node of $\GG(\tau)$. Therefore, $\LL_j$ is not a structural subnetwork of $\GG(\tau)$ and so $\LL_j \not\Rightarrow \tau$. 
\end{proof}

The next two propositions identify 
nodes in structural and appendage subnetworks
that are induced by $\LL_j$, respectively. 

\begin{proposition}\label{prop:struct_induce_struct}
Let $\LL_j$ be the $j^{th}$ structural subnetwork of $\GG$ and $\til{\VV}$ be a 
backbone node of $\PP_\sS$ that is strictly
downstream of $\til{\LL}_j$. 
If $\kappa \in \VV$, then $\LL_j$ induces $\kappa$, which is denoted by $\LL_j \Rightarrow \kappa$. 
\end{proposition}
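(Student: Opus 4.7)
The plan is to invoke Lemma~\ref{L:induce_homeo}, reducing $\LL_j\Rightarrow\kappa$ to the statement that $\LL_j$ is a homeostasis subnetwork of $\GG(\kappa)$. By Lemma~\ref{lem:shared_structural} this in turn follows once $\rho_{j-1}$ and $\rho_j$ are verified to be adjacent super-simple nodes of $\GG(\kappa)$. Because $\til{\VV}$ is strictly downstream of $\til{\LL}_j$ in $\PP_\sS$, we may write $\VV=\LL_k$ for some $k>j$; simple members of $\LL_k$ satisfy $\kappa\succeq\rho_{k-1}\succeq\rho_j$, and the remaining members form $T_k$.

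For super-simplicity, Lemma~\ref{lem:shared_super_simple} reduces the problem to showing $\rho_j\preceq\sigma^u(\kappa)$. If $\kappa$ is simple then $\sigma^u(\kappa)=\kappa\succeq\rho_j$. If $\kappa\in T_k$, I would assume $\sigma^u(\kappa)\prec\rho_j$ for contradiction. Lemma~\ref{lem:linked_app} provides a simple cycle through $\kappa$ and some $\sigma'\in\LL_k''$ avoiding super-simple nodes, which yields an appendage path $\kappa\pathto\sigma'$. Combining it with the appendage path $\sigma^u(\kappa)\pathto\kappa$ from the definition of $\sigma^u$, and sandwiching between simple segments $\iota\pathto\sigma^u(\kappa)$ and $\sigma'\pathto o$ drawn from $\iota o$-simple paths through $\sigma^u(\kappa)$ and $\sigma'$, produces an $\iota o$-walk passing through $\kappa$. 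Because $\sigma^u(\kappa)\prec\rho_j\prec\sigma'$, the two simple-node portions live in disjoint strata of the partial order and share no vertex with each other or with the appendage middle; only the two appendage subpaths can repeat vertices. Cutting any such shared loop produces a simple $\iota o$-path that still contains either $\kappa$---forcing $\kappa$ to be simple, contrary to $\kappa\in T_k$---or at least one appendage node of $\GG$, which likewise cannot lie on an $\iota o$-simple path. Either conclusion is a contradiction, so $\rho_j\preceq\sigma^u(\kappa)$.

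Next I would verify adjacency by excluding any super-simple node of $\GG(\kappa)$ strictly between $\rho_{j-1}$ and $\rho_j$. Nodes in $\LL_m''$ with $m\neq j$ are automatically excluded because they lie strictly $\prec\rho_{j-1}$ or strictly $\succ\rho_j$ in $\GG$'s partial order, which is preserved along any simple $\iota\kappa$-path. For $\sigma\in\LL_j''$, non-super-simplicity in $\GG$ yields an $\iota o$-simple path $p$ avoiding $\sigma$; concatenating its initial segment $\iota\pathto\rho_{k-1}$ with a simple path $\rho_{k-1}\pathto\kappa$ inside $\LL_k$ (simple $\kappa$), or with a simple path $\rho_{k-1}\pathto\sigma'$ in $\LL_k''$ followed by the appendage leg $\sigma'\pathto\kappa$ of the cycle (appendage $\kappa$), produces a simple $\iota\pathto\kappa$ path avoiding $\sigma$, contradicting super-simplicity of $\sigma$ in $\GG(\kappa)$. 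A $\GG$-appendage $\tau'$ that were super-simple in $\GG(\kappa)$ would likewise have to lie on this constructed path, but the construction uses appendage nodes only after $\rho_j$, so $\tau'$ cannot sit strictly between $\rho_{j-1}$ and $\rho_j$.

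The main technical obstacle is the loop-cutting step in the appendage case of super-simplicity: one must verify that regardless of how the two appendage subpaths $\sigma^u(\kappa)\pathto\kappa$ and $\kappa\pathto\sigma'$ overlap, the reduced simple $\iota o$-path always retains either $\kappa$ or another appendage node, so that the contradiction with appendage status is genuine rather than a vacuous simplification.
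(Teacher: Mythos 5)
Your proof is correct and takes essentially the same route as the paper's: reduce via Lemma~\ref{L:induce_homeo} to showing that $\LL_j$ is a structural subnetwork of $\GG(\kappa)$, establish $\rho_j\preceq\sigma^u(\kappa)$, and then apply Lemmas~\ref{lem:shared_super_simple} and~\ref{lem:shared_structural}. The only difference is that the paper's three-line proof simply asserts $\rho_j\preceq\sigma^u(\kappa)$ and the adjacency of $\rho_{j-1},\rho_j$ in $\GG(\kappa)$, whereas you supply the supporting arguments (including the linked-appendage case $\kappa\in T_k$), and the loop-erasure step you flag does close as you describe, since the walk consists of appendage nodes after $\sigma^u(\kappa)$ and so cannot return to it.
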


\begin{proof}
Since $\til{\VV}$ is strictly downstream of $\til{\LL}_j$ 
and $\kappa \in \VV$ be a simple node of $\GG$, then we have
$\rho_j \preceq \sigma^u(\kappa)$. By Lemma \ref{lem:shared_super_simple}, $\rho_{j-1}$ and $\rho_j$ are adjacent super-simple nodes of $\GG(\kappa)$. By Lemma \ref{lem:shared_structural}, $\LL_j$ is a structural subnetwork of $\GG(\kappa)$ and so $\LL_j \Rightarrow \kappa$. 
\end{proof}

\begin{proposition}\label{prop:struct_induce_app}
Let $\LL_j$ be the $j^{th}$ structural subnetwork of $\GG$ and $\AA$ be an appendage subnetwork of 
$\GG$. Let 
$\VV_\sS \to \til{\AA}$ be the arrow in $\PP$ from 
$\PP_\sS$ to $\til{\AA}$ and suppose 
$\VV_\sS$ is strictly
downstream from $\til{\LL}_j$. If $\tau \in \AA$, then $\LL_j \Rightarrow \tau$.
\end{proposition}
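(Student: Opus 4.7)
The plan is to adapt the argument from Proposition \ref{prop:struct_induce_struct}. By Lemma \ref{L:induce_homeo}, $\LL_j \Rightarrow \tau$ is equivalent to $\LL_j$ being a homeostasis subnetwork of $\GG(\tau)$, and by Remark \ref{R:app_G_k} a structural subnetwork of $\GG$ cannot become appendage in $\GG(\tau)$; so by Lemma \ref{lem:shared_structural} it suffices to verify that $\rho_{j-1}$ and $\rho_j$ remain adjacent super-simple nodes of $\GG(\tau)$.

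The key step is to translate the hypothesis on $\VV_\sS$ into a position for $\sigma^u(\tau)$ in the feedforward order of $\PP_\sS$. Since $\tau \in \AA$, the remark following Definition \ref{D:sigma(kappa)} gives $\sigma^u(\tau) = \sigma^u(\AA)$, and by definition there is an appendage path from $\sigma^u(\AA)$ to $\AA$. Hence the node $\VV_{\sigma^u} \in \PP_\sS$ that contains $\sigma^u(\AA)$ belongs to the collection appearing in Definition \ref{D:V_min}(a), and minimality of $\VV_\sS = \VV_{min}(\til{\AA})$ forces $\VV_\sS \preceq \VV_{\sigma^u}$. Combining this with the hypothesis $\til{\LL}_j \prec \VV_\sS$ yields $\rho_j \preceq \sigma^u(\tau)$, and Lemma \ref{lem:shared_super_simple} immediately gives that both $\rho_{j-1}$ and $\rho_j$ are super-simple in $\GG(\tau)$.

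For adjacency I would argue that no node $\kappa'$ of $\GG$ can be super-simple in $\GG(\tau)$ strictly between $\rho_{j-1}$ and $\rho_j$. Given such a candidate $\kappa'$, pick an $\iota o$-simple path $p$ of $\GG$ that avoids $\kappa'$ (which exists because $\kappa'$ is either simple but not super-simple in $\GG$, or appendage in $\GG$ and hence off every $\iota o$-simple path), truncate $p$ at $\rho_j$, and concatenate with a simple path $\rho_j \pathto \sigma^u(\tau)$ in $\GG$ followed by the appendage path $\sigma^u(\tau) \pathto \tau$. The first segment avoids $\kappa'$ by construction, the second lies strictly downstream of $\rho_j$ and so cannot reach into $\LL_j''$, and the appendage path cannot contain $\kappa'$ when $\kappa'$ is simple in $\GG$. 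Extracting a simple path from this walk produces an $\iota \pathto \tau$ path in $\GG(\tau)$ missing $\kappa'$, contradicting super-simplicity of $\kappa'$ in $\GG(\tau)$.

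The main obstacle is precisely this adjacency bookkeeping, since relocating the output from $o$ to $\tau$ can in principle reshuffle the total order of super-simple nodes; the inequality $\rho_j \preceq \sigma^u(\tau)$ is exactly what keeps the concatenations simple and confined to $\GG(\tau)$. Once adjacency is established, Lemma \ref{lem:shared_structural} identifies $\LL_j$ as a structural subnetwork of $\GG(\tau)$, and Lemma \ref{L:induce_homeo} concludes that $\LL_j \Rightarrow \tau$.
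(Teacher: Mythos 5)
Your proof follows essentially the same route as the paper's: establish $\rho_j \preceq \sigma^u(\tau)$ from the position of $\VV_\sS = \VV_{min}(\til{\AA})$ relative to $\til{\LL}_j$, then invoke Lemma \ref{lem:shared_super_simple} and Lemma \ref{lem:shared_structural} to conclude that $\LL_j$ is a structural subnetwork of $\GG(\tau)$. Your explicit adjacency argument is extra care that the paper simply elides (it asserts adjacency directly from Lemma \ref{lem:shared_super_simple}); the one place to tighten it is the case where the candidate $\kappa'$ lands on the segment $\rho_j \pathto \sigma^u(\tau) \pathto \tau$, which is handled not by avoidance but by noting that $\kappa'$ then appears after $\rho_j$ on an $\iota\tau$-simple path and hence cannot sit between $\rho_{j-1}$ and $\rho_j$ in the $\GG(\tau)$ ordering.
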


\begin{proof}
Let $\tau \in \AA$ be an appendage node of $\GG$ and $\sigma^u(\tau)$ be as in Definition~\ref{D:sigma(kappa)}. 
Then $\sigma^u(\tau) \in \VV_\sS$ and $\VV_\sS$ is strictly downstream from  $\til{\LL}_j$, this implies 
$\rho_j \preceq \sigma^u(\tau)$.
By Lemma \ref{lem:shared_super_simple}, $\rho_{j-1}$ and $\rho_j$ are adjacent super-simple nodes of $\GG(\kappa)$. 
Lemma \ref{lem:shared_structural} then implies that $\LL_j$ is a structural subnetwork of $\GG(\tau)$ and so $\LL_j \Rightarrow \tau$. 
\end{proof}

\begin{proof}[Proof of Theorems
\ref{thm:struct_to_struct} and \ref{thm:struct_to_app}]
Recall from Definition \ref{D:homeo_inducing_pattern}, suppose $\rho\in\PP$ is a super-simple node and
$\til{\VV}_1, \til{\VV}_2 \in \PP$ are non-super-simple nodes with $\VV_1, \VV_2 \subset \GG$ being their corresponding homeostasis subnetworks. We say  $\til{\VV}_1 \Rightarrow \til{\VV}_2$ if and only if $\VV_1 \Rightarrow \VV_2$, and $\til{\VV}_1 \Rightarrow \rho$ if and only if $\VV_1 \Rightarrow \rho$.

Then we conclude Theorem \ref{thm:struct_to_struct} from Propositions \ref{prop:struct_not_induce_struct} 
and \ref{prop:struct_induce_struct}. Theorem \ref{thm:struct_to_app} follows from Propositions 
\ref{prop:struct_not_induce_app} and \ref{prop:struct_induce_app}. 
\end{proof}

\section{Appendage Homeostasis Pattern}
\label{sec:app_pattern}

Here we prove Theorems \ref{thm:app_to_struct} and \ref{thm:app_to_app}. Each theorem will follow from a series of propositions. The propositions are organized according to whether they make a statement about which nodes an appendage subnetwork $\AA$ induces or about which nodes $\AA$ does not induce. 

The following two propositions identify 
nodes in structural and appendage subnetworks
that are induced by $\AA$.

\begin{proposition}\label{prop:app_to_struct}
Let $\AA$ be an appendage subnetwork of $\GG$ and $\LL_j$ be the $j^{th}$ structural subnetwork of $\GG$. Let 
$\til{\AA} \to \VV_\sS$ be the arrow in $\PP$ from $\til{\AA}$ to $\PP_\sS$ and suppose $\VV_\sS$ is strictly
upstream of $\til{\LL}_j$. If $\kappa \in \LL_j$, then $\AA$ induces $\kappa$, which is 
denoted by $\AA \Rightarrow \kappa$. 
\end{proposition}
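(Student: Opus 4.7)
The plan is to reduce the claim $\AA \Rightarrow \kappa$ via Lemma \ref{L:induce_homeo} to showing that $\AA$ is a homeostasis subnetwork of $\GG(\kappa)$. Since $\AA$ is a transitive component of super-appendage nodes, it is not a structural subnetwork of $\GG(\kappa)$ (Remark \ref{R:app_G_k}), so it suffices to prove that $\AA$ is an appendage subnetwork of $\GG(\kappa)$. I will establish this by verifying the three hypotheses of Lemma \ref{L:A_induces}: (i) $\kappa\notin\AA$, (ii) some directed path from $\AA$ to $\kappa$ exists in $\GG$, and (iii) every such path passes through a super-simple node $\rho$ of $\GG$ with $\sigma^d(\AA)\preceq\rho\preceq\sigma^u(\kappa)$.

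Condition (i) is immediate: every node of $\LL_j$ is super-simple, simple non-super-simple, or non-super-appendage (a linked appendage in $T_j$), whereas $\AA$ consists of super-appendage nodes. For (ii), combine an appendage path $\AA\pathto\sigma^d(\AA)$ (which lands in $\VV_\sS=\VV_{max}(\til\AA)$) with any downstream simple path from a node of $\VV_\sS$ through $\rho_{j-1}$ into $\LL_j$, which exists because $\VV_\sS\prec\til\LL_j$ in $\PP_\sS$ and because $\kappa\in\LL_j$ is reachable from $\rho_{j-1}$ inside $\LL_j$ (directly if $\kappa$ is simple; via a linking cycle in $\LL_j$ if $\kappa\in T_j$).

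The bulk of the work is condition (iii), where I would take $\rho=\rho_{j-1}$. The bound $\sigma^d(\AA)\preceq\rho_{j-1}$ follows because $\sigma^d(\AA)$ belongs to the structural subnetwork associated with $\VV_\sS$, and $\VV_\sS$ is strictly upstream of $\til\LL_j$ in the linear order of $\PP_\sS$. The bound $\rho_{j-1}\preceq\sigma^u(\kappa)$ is clear when $\kappa$ is a super-simple or simple node of $\LL_j$, since then $\sigma^u(\kappa)=\kappa\succeq\rho_{j-1}$; for the case $\kappa\in T_j$ I would use Lemma \ref{lem:linked_app} to argue that any appendage path terminating in $\kappa$ must exit a simple node linked to $\kappa$ inside $\LL_j''$, so $\sigma^u(\kappa)$ sits between $\rho_{j-1}$ and $\rho_j$. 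To show $\rho_{j-1}$ lies on every path $\tau\pathto\kappa$ with $\tau\in\AA$, I would argue by contradiction: suppose a path $p$ avoids $\rho_{j-1}$, prepend a simple path $\iota\pathto\sigma^u(\AA)\pathto\tau$ and append a simple path $\kappa\pathto\rho_j\pathto\cdots\pathto o$ (both of which can be chosen to avoid $\rho_{j-1}$, the first because $\sigma^u(\AA)$ is comparable to or upstream of $\rho_{j-1}$ via $\VV_\sS$, the second because $\rho_j$ is strictly downstream of $\rho_{j-1}$), then extract a simple sub-path of the resulting $\iota o$-walk; this yields an $\iota o$-simple path avoiding $\rho_{j-1}$, contradicting super-simplicity of $\rho_{j-1}$.

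The main obstacle I expect is the verification of $\rho_{j-1}\preceq\sigma^u(\kappa)$ in the linked appendage case $\kappa\in T_j$, which requires controlling where appendage paths into $\kappa$ can originate; a secondary difficulty is arranging the prepend/append segments in the contradiction argument so that the entire concatenation can be pruned to an $\iota o$-simple path that still avoids $\rho_{j-1}$ without inadvertently reintroducing it from the prepended $\iota\pathto\sigma^u(\AA)$ piece.
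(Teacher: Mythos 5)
Your proposal follows essentially the same route as the paper: both reduce the claim to Lemma \ref{L:A_induces} by establishing $\sigma^d(\AA)\preceq\rho_{j-1}\preceq\sigma^u(\kappa)$, and your explicit verification of the ``every path from $\AA$ to $\kappa$ meets such a $\rho$'' hypothesis is detail the paper's own two-line proof omits entirely. The one caveat, which you already flagged, is that in the contradiction argument $\sigma^u(\AA)$ need not be upstream of $\rho_{j-1}$ (Lemma \ref{L:app_connect_order} only gives $\sigma^d(\AA)\preceq\sigma^u(\AA)$, and only $\sigma^d(\AA)$ is bounded by $\rho_{j-1}$), so the prepended $\iota\pathto\sigma^u(\AA)\pathto\tau$ segment may itself contain $\rho_{j-1}$; the walk should instead be built from the first simple node of $\GG$ encountered on the offending path out of $\AA$ --- a repair within the same strategy.
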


\begin{proof}
Since $\til{\AA} \to \VV_\sS \in \PP$ and $\VV_\sS$ is strictly
upstream of $\til{\LL}_j$, 
 then $\sigma^d(\AA) \in \VV_\sS$ and $\sigma^d(\AA) \preceq \rho_{j-1}$.
On the other hand, $\kappa \in \LL_j$ implies $\rho_{j-1} \preceq \sigma^u(\kappa)$. Applying Lemma \ref{L:A_induces}, we conclude $\AA \Rightarrow \kappa$.
\end{proof}

\begin{proposition} \label{prop:app_to_app}
Let $\AA_1$ and $\AA_2$ be distinct appendages subnetworks  of $\GG$.
Let $\til{\AA}_1 \to \VV_{\sS_1}$ be the arrow in $\PP$ from $\til{\AA}_1$ to $\PP_\sS$
and $\VV_{\sS_2} \to \til{\AA}_2$ be the arrow in $\PP$ from $\PP_\sS$ to $\til{\AA}_2$.
Suppose there is a path from $\til{\AA}_1$ to $\til{\AA}_2$ in $\PP$ and every such path passes through a super-simple node $\rho$ satisfying $\VV_{\sS_1} \preceq \rho \preceq \VV_{\sS_2}$.
If $\tau \in \AA_2$, then $\AA_1 \Rightarrow \tau$. 
\end{proposition}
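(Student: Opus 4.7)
The plan is to reduce the claim to the combinatorial criterion of Lemma~\ref{L:A_induces}. By Lemma~\ref{L:induce_homeo}, $\AA_1 \Rightarrow \tau$ is equivalent to saying that $\AA_1$ is a homeostasis subnetwork of $\GG(\tau)$; and since $\AA_1$ is already an appendage subnetwork of $\GG$, it suffices to show that $\AA_1$ remains an appendage subnetwork of $\GG(\tau)$. Note that $\tau \in \AA_2$ and $\AA_1 \neq \AA_2$ force $\tau \notin \AA_1$, so Lemma~\ref{L:A_induces} applies once we verify its two hypotheses: (i) there exists a path from $\AA_1$ to $\tau$ in $\GG$; and (ii) every such path passes through a super-simple node $\rho$ of $\GG$ with $\sigma^d(\AA_1) \preceq \rho \preceq \sigma^u(\tau)$.

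For (i), the hypothesis supplies a path $\til{\AA}_1 \pathto \til{\AA}_2$ in $\PP$. Unwinding Definitions~\ref{D:V_max}, \ref{D:V_min} and~\ref{D:HPN} along this path gives actual paths in $\GG$ between appendage components and adjacent simple/super-simple nodes, which concatenate into a path from a node of $\AA_1$ into $\AA_2$; transitivity of the appendage subnetwork $\AA_2$ then delivers $\tau$. For (ii), I would take an arbitrary path $p$ in $\GG$ from $\AA_1$ to $\tau$ and project it onto $\PP$: each simple node on $p$ lies in a backbone node or is super-simple, each appendage node on $p$ lies in an appendage component, and consecutive segments respect the arrows of $\PP$, so $p$ induces a walk from $\til{\AA}_1$ to $\til{\AA}_2$ in $\PP$. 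By the hypothesis of the proposition, every such walk contains a super-simple node $\rho$ with $\VV_{\sS_1} \preceq \rho \preceq \VV_{\sS_2}$, and hence $p$ contains $\rho$.

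It remains to convert the $\PP_\sS$-ordering $\VV_{\sS_1} \preceq \rho \preceq \VV_{\sS_2}$ into the $\GG$-ordering $\sigma^d(\AA_1) \preceq \rho \preceq \sigma^u(\tau)$. Because $\VV_{\sS_1} = \VV_{max}(\til{\AA}_1)$ contains $\sigma^d(\AA_1)$, there are two cases: if $\VV_{\sS_1}$ is a super-simple node then $\sigma^d(\AA_1) = \VV_{\sS_1} \preceq \rho$; if $\VV_{\sS_1} = \til{\LL}_j$ then $\sigma^d(\AA_1) \in \LL_j''$, so $\sigma^d(\AA_1) \prec \rho_j$, and $\til{\LL}_j \preceq \rho$ in $\PP_\sS$ translates to $\rho_j \preceq \rho$, yielding $\sigma^d(\AA_1) \prec \rho$. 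The same dichotomy applied to $\VV_{\sS_2} = \VV_{min}(\til{\AA}_2)$ (which contains $\sigma^u(\tau) = \sigma^u(\AA_2)$) gives $\rho \preceq \sigma^u(\tau)$. Applying Lemma~\ref{L:A_induces} now shows $\AA_1$ is an appendage subnetwork of $\GG(\tau)$, and Lemma~\ref{L:induce_homeo} concludes $\AA_1 \Rightarrow \tau$.

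The main obstacle is step (ii): making the projection of a path from $\GG$ to $\PP$ rigorous, and checking that the abstract ordering $\preceq$ on $\PP_\sS$ really does translate to the structural-subnetwork ordering on $\GG$ in all four combinations (super-simple vs.\ backbone, on each side). The case analysis is short but must be carried out carefully because $\VV_{max}$ and $\VV_{min}$ can each be either a super-simple node or a backbone node, and the inequalities $\sigma^d(\AA_1) \preceq \rho$ and $\rho \preceq \sigma^u(\tau)$ require us to place $\sigma^d(\AA_1)$ and $\sigma^u(\tau)$ strictly between the flanking super-simple nodes of the relevant structural subnetwork.
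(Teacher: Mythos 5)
Your proposal is correct and follows essentially the same route as the paper: both reduce to Lemma~\ref{L:A_induces} by observing $\sigma^d(\AA_1)\in\VV_{\sS_1}$ and $\sigma^u(\tau)=\sigma^u(\AA_2)\in\VV_{\sS_2}$, so that the $\PP$-level hypothesis becomes the ordering condition $\sigma^d(\AA_1)\preceq\rho\preceq\sigma^u(\tau)$. The two steps you flag as needing care (projecting $\GG$-paths to $\PP$-paths, and translating $\preceq$ between $\PP_\sS$ and $\GG$) are simply asserted in the paper's three-line proof, so your version is, if anything, more explicit.
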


\begin{proof}
Since $\sigma^d(\AA_1)$ is the maximal (downstream) simple node with an appendage path from $\AA_1$, we have $\sigma^d(\AA_1) \in \VV_{\sS_1}$.
Let $\tau \in \AA_2$ be an appendage node of $\GG$ and $\sigma^u(\tau)$ be as in Definition~\ref{D:sigma(kappa)}. 
Then $\sigma^u(\tau) \in \VV_{\sS_2}$ and every path from $\til{\AA}_1$ to $\til{\AA}_2$ in $\PP$ passes through a super-simple node $\rho$ satisfying $\sigma^d(\AA_1)\preceq\rho\preceq \sigma^u(\kappa)$.
Therefore, it follows from Lemma \ref{L:A_induces} that 
$\AA \Rightarrow \tau$.
\end{proof}

The next five propositions identify 
nodes in structural and appendage subnetworks 
that are not induced by $\AA$. 

\begin{proposition}\label{prop:app_not_struct}
Let $\AA$ be an appendage subnetwork of $\GG$ and let $\LL_j'$ be the $j^{th}$ augmented simple subnetwork of $\GG$. Let $\til{\AA} \to \VV_\sS$ be the arrow in $\PP$ from $\til{\AA}$ to $\PP_\sS$
and suppose 
$\VV_\sS$
is downstream from or equal to $\til{\LL}_j$. If $\kappa \in \LL_j'$, then $\AA \not \Rightarrow \kappa$. 
\end{proposition}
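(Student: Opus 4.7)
The strategy is to show that $\AA$ fails to be an appendage subnetwork of $\GG(\kappa)$; by Lemma~\ref{L:induce_homeo} this yields $\AA\not\Rightarrow\kappa$. The whole argument reduces to locating $\sigma^u(\kappa)$ and $\sigma^d(\AA)$ in the ordering $\preceq$ and then invoking either Lemma~\ref{L:kappa_prec_AA} or Lemma~\ref{L:kappa_equal_AA}.

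First I would locate $\sigma^d(\AA)$ using the hypothesis. The arrow $\til\AA\to\VV_\sS$ is, by Definition~\ref{D:V_max}, the arrow to $\VV_{max}(\til\AA)$, so $\sigma^d(\AA)$ lies in $\VV_\sS$. Since $\VV_\sS$ is downstream from or equal to $\til\LL_j$ in the feedforward chain \eqref{backbone_e}, either $\VV_\sS=\til\LL_j$, in which case $\sigma^d(\AA)\in\LL_j''$, or $\VV_\sS$ is one of $\rho_j,\til\LL_{j+1},\ldots$, in which case $\rho_j\preceq\sigma^d(\AA)$.

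Next I would pin down $\sigma^u(\kappa)$ depending on whether $\kappa\in\LL_j''$ or $\kappa\in T_j$. If $\kappa$ is simple, then $\sigma^u(\kappa)=\kappa\in\LL_j''$ by Definition~\ref{D:sigma(kappa)}. If $\kappa\in T_j$, Lemma~\ref{lem:linked_app} provides a simple cycle $C$ containing $\kappa$ and a node $\sigma\in\LL_j''$ but no super-simple nodes. The segment of $C$ from $\sigma$ to $\kappa$ is an appendage path, so $\sigma^u(\kappa)\preceq\sigma$. The main technical step is to rule out $\sigma^u(\kappa)\prec\rho_{j-1}$: if that were possible, concatenating a simple path $\iota\pathto\sigma^u(\kappa)$ (which avoids $\rho_{j-1}$), the appendage path $\sigma^u(\kappa)\pathto\kappa$, the cycle segment $\kappa\pathto\sigma'$ with $\sigma'\in\LL_j''$, and a simple path $\sigma'\pathto o$ through $\rho_j,\ldots,\rho_{q+1}$, would yield a simple $\iota o$-path through $\kappa$, contradicting that $\kappa$ is appendage in $\GG$. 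Hence $\sigma^u(\kappa)$ is either $\rho_{j-1}$ or lies in $\LL_j''$.

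Finally, I would carry out the case split. If $\sigma^u(\kappa)=\rho_{j-1}$, then in either configuration for $\sigma^d(\AA)$ we have $\sigma^u(\kappa)\prec\sigma^d(\AA)$, and Lemma~\ref{L:kappa_prec_AA} gives the conclusion. If $\sigma^u(\kappa)\in\LL_j''$ and $\sigma^d(\AA)\in\LL_j''$, both lie in the common simple subnetwork $\LL_j''$ and Lemma~\ref{L:kappa_equal_AA} applies. If $\sigma^u(\kappa)\in\LL_j''$ but $\sigma^d(\AA)\succeq\rho_j$, then $\sigma^u(\kappa)\prec\rho_j\preceq\sigma^d(\AA)$ and Lemma~\ref{L:kappa_prec_AA} again applies. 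I expect the main obstacle to be the exclusion of $\sigma^u(\kappa)\prec\rho_{j-1}$ in the linked-appendage subcase, since this is the step requiring the careful construction of a simple $\iota o$-path out of the cycle $C$ together with the hypothetical upstream appendage path; every other step is a direct reading off of the partial order.
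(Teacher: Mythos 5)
Your proposal is correct and follows essentially the same route as the paper: split on whether $\VV_\sS$ equals $\til{\LL}_j$ or is strictly downstream, locate $\sigma^u(\kappa)$ and $\sigma^d(\AA)$ in the order $\preceq$, and apply Lemma~\ref{L:kappa_prec_AA} or Lemma~\ref{L:kappa_equal_AA} accordingly. The paper's proof is a two-sentence version of yours that simply asserts the placement of $\sigma^u(\kappa)$; your additional work on the case $\kappa\in T_j$ fills in detail that the paper leaves implicit.
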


\begin{proof}
Let  $\kappa$ be a node in $\LL_j'$. If $\VV_\sS$ is strictly
downstream from $\til{\LL}_j$, then $\sigma^u(\kappa) \prec \sigma^d(\AA)$ and by Lemma \ref{L:kappa_prec_AA}, $\AA \not\Rightarrow \kappa$. If $\til{\VV}=\til{\LL}_j$, then $\sigma^d(\AA)\in \LL_j'$ and $\sigma^u(\kappa) \in \LL_j'$ so by Lemma \ref{L:kappa_equal_AA} $\AA \not\Rightarrow \kappa$. 
\end{proof}

\begin{proposition}\label{prop:app_not_super_simple}
Let $\AA$ be an appendage subnetwork of $\GG$ and let $\rho_j$ be the $j^{th}$ super-simple node of  $\GG$. Let 
$\til{\AA} \to \VV_\sS$ be the arrow in $\PP$ from $\til{\AA}$ to $\PP_\sS$
and suppose $\VV_\sS$ is strictly
downstream from $\rho_j$. 
Then $\AA \not\Rightarrow \rho_j$. 
\end{proposition}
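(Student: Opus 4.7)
The plan is to reduce the proposition to Lemma~\ref{L:kappa_prec_AA} via Lemma~\ref{L:induce_homeo}. Together these say that $\AA \not\Rightarrow \rho_j$ is equivalent to $\AA$ failing to be an appendage subnetwork of $\GG(\rho_j)$, and that the latter follows as soon as we establish $\sigma^u(\rho_j) \prec \sigma^d(\AA)$. So the whole task reduces to unpacking the pattern-network hypothesis into this single order-theoretic inequality.

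First I would translate the hypothesis. The arrow $\til{\AA} \to \VV_\sS$ is the one produced by Definition~\ref{D:V_max}, so $\VV_\sS = \VV_{max}(\til{\AA})$. Now $\sigma^d(\AA)$ is, by Definition~\ref{D:sigma(kappa)}(b), a simple node of $\GG$ with an appendage path from a node of $\AA$. Hence the node of $\PP_\sS$ that contains $\sigma^d(\AA)$ lies in the collection described in Definition~\ref{D:V_max}(a), so by maximality of $\VV_{max}(\til{\AA})$ and of $\sigma^d(\AA)$ itself we have $\sigma^d(\AA) \in \VV_\sS$. The hypothesis that $\VV_\sS$ is strictly downstream of $\rho_j$ in the feedforward chain \eqref{backbone_e} then gives $\rho_j \prec \sigma^d(\AA)$.

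Second, since $\rho_j$ is super-simple in $\GG$ it is in particular simple, so Definition~\ref{D:sigma(kappa)}(a) yields $\sigma^u(\rho_j) = \rho_j$. Combining with the previous step, $\sigma^u(\rho_j) \prec \sigma^d(\AA)$, which is precisely the hypothesis of Lemma~\ref{L:kappa_prec_AA} with $\kappa = \rho_j$. That lemma then concludes that $\AA$ is not an appendage subnetwork of $\GG(\rho_j)$, and Lemma~\ref{L:induce_homeo} delivers $\AA \not\Rightarrow \rho_j$.

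There is essentially no serious obstacle; the proposition is a one-line corollary of Lemma~\ref{L:kappa_prec_AA} once the $\PP$-level hypothesis is translated into the partial order on $\GG$. The only step requiring any attention is the routine identification $\sigma^d(\AA) \in \VV_{max}(\til{\AA})$, which follows directly from matching the two maximality conditions in Definitions~\ref{D:sigma(kappa)} and \ref{D:V_max}.
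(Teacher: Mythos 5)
Your proposal is correct and follows exactly the paper's own argument: the hypothesis on $\VV_\sS = \VV_{max}(\til{\AA})$ gives $\rho_j \prec \sigma^d(\AA)$, and since $\sigma^u(\rho_j)=\rho_j$ this is the hypothesis of Lemma~\ref{L:kappa_prec_AA}, which together with Lemma~\ref{L:induce_homeo} yields $\AA \not\Rightarrow \rho_j$. You have merely spelled out the identification $\sigma^d(\AA)\in\VV_{max}(\til{\AA})$ and the reduction $\sigma^u(\rho_j)=\rho_j$ that the paper leaves implicit.
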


\begin{proof}
If $\VV_\sS$ is strictly
downstream from $\rho_j$ then $\rho_j \prec \sigma^d(\AA)$. By Lemma \ref{L:kappa_prec_AA}, $\AA \not\Rightarrow \rho_j$. 
\end{proof}

\begin{proposition} \label{prop:app_not_itself}
Let $\AA$ be an appendage subnetwork of $\GG$ and $\tau \in \AA$.
Then $\AA \not\Rightarrow \tau$. 
\end{proposition}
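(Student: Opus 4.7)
The plan is to reduce the statement to a question about $\GG(\tau)$ using Lemma \ref{L:induce_homeo}: this lemma asserts that $\AA \Rightarrow \tau$ if and only if $\AA$ is a homeostasis subnetwork of $\GG(\tau)$. Following the ``type-preserving'' convention used implicitly throughout this section---as in Lemma \ref{L:A_induces} and Propositions \ref{prop:app_to_struct}, \ref{prop:app_not_struct}, where an appendage subnetwork of $\GG$ is identified as a homeostasis subnetwork of $\GG(\kappa)$ precisely by verifying that it remains an appendage subnetwork---the task reduces to showing that $\AA$ is not an appendage subnetwork of $\GG(\tau)$.

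The crucial observation is that, as the output of $\GG(\tau)$, the node $\tau$ is super-simple in $\GG(\tau)$: it is the terminal node of every $\iota\tau$-simple path and hence lies on each of them. By Definition \ref{D:appendage_subnet}, an appendage subnetwork of $\GG(\tau)$ is a transitive component of the subnetwork induced by the super-appendage nodes of $\GG(\tau)$, and super-appendage nodes are in particular appendage (therefore not super-simple). Since $\tau \in \AA$, the set $\AA$ is not contained in the super-appendage nodes of $\GG(\tau)$, so $\AA$ fails to be an appendage subnetwork of $\GG(\tau)$.

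Applying Lemma \ref{L:induce_homeo} then yields $\AA \not\Rightarrow \tau$. The argument is essentially a one-line application of definitions once the correct framing is in place; there is no substantive obstacle, only the observation that relocating the output to $\tau$ promotes $\tau$ to super-simple status in the new network, immediately disqualifying any set containing $\tau$ from being an appendage subnetwork of $\GG(\tau)$.
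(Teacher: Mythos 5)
Your proof is correct and takes essentially the same route as the paper: the paper's own argument is exactly that $\tau$, being the output node of $\GG(\tau)$, is a simple node of $\GG(\tau)$, so $\AA$ cannot be an appendage subnetwork of $\GG(\tau)$, whence $\AA \not\Rightarrow \tau$. Your observation that $\tau$ is in fact super-simple in $\GG(\tau)$, and your explicit appeal to Lemma~\ref{L:induce_homeo} together with the type-preservation point (Remark~\ref{R:struct_G_k}), only make the same argument slightly more explicit.
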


\begin{proof}
Since the node $\tau$ is the output node of $\GG(\tau)$, thus it is a simple node of $\GG(\tau)$. We conclude that $\AA$ is not an appendage subnetwork of $\GG(\tau)$, so $\AA \not\Rightarrow \tau$. 
\end{proof}

\begin{proposition}\label{prop:app_no_path}
Let $\AA_1$ and $\AA_2$ be distinct
appendage subnetworks of $\GG$ and $\tau \in \AA_2$.
Let $\til{\AA}_1 \to \VV_{\sS_1}$ be the arrow in $\PP$ from $\til{\AA}_1$ to $\PP_\sS$
and $\VV_{\sS_2} \to \til{\AA}_2$ be the arrow in $\PP$ from $\PP_\sS$ to $\til{\AA}_2$.
If there is no path from $\til{\AA}_1$ to $\til{\AA}_2$ in the pattern network $\PP$ or $\VV_{\sS_2} \prec \VV_{\sS_1}$, then $\AA_1 \not\Rightarrow \tau$. 
\end{proposition}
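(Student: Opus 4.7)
The plan is to reduce the statement to Lemma \ref{L:kappa_prec_AA} via Lemma \ref{L:induce_homeo}. Indeed, by Lemma \ref{L:induce_homeo}, showing $\AA_1 \not\Rightarrow \tau$ is equivalent to showing that $\AA_1$ is not an appendage subnetwork of $\GG(\tau)$, and Lemma \ref{L:kappa_prec_AA} gives exactly this conclusion provided $\sigma^u(\tau) \prec \sigma^d(\AA_1)$. So the entire proof boils down to establishing this strict ordering under either of the two alternative hypotheses in the proposition.

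First I would handle the direct hypothesis $\VV_{\sS_2} \prec \VV_{\sS_1}$. By construction, $\VV_{\sS_2} = \VV_{min}(\til{\AA}_2)$ ``contains'' $\sigma^u(\tau)$ in the sense that either $\sigma^u(\tau)$ equals the super-simple node $\VV_{\sS_2}$, or it lies in the augmented simple subnetwork $\LL_j'$ corresponding to the backbone node $\VV_{\sS_2} = \til{\LL}_j$; similarly $\VV_{\sS_1} = \VV_{max}(\til{\AA}_1)$ captures $\sigma^d(\AA_1)$. Using the bracketing $\rho_{j-1} \prec \sigma \prec \rho_j$ for any $\sigma \in \LL_j'$, a short case-by-case check according to whether each of $\VV_{\sS_1}, \VV_{\sS_2}$ is super-simple or backbone shows that the strict inequality $\VV_{\sS_2} \prec \VV_{\sS_1}$ in the totally ordered chain $\PP_\sS$ propagates to $\sigma^u(\tau) \prec \sigma^d(\AA_1)$ in $\GG$.

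Next I would observe that the second (``no path'') hypothesis reduces to the first. The backbone $\PP_\sS$ is a directed chain
\[
\iota = \rho_0 \to \til{\LL}_1 \to \rho_1 \to \cdots \to \til{\LL}_{q+1} \to \rho_{q+1} = o,
\]
so it contains a directed path from $\VV_{\sS_1}$ to $\VV_{\sS_2}$ whenever $\VV_{\sS_1} \preceq \VV_{\sS_2}$. Prepending the arrow $\til{\AA}_1 \to \VV_{\sS_1}$ and appending $\VV_{\sS_2} \to \til{\AA}_2$ to any such path yields a directed path from $\til{\AA}_1$ to $\til{\AA}_2$ in $\PP$, contradicting the ``no path'' assumption. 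Hence the no-path condition forces $\VV_{\sS_2} \prec \VV_{\sS_1}$, reducing to the previous case, and we conclude via Lemma \ref{L:kappa_prec_AA}.

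There is no real obstacle here: the substantive content is all packaged in Lemma \ref{L:kappa_prec_AA}, and the only mild care required is the bookkeeping of the four sub-cases in the first step, where one must translate the ordering in $\PP_\sS$ into an ordering of the underlying simple nodes of $\GG$.
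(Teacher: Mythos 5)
Your proposal is correct and follows essentially the same route as the paper: establish $\sigma^u(\tau) \prec \sigma^d(\AA_1)$ from the hypotheses and invoke Lemma \ref{L:kappa_prec_AA}. In fact you supply details the paper leaves implicit, namely the reduction of the ``no path'' alternative to $\VV_{\sS_2}\prec\VV_{\sS_1}$ via the backbone chain, and the case check translating that ordering in $\PP_\sS$ into the ordering of the underlying simple nodes.
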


\begin{proof} 
Let $\tau \in \AA_2$ be an appendage node of $\GG$ and $\sigma^u(\tau)$ be as in Definition~\ref{D:sigma(kappa)}.
We consider $\sigma^d(\AA_1)$, which is the maximal (downstream) simple node with an appendage path from $\AA_1$.
Since there is no path from $\til{\AA}_1$ to $\til{\AA}_2$ in $\PP$ or 
$\VV_{\sS_2} \prec \VV_{\sS_1}$, we have $\sigma^u(\tau_2) \prec \sigma^d(\AA_1)$.
By Lemma \ref{L:kappa_prec_AA}, $\AA \not\Rightarrow \tau_2$. 
\end{proof}

\begin{proposition} \label{prop:app_bad_path}
Let $\AA_1$ and $\AA_2$ be distinct appendage subnetworks of $\GG$ and $\tau \in \AA_2$. 
If there is a path from $\til{\AA}_1$ to $\til{\AA}_2$ in $\PP$ that does not contain a super-simple node, then $\AA_1 \not\Rightarrow \tau$. 
\end{proposition}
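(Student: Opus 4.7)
By Lemma~\ref{L:induce_homeo}, it suffices to show that $\AA_1$ is not a homeostasis subnetwork of $\GG(\tau)$.  Because $\AA_1$ is a strongly connected component of the super-appendage subgraph of $\GG$, it remains strongly connected in the underlying graph of $\GG(\tau)$ and therefore cannot be a structural subnetwork of $\GG(\tau)$ (cf.\ Remark~\ref{R:struct_G_k}).  Thus the task reduces to showing that $\AA_1$ is not an appendage subnetwork of $\GG(\tau)$; I will do so by producing a node $v \in \AA_1$ that is simple in $\GG(\tau)$, i.e., by exhibiting an $\iota\tau$-simple path in $\GG$ passing through $v$.

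The simple path is built in three stages: a simple path $\iota \pathto \sigma^u(\AA_1)$ in $\GG$, which exists because $\sigma^u(\AA_1)$ is a simple node; an appendage path $\sigma^u(\AA_1) \pathto v$ into $\AA_1$; and a walk $v \pathto \tau$ that realizes the hypothesized super-simple-free $\PP$-path $\til\AA_1 = \til\AA^{(0)} \to \cdots \to \til\AA^{(m)} = \til\AA_2$.  To build the third stage, I will translate each step of the pattern path into $\GG$: a direct arrow $\til\AA^{(i)} \to \til\AA^{(i+1)}$ in $\PP_\AA$ corresponds to an arrow $v_i \to u_{i+1}$ in $\GG$ with $v_i \in \AA^{(i)}$ and $u_{i+1} \in \AA^{(i+1)}$; a backbone hop $\til\AA^{(i)} \to \til\LL_j \to \til\AA^{(i+1)}$ corresponds, by Definitions~\ref{D:V_max} and~\ref{D:V_min}, to appendage paths $\AA^{(i)} \pathto \sigma \in \LL_j''$ and $\sigma' \in \LL_j'' \pathto \AA^{(i+1)}$ joined by a bridge inside $\LL_j$ that avoids the super-simple endpoints $\rho_{j-1}, \rho_j$.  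Chaining these fragments with internal paths in each $\AA^{(i)}$ and finishing with an $\AA_2$-internal path to $\tau$ yields a walk $v \pathto \tau$ in $\GG$ that never visits a super-simple node of $\GG$.

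The principal technical obstacle will be to verify that the concatenated walk $\iota \pathto v \pathto \tau$ has no repeated nodes, so that it is a genuine simple path in $\GG$.  Two overlap risks arise.  (a) At each backbone hop, the simple nodes witnessing $\VV_{max}(\til\AA^{(i)})$ and $\VV_{min}(\til\AA^{(i+1)})$ may differ, requiring a bridge through $\LL_j \setminus \{\rho_{j-1}, \rho_j\}$; I expect this to follow by choosing the witnessing simple node uniformly when possible and otherwise routing through the linked-appendage set $T_j$ via Lemma~\ref{lem:linked_app}.  (b) Simple nodes of the prefix $\iota \pathto \sigma^u(\AA_1)$ might coincide with simple nodes in some $\LL_{j_l}''$ visited later, but iterated application of Lemma~\ref{L:app_connect_order} to each intermediate $\AA^{(i)}$ forces every such $\sigma$ to satisfy $\sigma^u(\AA_1) \preceq \sigma$, ruling out collision with the strictly upstream simple nodes of the prefix.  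Once the walk is confirmed simple, $v \in \AA_1$ is a simple node of $\GG(\tau)$; $\AA_1$ is therefore not an appendage subnetwork of $\GG(\tau)$, and Lemma~\ref{L:induce_homeo} yields $\AA_1 \not\Rightarrow \tau$.
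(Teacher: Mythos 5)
Your overall reduction is too narrow, and this is where the argument breaks: to show $\AA_1$ is not an appendage subnetwork of $\GG(\tau)$ it is \emph{not} necessary (and in general not possible) to make a node of $\AA_1$ simple in $\GG(\tau)$. Being an appendage subnetwork of $\GG(\tau)$ also requires that the nodes of $\AA_1$ be \emph{super}-appendage in $\GG(\tau)$ and that $\AA_1$ be a full transitive component of such nodes; the failure can occur at either of those stages (compare the three cases of Lemma~\ref{L:avoiding_cycle}) while every node of $\AA_1$ remains appendage in $\GG(\tau)$. This matters precisely in the case your construction cannot handle, namely when the super-simple-free path in $\PP$ passes through a backbone node $\til{\LL}_j$. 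There, the arrows $\til{\AA}^{(i)}\to\til{\LL}_j\to\til{\AA}^{(i+1)}$ only guarantee appendage paths $\AA^{(i)}\pathto\sigma$ and $\sigma'\pathto\AA^{(i+1)}$ for possibly \emph{distinct, incomparable} nodes $\sigma,\sigma'\in\LL_j''$; your ``bridge inside $\LL_j$ avoiding $\rho_{j-1},\rho_j$'' need not exist, and indeed there may be no directed path in $\GG$ from $\AA_1$ to $\tau$ at all (note Lemma~\ref{L:app_connect_order} forces $\sigma^d(\AA_1)\preceq\sigma^u(\AA_1)$, so the exit of $\AA_1$ sits upstream of its entry, which is exactly what obstructs threading a simple path through it). In that situation no node of $\AA_1$ lies on any $\iota\tau$-simple path, so your target $v$ does not exist, yet the proposition is still true: one shows instead that $\sigma^u(\tau)\preceq\sigma^d(\AA_1)$ (because $\PP_\sS$ is a feedforward chain, a strict inequality the other way would force a super-simple node onto the pattern path), and then the cycle $\AA_1\pathto\sigma^d(\AA_1)\pathto\sigma^u(\AA_1)\pathto\AA_1$ avoids super-simple nodes of $\GG(\tau)$, so Lemma~\ref{L:kappa_prec_AA} or Lemma~\ref{L:kappa_equal_AA} (via Lemma~\ref{L:avoiding_cycle}) kills appendage-subnetwork status without producing a simple node of $\GG(\tau)$ inside $\AA_1$.

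Your treatment of the backbone-free case is essentially sound and matches the paper: the pattern path then lifts to a genuine appendage path $\AA_1\pathto\tau$ in $\GG$, it meets the appendage path $\sigma^u(\AA_1)\pathto\AA_1$ in a node of the transitive component $\AA_1$, and concatenating with a simple path $\iota\pathto\sigma^u(\AA_1)$ gives the desired $\iota\tau$-simple path through $\AA_1$. But you need the case split: the lifting step is exactly what fails once a backbone node intervenes, and the two overlap issues you flag as ``technical obstacles'' in that case are symptoms of this structural failure rather than problems that uniform witness choices or routing through $T_j$ can repair.
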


\begin{proof}
There are two ways for a path $p \in \PP$ from $\til{\AA}_1$ to $\til{\AA}_2$ to avoid a super-simple node, where $p$ contains backbone nodes or has no backbone node.
We consider each case separately.

\paragraph{The path $p$ contains a backbone node $\til{\LL}_j$.} 
Then $p$ is the concatenation of two appendage paths: $\til{\AA}_1 \pathto \til{\LL}_j$ 
and $\til{\LL}_j \pathto \til{\AA}_2$. Let $\tau \in \AA_2$ be appendage node.
Suppose $\sigma^d(\AA_1) \prec \sigma^u(\tau)$, there exists a super-simple node $\rho$ of $\GG$ such that $\sigma^d(\AA_1) \preceq \rho \preceq \sigma^u(\tau)$. Hence the path $p$ from $\til{\AA}_1$ to $\til{\AA}_2$ must contain a super-simple node $\rho$, which contradicts with the assumption.
Thus we derive $\sigma^u(\tau) \preceq \sigma^d(\AA_1)$.
Therefore, either $\sigma^u(\tau) \prec \sigma^d(\AA_1)$ and Lemma \ref{L:kappa_prec_AA} implies $\AA_1 \not\Rightarrow \tau_2$, or $\sigma^u(\tau)$ and $\sigma^d(\AA_1)$ belong to $\LL_j''$ and by Lemma \ref{L:kappa_equal_AA} $\AA_1 \not\Rightarrow \tau_2$.

\paragraph{The path $p$ contains no backbone node.} 
Then there is an appendage path $\AA_1 \pathto \tau_2$. Since $\tau_2$ is an appendage node, any simple path $p_1:=\iota \pathto \sigma^u(\AA_1)$ avoids $\tau_2$. There is an appendage path $p':=\sigma^u(\AA_1) \pathto \AA_1$. Since both $p$ and $p'$ contain a node of $\AA_1$ and the nodes of $\AA_1$ are path-connected, we may assume $p$ and $p'$ share a node $\tau$. \textit{A priori}, $\tau$ need not be a node of $\AA_1$, but, since $p$ consists only of appendage nodes, $\tau$ is an appendage node. Furthermore, the sub-path $\tau\pathto \AA_1$ on $p'$ and the sub-path $\AA_1 \pathto \tau$ on $p$ implies $\tau$ belongs to the transitive component of $\AA_1$ and thus $\tau \in \AA_1$.  Then there is a simple path $\iota \pathto \sigma^u(\AA_1) \pathto \tau \pathto \tau_2$ so that $\tau$ is a simple node of $\GG(\tau_2)$. Since $\tau \in \AA_1$ is not appendage in $\GG(\tau_2)$, we conclude that $\AA_1$ is not an appendage subnetwork of $\GG(\tau_2)$ and so $\AA_1 \not\Rightarrow \tau_2$. 
\end{proof}

\begin{proof}[Proof of Theorems \ref{thm:app_to_struct} and \ref{thm:app_to_app}]
Recall the relation of homeostatic induction between $\GG$ and $\PP$ in Definition \ref{D:homeo_inducing_pattern}, we conclude Theorem \ref{thm:app_to_struct} from Propositions \ref{prop:app_to_struct}, \ref{prop:app_not_struct}, and \ref{prop:app_not_super_simple}. Similarly, Theorem \ref{thm:app_to_app} follows from Propositions \ref{prop:app_to_app}, \ref{prop:app_not_itself}, \ref{prop:app_no_path}, and \ref{prop:app_bad_path}.
\end{proof}

\section{Properties of the Induction Relation ($\Rightarrow$)}
\label{S:properties_induction}

In this section, we give three general results about the induction relation.
First, in Theorem~\ref{thm:pattern_from_subnetworks_intro} we prove that the induction relation is characterized by its behavior on homeostasis subnetworks.
Next, in Theorems~\ref{thm:always_related_intro} we prove that induction applies in at least one direction for distinct homeostasis subnetworks and that no subnetwork induces itself.
Finally, in  Theorem~\ref{thm:bijective_corresp_intro} we prove that distinct subnetworks have distinct homeostasis patterns. That is, the set of nodes induced by a homeostasis subnetwork is unique among all homeostasis subnetworks.

\begin{theorem} \label{thm:pattern_from_subnetworks_intro}
Suppose $\KK_1$ and $\KK_2$ are distinct homeostasis subnetworks of $\GG$.  Let $\kappa$ be a node of 
$\KK_2$ where $\kappa\neq o$.  If $\KK_1\Rightarrow \kappa$, then $\KK_1 \Rightarrow \KK_2$.
\end{theorem}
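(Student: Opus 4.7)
My plan is to prove the theorem by a case analysis on the types of $\KK_1$ and $\KK_2$, giving four cases: (i) both structural; (ii) $\KK_1$ structural and $\KK_2=\AA$ appendage; (iii) $\KK_1=\AA_1$ appendage and $\KK_2=\LL_k$ structural; (iv) both appendage. In each case, I will use the characterization Theorems~\ref{thm:struct_to_struct}--\ref{thm:app_to_app} to translate the hypothesis $\KK_1\Rightarrow\kappa$ into a combinatorial condition in the pattern network $\PP$ on the position of $\KK_1$ relative to the pattern node containing $\kappa$, and then invoke those same theorems in the other direction to deduce $\KK_1\Rightarrow\kappa'$ for every other $\kappa'\in\KK_2$.

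Cases (ii) and (iv) are the easier ones, because an appendage subnetwork $\KK_2=\AA$ corresponds to the single pattern node $\til{\AA}\in\PP_\AA$ and every $\kappa\in\AA$ belongs to $\til{\AA}$. The criteria supplied by Theorem~\ref{thm:struct_to_app} (involving $\VV_{min}(\til{\AA})$ and $\til{\LL}_j$) and by Theorem~\ref{thm:app_to_app} (involving the paths in $\PP$ between $\til{\AA}_1$ and $\til{\AA}_2$ and their intermediate super-simple nodes) depend only on the pattern nodes and not on the particular $\kappa$, so a single induced node propagates to the whole of $\AA$.

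For case (i), I split into three subcases according to whether $\kappa$ is an interior node of $\LL_k'$ or one of the super-simple endpoints $\rho_{k-1},\rho_k$ of $\LL_k$. In each subcase, Theorem~\ref{thm:struct_to_struct} forces the pattern node of $\kappa$ (one of $\rho_{k-1},\til{\LL}_k,\rho_k$) to be strictly downstream of $\til{\LL}_j$ in the feedforward order of $\PP_\sS$; combined with $j\neq k$, this forces $k>j$, so all three pattern nodes of $\LL_k$ are strictly downstream of $\til{\LL}_j$, and a second application of Theorem~\ref{thm:struct_to_struct} yields $\LL_j\Rightarrow\LL_k$. The subcases $\kappa\in\LL_k'$ and $\kappa=\rho_{k-1}$ of case (iii) are analogous, with Theorem~\ref{thm:app_to_struct} producing the condition $\VV_{max}(\til{\AA}_1)\prec\til{\LL}_k$ or $\VV_{max}(\til{\AA}_1)\preceq\rho_{k-1}$, both implying $\VV_{max}\preceq\rho_{k-1}$, so that every pattern node of $\LL_k$ is (strictly) downstream of $\VV_{max}$ and $\AA_1\Rightarrow\LL_k$ follows.

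The hard part will be the remaining subcase of case (iii): $\kappa=\rho_k$ with $\rho_k\neq o$. Theorem~\ref{thm:app_to_struct} here only gives $\VV_{max}(\til{\AA}_1)\preceq\rho_k$, which allows $\VV_{max}\in\{\til{\LL}_k,\rho_k\}$, and for such $\VV_{max}$ the appendage $\AA_1$ fails to induce $\rho_{k-1}$ or any node of $\LL_k'$. I plan to handle this by using the fact that the super-simple node $\rho_k$ is also a node of $\LL_{k+1}$: since $\VV_{max}\preceq\rho_k\prec\til{\LL}_{k+1}$, Theorem~\ref{thm:app_to_struct} yields $\AA_1\Rightarrow\til{\LL}_{k+1}$ and $\AA_1\Rightarrow\rho_{k+1}$, so $\AA_1\Rightarrow\LL_{k+1}$ in full, and $\LL_{k+1}$ is the structural subnetwork containing $\kappa$ for which the conclusion $\KK_1\Rightarrow\KK_2$ holds.
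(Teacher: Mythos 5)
Your proposal is correct and follows essentially the same route as the paper: a four-way case analysis that reduces the statement to the characterization results (you invoke Theorems~\ref{thm:struct_to_struct}--\ref{thm:app_to_app}; the paper invokes the underlying Propositions~\ref{prop:struct_not_induce_struct}--\ref{prop:app_bad_path}, which is the same content). The one place you go beyond the paper is the subcase you correctly flag as hard: $\KK_1=\AA_1$ appendage, $\kappa=\rho_k$ super-simple with $\VV_{max}(\til{\AA}_1)\in\{\til{\LL}_k,\rho_k\}$. There $\AA_1\Rightarrow\rho_k$ but $\AA_1\not\Rightarrow\rho_{k-1}$ and $\AA_1\not\Rightarrow\til{\LL}_k$, so the conclusion genuinely fails for the choice $\KK_2=\LL_k$ and holds only for $\KK_2=\LL_{k+1}$; your resolution---reading the theorem as asserting induction of \emph{some} homeostasis subnetwork containing $\kappa$---is the only reading under which the statement is true. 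The paper's proof acknowledges the two-$\KK_2$ ambiguity for super-simple $\kappa$ only in the structural--structural case (where distinctness of $\KK_1$ and $\KK_2$ happens to resolve it) and is silent about it when $\KK_1$ is appendage, so on this point your treatment is the more careful of the two.
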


\begin{proof}
There are four possibilities for $\KK_1$ to induce $\kappa$.  Each of these is determined by 
the classification of $\KK_1$ as appendage or structural subnetwork and $\kappa$ as appendage 
or simple node.  The four possibilities are discussed next.

\paragraph{$\KK_1$ structural and $\kappa$ simple.} 
Then $\kappa \in \KK_2 = \LL$ and the proof follows from Propositions \ref{prop:struct_induce_struct} and \ref{prop:struct_not_induce_struct}. 
We remark that when $\kappa$ is a super-simple node, there are two structural subnetworks (two $\KK_2$'s) containing $\kappa$. Further, if $\kappa \in \KK_1$ then $\KK_1$ only induces the structural subnetworks which is distinct from $\KK_1$.
 
\paragraph{$\KK_1$ structural and $\kappa$ appendage.} 
Then $\kappa \in \KK_2 = \AA$ and the proof follows from Propositions \ref{prop:struct_induce_app} and \ref{prop:struct_not_induce_app}. 

\paragraph{$\KK_1$ appendage and $\kappa$ simple.}
Then $\kappa \in \KK_2 = \LL$ and the proof follows from Propositions \ref{prop:app_to_struct}, \ref{prop:app_not_struct}, and \ref{prop:app_not_super_simple}.

\paragraph{$\KK_1$ appendage and $\kappa$ appendage.} 
Then $\kappa \in \KK_2 = \AA$ and the proof follows from Propositions \ref{prop:app_to_app}, \ref{prop:app_not_itself}, \ref{prop:app_no_path}, and \ref{prop:app_bad_path}. 
\end{proof}

\begin{theorem} \label{thm:always_related_intro}
Let $\KK_1$ be a homeostasis subnetwork of $\GG$. Then generically $\KK_1 \not\Rightarrow \KK_1$.
Moreover, let $\KK_2$ be some other homeostasis subnetwork of $\GG$.
Then one of the following relations holds:
\begin{enumerate}[(a)]
\item $\KK_1 \Rightarrow \KK_2$ and $\KK_2 \not\Rightarrow \KK_1$,

\item $\KK_2 \Rightarrow \KK_1$ and $\KK_1 \not\Rightarrow \KK_2$,

\item $\KK_1 \Rightarrow \KK_2$ and $\KK_2 \Rightarrow \KK_1$.
\end{enumerate}
\end{theorem}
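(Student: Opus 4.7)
The plan is to handle the two claims separately, using the characterization theorems from Section \ref{SS:CHP} together with one key auxiliary observation.

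For the self-induction claim $\KK_1 \not\Rightarrow \KK_1$, I split by type. If $\KK_1 = \LL_j$, I apply Proposition \ref{prop:struct_not_induce_struct} with $\til{\VV} = \til{\LL}_j$: this gives $\LL_j \not\Rightarrow \rho_{j-1}$, but $\rho_{j-1} \in \LL_j$, so $\LL_j$ fails to induce all of its own nodes. If $\KK_1 = \AA$, Proposition \ref{prop:app_not_itself} directly yields $\AA \not\Rightarrow \tau$ for every $\tau \in \AA$.

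For the always-related claim, I would argue by contradiction, assuming both $\KK_1 \not\Rightarrow \KK_2$ and $\KK_2 \not\Rightarrow \KK_1$, and split into cases. If both are structural, say $\LL_i$ and $\LL_j$ with WLOG $i < j$, Theorem \ref{thm:struct_to_struct} gives $\LL_i \Rightarrow \LL_j$ since $\til{\LL}_i$ is strictly upstream of $\til{\LL}_j$ in $\PP_\sS$. If the pair is $\LL_j$ and $\AA$, then Theorems \ref{thm:struct_to_app} and \ref{thm:app_to_struct} give $\VV_{min}(\til{\AA}) \preceq \til{\LL}_j$ and $\til{\LL}_j \preceq \VV_{max}(\til{\AA})$, which combined with $\VV_{max}(\til{\AA}) \preceq \VV_{min}(\til{\AA})$ (a direct consequence of $\sigma^d(\AA) \preceq \sigma^u(\AA)$ from Lemma \ref{L:app_connect_order}) forces the equality $\VV_{max}(\til{\AA}) = \VV_{min}(\til{\AA}) = \til{\LL}_j$.

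The main obstacle is ruling out this equality, and this is the pivotal lemma for the whole argument. The plan is: the equality would make $\sigma^u(\AA), \sigma^d(\AA) \in \LL_j''$ both non-super-simple; since $\sigma^d(\AA) \preceq \sigma^u(\AA)$, there is a simple path $\sigma^d(\AA) \pathto \sigma^u(\AA)$ entirely inside $\LL_j''$, which, when concatenated with appendage paths $\sigma^u(\AA) \pathto \tau \pathto \sigma^d(\AA)$ for any fixed $\tau \in \AA$, yields a simple cycle through $\tau$ containing only appendage nodes and non-super-simple simple nodes. Lemma \ref{lem:linked_app} then makes $\tau$ a linked appendage, contradicting that $\tau$ is super-appendage.

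For the both-appendage case, I use Theorem \ref{thm:app_to_app}. If neither $\til{\AA}_1 \pathto \til{\AA}_2$ nor $\til{\AA}_2 \pathto \til{\AA}_1$ in $\PP$, then the $\PP_\sS$-path $\til{\AA}_i \to \VV_{max}(\til{\AA}_i) \pathto \VV_{min}(\til{\AA}_{3-i}) \to \til{\AA}_{3-i}$ cannot exist in either direction, so by total-orderedness of $\PP_\sS$ both $\VV_{min}(\til{\AA}_2) \prec \VV_{max}(\til{\AA}_1)$ and $\VV_{min}(\til{\AA}_1) \prec \VV_{max}(\til{\AA}_2)$; chaining with $\VV_{max}(\til{\AA}_i) \preceq \VV_{min}(\til{\AA}_i)$ yields $\VV_{min}(\til{\AA}_2) \prec \VV_{min}(\til{\AA}_2)$, a contradiction. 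Otherwise, WLOG $\til{\AA}_1 \pathto \til{\AA}_2$, and $\AA_1 \not\Rightarrow \AA_2$ forces a bad $\PP$-path—which, since super-simples visited on any $\til{\AA}_1 \pathto \til{\AA}_2$ path automatically lie in $[\VV_{max}(\til{\AA}_1), \VV_{min}(\til{\AA}_2)]$, must contain no super-simple at all. Such a bad path alternates $\PP_\AA$-edges with backbone jumps $\til{\AA} \to \til{\LL}_k \to \til{\AA}'$ (requiring $\VV_{max}(\til{\AA}) = \til{\LL}_k = \VV_{min}(\til{\AA}')$), and each $\PP_\AA$-edge $\til{\AA} \to \til{\AA}'$ enforces $\VV_{min}(\til{\AA}') \preceq \VV_{min}(\til{\AA})$ and $\VV_{max}(\til{\AA}') \preceq \VV_{max}(\til{\AA})$ by composing appendage paths. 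Chaining these relations along any bad path gives $\VV_{max}(\til{\AA}_2) \preceq \VV_{max}(\til{\AA}_1)$ and $\VV_{min}(\til{\AA}_2) \preceq \VV_{min}(\til{\AA}_1)$. A symmetric bad path for $\AA_2 \not\Rightarrow \AA_1$, combined with these inequalities and the impossibility lemma above, forces $\VV_{max}(\til{\AA}_i) = \VV_{min}(\til{\AA}_i) = $ some common backbone $\til{\LL}_k$ for $i = 1$ or $2$, producing the final contradiction.
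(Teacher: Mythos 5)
Your overall strategy is the same as the paper's: both halves reduce to the four characterization theorems together with Propositions \ref{prop:struct_not_induce_struct} and \ref{prop:app_not_itself}, and your treatment of self-induction, of two structural subnetworks, and of the structural--appendage pair matches the paper's proof (the paper argues directly where you argue by contradiction, but that is immaterial). One repairable slip: your ``pivotal lemma'' --- that $\VV_{max}(\til{\AA})=\VV_{min}(\til{\AA})$ cannot be a backbone node --- is exactly the ``Moreover'' clause of Lemma \ref{L:app_connect_order}, which already says that either $\sigma^d(\AA)=\sigma^u(\AA)$ is super-simple or $\sigma^d(\AA)\prec\sigma^u(\AA)$, so the two nodes cannot lie in a common simple subnetwork. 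You extract only the weaker relation $\sigma^d(\AA)\preceq\sigma^u(\AA)$ and re-derive the dichotomy, but your re-derivation assumes that two $\preceq$-comparable nodes of the same $\LL_j''$ are joined by a simple path inside $\LL_j''$; they need not be, since $\preceq$ holds between any two nodes of the same simple subnetwork by definition. The paper's proof of that lemma needs three sub-cases precisely to cover the configurations where no such path exists. Cite the full lemma instead.

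The substantive gap is in the appendage--appendage case. By Theorem \ref{thm:app_to_app}, the failure $\AA_1\not\Rightarrow\AA_2$ (when a path exists) yields a path from $\til{\AA}_1$ to $\til{\AA}_2$ containing no super-simple node \emph{in the interval} $[\VV_{max}(\til{\AA}_1),\VV_{min}(\til{\AA}_2)]$; such a path may still contain super-simple nodes outside that interval. You dismiss this possibility by asserting that every super-simple node on every $\til{\AA}_1\pathto\til{\AA}_2$ path automatically lies in the interval, but your own monotonicity relations run the wrong way for that: if the path first descends through an intermediate component $\til{\BB}$ and enters $\PP_\sS$ at $\VV_{max}(\til{\BB})$, you only know $\VV_{max}(\til{\BB})\preceq\VV_{max}(\til{\AA}_1)$, so the super-simple node first encountered can be strictly upstream of $\VV_{max}(\til{\AA}_1)$ (and symmetrically at the exit through some $\VV_{min}(\til{\BB}')$ downstream of $\VV_{min}(\til{\AA}_2)$). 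Your chaining argument requires every excursion into $\PP_\sS$ to be a backbone jump, so it only covers bad paths that avoid super-simple nodes entirely, and the uncovered case is exactly where the contradiction still has to be manufactured. (The paper sidesteps this by arguing directly: it fixes the direction in which an appendage path exists, concludes non-induction that way, and shows every path in the \emph{other} direction is funneled through $\VV^1_{max}\pathto\VV^2_{min}$.) The rest of your analysis of this case --- the impossibility of ``no path in either direction'' and the squeeze onto a common backbone node when both bad paths avoid super-simple nodes --- is sound and would close the argument once this remaining case is handled.
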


\begin{proof}

First we show that homeostasis subnetwork does not induce itself. Let $\KK_1$ be a homeostasis subnetwork.
If $\KK_1$ is a structural subnetwork, then Proposition \ref{prop:struct_not_induce_struct} implies $\KK_1 \not\Rightarrow \KK_1$.  If $\KK_1$ is an appendage subnetwork, then Proposition \ref{prop:app_not_itself} implies $\KK_1 \not\Rightarrow \KK_1$. 

\medskip

Next suppose $\KK_2$ is a homeostasis subnetwork of $\GG$ where $\KK_2 \neq \KK_1$.
We split the proof into three cases based on whether $\KK_1$ and $\KK_2$ are structural or appendage subnetworks. 

\paragraph{Both $\KK_1$ and $\KK_2$ are structural subnetworks.}  Without loss of generality we assume $\KK_1$ is strictly upstream from $\KK_2$. Theorem \ref{thm:struct_to_struct} implies that $\KK_1 \Rightarrow \KK_2$ and $\KK_2 \not\Rightarrow \KK_1$. 

\paragraph{Both $\KK_1$ and $\KK_2$ are appendage subnetworks.} 
Let $\VV^1_{max}, \ \VV^1_{min}$ be nodes in $\PP_\sS$ with arrows from and to $\KK_1$ in $\PP$, and let $\VV^2_{max}, \ \VV^2_{min}$ be nodes in $\PP_\sS$ with arrows from and to $\KK_2$ in $\PP$. These connections with $\KK_1, \KK_2$ in $\PP$ are
\[
\VV^1_{min} \to \til{\KK}_1 \quad \til{\KK}_1 \to \VV^1_{max} \qquad \VV^2_{min} \to \til{\KK}_2 \quad \til{\KK}_2 \to \VV^2_{max}
\]

First assume without loss of generality that there is an appendage path from $\KK_2$ to $\KK_1$. Theorem \ref{thm:app_to_app} implies $\KK_2 \not \Rightarrow \KK_1$.
Since $\KK_1$ and $\KK_2$ are transitive components of appendage nodes, so the existence of an appendage path $\KK_2 \pathto \KK_1$ precludes the existence of an appendage path $\KK_1 \pathto \KK_2$, and thus 
$\VV^1_{min} \preceq \VV^2_{min}$.
Then Lemma \ref{L:app_connect_order} shows either $\VV^1_{max} \prec \VV^1_{min}$ or $\VV^1_{max} = \VV^1_{min}$ is a super-simple node.  
Therefore every path from $\til{\KK}_1$ to $\til{\KK}_2$ follows
$\til{\KK}_1 \pathto \VV^1_{max} \pathto \VV^2_{min} \pathto \til{\KK}_2$, and it passes through a super-simple node $\rho$ with $\VV^1_{max} \preceq \rho \preceq \VV^2_{min}$. 
Theorem \ref{thm:app_to_app} shows $\KK_1 \Rightarrow \KK_2$.

\medskip

Second assume there is no appendage path between $\KK_1$ and $\KK_2$. With loss of generality, we further assume $\VV^1_{min} \preceq \VV^2_{min}$.
Similarly, Lemma \ref{L:app_connect_order} shows $\VV^1_{min} \prec \VV^1_{min}$ or $\VV^1_{min} = \VV^1_{min}$ is a super-simple node.  
Then every path from $\til{\KK}_1$ to $\til{\KK}_2$ passes through a super-simple node $\rho$ with $\VV^1_{max} \preceq \rho \preceq \VV^2_{min}$. 
Theorem \ref{thm:app_to_app} shows $\KK_1 \Rightarrow \KK_2$.
Moreover, if $\VV^1_{min} = \VV^2_{min} = \VV^2_{max}$ is a super-simple node, we also have $\KK_2 \Rightarrow \KK_1$.
    
\paragraph{$\KK_1$ is appendage subnetwork and $\KK_2$ is structural subnetwork.} Let $\VV^1_{max}, \ \VV^1_{min}$ be nodes in $\PP_\sS$ with arrows from and to $\KK_1$ in $\PP$. These connections with $\KK_1$ in $\PP$ are:
\[
\VV^1_{min} \to \AA \qquad \AA \to \VV^1_{max}
\]
Lemma \ref{L:app_connect_order} shows that either $\VV^1_{max} \prec \VV^1_{min}$ or $\VV^1_{max} = \VV^1_{min}$ is a super-simple node.  
Therefore either $\til{\KK}_2$ is upstream of $\VV^1_{min}$ or $\VV^1_{max}$ is upstream from $\til{\LL}_j$ in $\PP_{\sS}$. 
By Theorems \ref{thm:struct_to_app} and \ref{thm:app_to_struct}, either $\AA \Rightarrow \LL_j$ or $\LL_j \Rightarrow \AA$. 
We remark that a similar argument can be achieved when $\KK_2$ is appendage subnetwork and $\KK_1$ is structural subnetwork because of symmetry.
\end{proof}

Theorems~\ref{thm:pattern_from_subnetworks_intro} and \ref{thm:always_related_intro} imply
that each homeostasis subnetwork has a unique homeostasis pattern associated to it.  Specifically:

\begin{theorem} \label{thm:bijective_corresp_intro}
Let $\KK_1$ and $\KK_2$ be two distinct homeostasis subnetworks of $\GG$. 
Then the set of subnetworks induced by $\KK_1$ and the set of subnetworks induced by $\KK_2$ are distinct.
\end{theorem}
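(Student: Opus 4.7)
The plan is to deduce the theorem as a short consequence of Theorems~\ref{thm:pattern_from_subnetworks_intro} and~\ref{thm:always_related_intro}. Write $\mathcal{I}(\KK) = \{\KK' : \KK \Rightarrow \KK'\}$ for the collection of homeostasis subnetworks of $\GG$ that $\KK$ induces. The goal is to show $\mathcal{I}(\KK_1) \neq \mathcal{I}(\KK_2)$ whenever $\KK_1$ and $\KK_2$ are distinct homeostasis subnetworks.

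I would argue by contradiction, assuming $\mathcal{I}(\KK_1) = \mathcal{I}(\KK_2)$, and then invoke the trichotomy established in Theorem~\ref{thm:always_related_intro}. In case~(a), $\KK_1 \Rightarrow \KK_2$ yields $\KK_2 \in \mathcal{I}(\KK_1) = \mathcal{I}(\KK_2)$; this contradicts the generic fact $\KK_2 \not\Rightarrow \KK_2$ proved in the first part of Theorem~\ref{thm:always_related_intro}. Case~(b) is symmetric, producing $\KK_1 \in \mathcal{I}(\KK_1)$ and thus the same contradiction. In case~(c), the hypothesis $\KK_2 \Rightarrow \KK_1$ again forces $\KK_1 \in \mathcal{I}(\KK_2) = \mathcal{I}(\KK_1)$, contradicting $\KK_1 \not\Rightarrow \KK_1$. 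In every case the distinguishing subnetwork is exhibited explicitly (either $\KK_1$ or $\KK_2$ itself), so the conclusion $\mathcal{I}(\KK_1) \neq \mathcal{I}(\KK_2)$ holds.

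The substantive content is entirely packaged into the two preceding theorems, so there is no real combinatorial obstacle at this step. The only subtlety worth flagging is the symmetric case~(c): one might initially worry that mutual induction prevents separating the two sets, but the self-non-induction property $\KK_i \not\Rightarrow \KK_i$ rescues the argument and is precisely what makes the separation possible. If instead one preferred a node-level reading — distinguishing the two homeostasis patterns of $\GG$ viewed as sets of nodes rather than sets of subnetworks — one would additionally use Theorem~\ref{thm:pattern_from_subnetworks_intro} to promote any distinguishing node $\kappa \in \KK_i$ (with $\kappa \neq o$) to the distinguishing subnetwork $\KK_i$, reducing to the subnetwork-level statement above.
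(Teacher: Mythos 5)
Your proposal is correct and uses exactly the same ingredients as the paper's proof, namely the trichotomy and the self-non-induction property from Theorem~\ref{thm:always_related_intro}; the only difference is that you phrase it as a contradiction while the paper directly exhibits $\KK = \KK_2$ (after assuming WLOG that $\KK_1 \Rightarrow \KK_2$) as a subnetwork induced by $\KK_1$ but not by $\KK_2$. This is a cosmetic difference, not a different route.
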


\begin{proof}
Consider any two distinct homeostasis subnetworks 
$\KK_1$ and $\KK_2$ of $\GG$, it is sufficient to find a homeostasis subnetwork $\KK \subset \GG$ such that 
$\KK_1 \Rightarrow \KK$ and $\KK_2 \not\Rightarrow \KK$. By Theorem \ref{thm:always_related_intro}, we have at least one of $\KK_1 \Rightarrow \KK_2$ and $\KK_2 \Rightarrow \KK_1$ holds.
We may assume without loss of generality that $\KK_1 \Rightarrow \KK_2$. 
Note that Theorem \ref{thm:always_related_intro} also shows that $\KK_2 \not \Rightarrow \KK_2$.
Therefore we set $\KK =\KK_2$ and thus $\KK_1 \Rightarrow \KK$ and $\KK_2 \not\Rightarrow \KK$.
\end{proof}

\section{Discussion}
\label{S:discussion}

Wang~\etal~\cite{WHAG21} show that an input-output network $\GG$ with an input parameter $\II$ can, 
under certain circumstances, lead to several different infinitesimal homeostasis types.  
Sections~\ref{sec:comb_charact}-\ref{S:properties_induction} show that each homeostasis type corresponds to a unique infinitesimal homeostasis pattern;  that is, a subset of nodes in $\GG$ vary homeostatically as $\II$ varies.

There are, are least, two relevant ways to modify the theory of homeostasis in input-output networks, which in turn open up new avenues for applications. 

The first modification concerns homeostasis in Gene Regulatory Networks (GRN).  
See Antoneli~\etal~\cite{AGS18, GRN} and Golubitsky and Stewart~\cite{GS23}.  
The important difference between a GRN and an input-output network is the assumption that each node (a `gene') in a GRN consists of a pair of state variables (the protein and mRNA concentrations).  
One way to deal with this `discrepancy' is o consider a subclass of input-output networks that we call PRN (Protein-mRNA Networks).
This notion leads to somewhat different infinitesimal homeostasis types and patterns. The general theory of homeostasis types and patterns in PRN is developed in \cite{GRN}.

A second modification considers input-output networks where the input and output nodes are the same node.
Such networks seem to occur frequently in metal ion homeostasis.  
For example, see Chifman~\etal~\cite{chifman2012,chifman2017} for iron homeostasis, Cui and Kaandorp~\cite{cui2006} for calcium homeostasis, and Claus~\etal~\cite{claus2015} for zinc homeostasis.

Finally, there are two generalizations of homeostasis theory that are motivated by codimension arguments in bifurcation theory.  The first is chair homeostasis where the homeostasis in input-output functions is flatter than expected.  See Nijhout~\etal~\cite{NBR14}, Golubitsky and Stewart~\cite{GS18}, and Reed~\etal~\cite{RBGSN17}. The second is mode interaction where two infinitesimal homeostasis types occur at the same equilibrium. See Duncan~\etal~\cite{DUN23}.  Interestingly, the simultaneous appearance of different homeostasis types leads to bifurcation in the family of equilibria that generates the homeostasis.  An example of this phenomenon is discussed in Duncan and Golubitsky~\cite{DG19}.  A related biochemical example of multiple types of infinitesimal homeostasis occurring on variation of just one parameter is found in Reed~\etal~\cite{RBGSN17}.

An important research direction to pursue is the application of infinitesimal homeostasis to biological problems.
By this we mean the study of an input-output network associated to a mathematical model for a biological phenomenon.
In this context, an application of infinitesimal homeostasis is more than just the computation of the homeostasis types and homeostasis patterns, as we have done with the network from Figure \ref{F:example8}.
In fact, one can produce dozens of examples by going through the $4$-node input-output networks classified in Huang and Golubitsky \cite{HG22}.
In a biological application the purpose is to understand how a biological mechanism gives rise to biological behavior.
This requires deep understanding of the underlying biology.

There are two promising examples that are currently under investigation.
The first is a model of intracellular iron regulation, adapted from Chifman \etal~\cite{chifman2017}, 
that takes into account free iron ions in the cytosol and in the mitochondria \cite{ABGJ2024}.
In this example the input-output network has eight nodes and two homeostasis patterns.
The second is a model of intracellular cholesterol regulation proposed by Pool \etal~\cite{PST2018} 
after model reduction by quasi-steady state approximation \cite{ABGJ2024}.
In this example the input-output network has $12$ nodes and we find $4$ homeostasis types / patterns.

Let us give some details of the analysis of the intracellular cholesterol regulation.
Starting with the reduced non-dimensional system of Pool \etal~\cite[eqs. 36-48]{PST2018} we write down the `generic admissible system', that is, the most general system of ODEs that have the same state variables on the right handed side as the original system.
From the generic admissible system it is easy to obtain the input-output network $\GG$ -- including the input parameter, the input node and the output node, see Figure \ref{FIG:cholesterol-network}.
The names of the nodes are the state variables defined in Pool \etal~\cite{PST2018} and they reflect the biology of the intracellular cholesterol signaling network.
The next step is to construct the homeostasis pattern network $\PP$ from the input-output network $\GG$, see Figure \ref{FIG:Homeostasis_Pattern_Network}.
Note that $\GG$ has four homeostasis subnetworks.
Finally, using this paper, we can compute the homeostasis patterns on $\PP$, see Figure \ref{FIG:homeostasis_patterns_cholesterol}.
In Table \ref{TAB:homeostasis_patterns_cholesterol} we list the nodes in $\GG$ that are simultaneously homeostatic in each homeostasis pattern. 
In the terminology of Golubitsky and Stewart~\cite{GS23} this is the `model independent' part of the analysis.

\begin{figure}[!htb]
\centering
\includegraphics[scale=0.5]{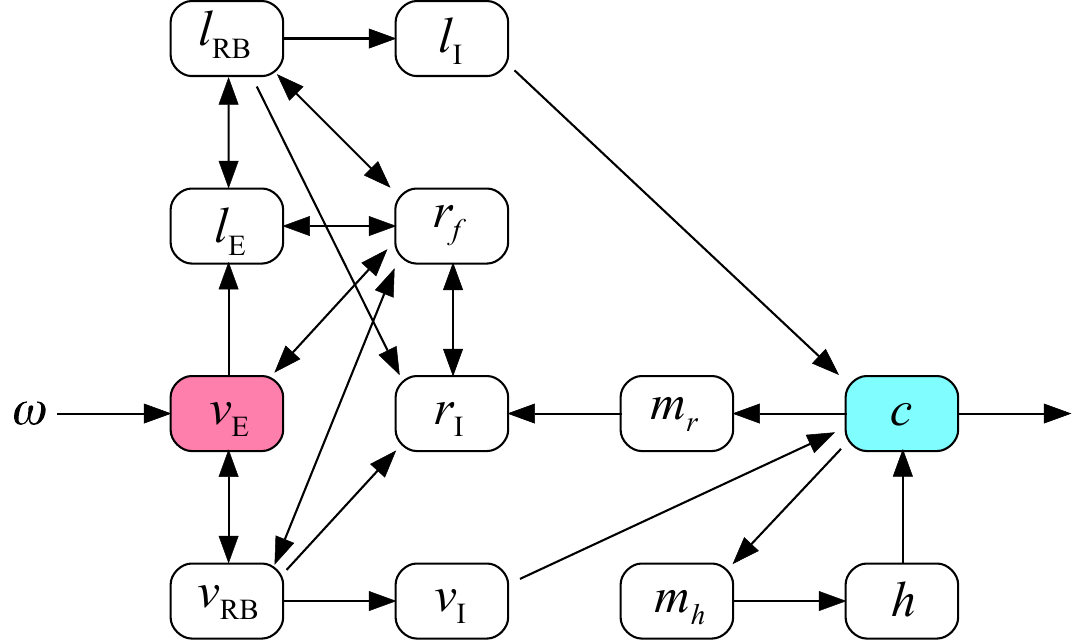}
\caption{Input-output network $\GG$ associated to the intracellular cholesterol regulation model from Pool \etal~\cite{PST2018}.
The input parameter $\omega$ affects the input node $v_E$. The output node is $c$.
\label{FIG:cholesterol-network}}
\end{figure}

\begin{figure}[!htb]
\centering
\begin{tikzpicture}
\node[]                               (0) {Input};
\node[state,right=of 0]               (1) {$\VE$};
\node[state,right=of 1]               (2) {$\widetilde{\mathcal{L}}_1$};
\node[state,right=of 2]               (3) {$\C$};
\node[right=of 3]                     (4) {Output};
\node[state,below=of 3]               (5) {$\widetilde{\mathcal{A}}_1$};
\node[state,right=of 5]               (6) {$\widetilde{\mathcal{A}}_2$};
\node[state,left=of 5]                (7) {$\widetilde{\mathcal{A}}_3$};

\draw[
    >=latex,
    auto=right,                      
    loop above/.style={out=75,in=105,loop},
    every loop,
    ]
    (0) edge node {} (1)
    (1) edge node {} (2)
    (2) edge node {} (3)
    (3) edge node {} (4)
    (3) edge node {} (5)
    (5) edge node {} (6)
    (6) edge node {} (3)
    (5) edge node {} (3)
    (3) edge node {} (6)
    (3) edge node {} (7)
    (7) edge node {} (2);
\end{tikzpicture}
\caption{Homeostasis pattern network $\PP$ associated to the $12$-node input-output network $\GG$ from Figure \ref{FIG:cholesterol-network}. The only structural pattern subnetwork is $\widetilde{\mathcal{L}}_1 = \{ \MR, \IE, \IRB, \Ii, \VRB, \VI, \RF, \RI \}$. 
The appendage pattern subnetworks are
$\widetilde{\mathcal{A}}_1 = \{ \MH \}$, $\widetilde{\mathcal{A}}_2 = \{ \h \}$, $\widetilde{\mathcal{A}}_3 = \{ \MR \}$.
}
\label{FIG:Homeostasis_Pattern_Network}
\end{figure}

\begin{table}[!htb]
\begin{center}
\begin{tabular}{|c|c|c|c|}
\hline
Homeostasis Type & Homeostasis Pattern &  Figure~\ref{FIG:homeostasis_patterns_cholesterol} \\
\hline
$\mathcal{L}_1$ & $\{ \C, \MH, \h, \MR \}$  &(a)  \\
$\mathcal{A}_1$ & $\{ \C, \MR \}$ & (b)\\
$\mathcal{A}_2$ & $\{ \C, \MH, \MR \}$ & (c) \\
$\mathcal{A}_3$ & $\{ \C, \MH, \h \}$ & (d) \\
\hline
\end{tabular}
\end{center}
\caption{Infinitesimal homeostasis patterns and their corresponding homeostsis subnetworks.}
\label{TAB:homeostasis_patterns_cholesterol}
\label{table1} 
\end{table}

\begin{figure}[!htb]
\begin{subfigure}[c]{0.5\textwidth}
\centering
\begin{tikzpicture}
\node[state,right=of 0]               (1) {$\VE$};
\node[state,fill=red,right=of 1]               (2) {$\mathcal{L}_1$};
\node[state,fill=cyan,right=of 2]               (3) {$\C$};
\node[state,fill=cyan,below=of 3]               (5) {$\mathcal{A}_1$};
\node[state,fill=cyan,right=of 5]                (6) {$\mathcal{A}_2$};
\node[state,fill=cyan,left=of 5]                (7) {$\mathcal{A}_3$};

\draw[
    >=latex,
    auto=right,                      
    loop above/.style={out=75,in=105,loop},
    every loop,
    ]
    (1) edge node {} (2)
    (2) edge node {} (3)
    (3) edge node {} (5)
    (5) edge node {} (6)
    (6) edge node {} (3)
    (5) edge node {} (3)
    (3) edge node {} (6)
    (3) edge node {} (7)
    (7) edge node {} (2);
\end{tikzpicture}
\caption{Pattern from $\mathcal{L}_1$}
\medskip\medskip
\end{subfigure}
\begin{subfigure}[c]{0.5\textwidth}
\centering
\begin{tikzpicture}
\node[state,right=of 0]               (1) {$\VE$};
\node[state,right=of 1]               (2) {$\mathcal{L}_1$};
\node[state,fill=cyan,right=of 2]               (3) {$\C$};
\node[state,fill=red,below=of 3]               (5) {$\mathcal{A}_1$};
\node[state,right=of 5]                (6) {$\mathcal{A}_2$};
\node[state,fill=cyan,left=of 5]                (7) {$\mathcal{A}_3$};

\draw[
    >=latex,
    auto=right,                      
    loop above/.style={out=75,in=105,loop},
    every loop,
    ]
    (1) edge node {} (2)
    (2) edge node {} (3)
    (3) edge node {} (5)
    (5) edge node {} (6)
    (6) edge node {} (3)
    (5) edge node {} (3)
    (3) edge node {} (6)
    (3) edge node {} (7)
    (7) edge node {} (2);
\end{tikzpicture}
\caption{Pattern from $\mathcal{A}_1$}
\medskip\medskip
\end{subfigure}
\begin{subfigure}[c]{0.5\textwidth}
\centering
\begin{tikzpicture}
\node[state,right=of 0]               (1) {$\VE$};
\node[state,right=of 1]               (2) {$\mathcal{L}_1$};
\node[state,fill=cyan,right=of 2]               (3) {$\C$};
\node[state,fill=cyan,below=of 3]               (5) {$\mathcal{A}_1$};
\node[state,fill=red,right=of 5]                (6) {$\mathcal{A}_2$};
\node[state,fill=cyan,left=of 5]                (7) {$\mathcal{A}_3$};

\draw[
    >=latex,
    auto=right,                      
    loop above/.style={out=75,in=105,loop},
    every loop,
    ]
    (1) edge node {} (2)
    (2) edge node {} (3)
    (3) edge node {} (5)
    (5) edge node {} (6)
    (6) edge node {} (3)
    (5) edge node {} (3)
    (3) edge node {} (6)
    (3) edge node {} (7)
    (7) edge node {} (2);
\end{tikzpicture}
\caption{Pattern from $\mathcal{A}_2$}
\medskip\medskip
\end{subfigure}
\begin{subfigure}[c]{0.5\textwidth}
\centering
\begin{tikzpicture}
\node[state,right=of 0]               (1) {$\VE$};
\node[state,right=of 1]               (2) {$\mathcal{L}_1$};
\node[state,fill=cyan,right=of 2]               (3) {$\C$};
\node[state,fill=cyan,below=of 3]               (5) {$\mathcal{A}_1$};
\node[state,fill=cyan,right=of 5]                (6) {$\mathcal{A}_2$};
\node[state,fill=red,left=of 5]                (7) {$\mathcal{A}_3$};

\draw[
    >=latex,
    auto=right,                      
    loop above/.style={out=75,in=105,loop},
    every loop,
    ]
    (1) edge node {} (2)
    (2) edge node {} (3)
    (3) edge node {} (5)
    (5) edge node {} (6)
    (6) edge node {} (3)
    (5) edge node {} (3)
    (3) edge node {} (6)
    (3) edge node {} (7)
(7) edge node {} (2);
\end{tikzpicture}
\caption{Pattern from $\mathcal{A}_2$}
\medskip\medskip
\end{subfigure}
\caption{Homeostasis patterns on network $\PP$. Homeostatic nodes in blue; triggering node in red.}
\label{FIG:homeostasis_patterns_cholesterol}
\end{figure}

Now comes the most interesting and challenging step where one has to resort to the original biological model and the underlying biology to determine which homeostasis patterns occur in the particular model and which are biologically significant.
The development of these ideas would depart from the subject matter of this paper and is deferred to a future publication \cite{ABGJIT2024}.

\paragraph{Acknowledgments.}
We thank Marcus Tindall for helpful discussions.
WD was supported by National Institutes of Health (NIH) award R01 GM126555-01.
FA was supported by Funda\c{c}\~ao de Amparo \`a Pes\-qui\-sa do Estado de S\~ao Paulo (FAPESP) grants 2019/12247-7 and 2019/21181-0.

\end{document}